\newtheorem{Thm}{Theorem}[section]
\newtheorem{Lem}[Thm]{Lemma}
\newtheorem{Def}[Thm]{Definition}
\newtheorem{Cor}[Thm]{Corollary}
\newtheorem{Prop}[Thm]{Proposition}
\newtheorem{Ex1}[Thm]{Example}
\newtheorem{Rem1}[Thm]{Remark}
\newtheorem{Prob}[Thm]{Problem}
\newenvironment{Rem}{\begin{Rem1}\rm}{\end{Rem1}}
\newenvironment{Ex}{\begin{Ex1}\rm}{\end{Ex1}}
\title[Simple-minded systems]{Simple-minded systems, configurations and
mutations for representation-finite self-injective algebras}
\author{Aaron Chan, Steffen Koenig, and Yuming Liu$^*$}
\address{Aaron Chan
\newline Institute of Mathematics
\newline University of Aberdeen
\newline Aberdeen, AB24 3UE
\newline UK}
\email{aaron.kychan@gmail.com}
\address{Steffen Koenig
\newline Institute of
algebra and number theory
\newline University of Stuttgart
\newline Pfaffenwaldring 57
\newline 70569 Stuttgart
\newline Germany}
\email{skoenig@mathematik.uni-stuttgart.de}
\address{Yuming Liu
\newline School of Mathematical Sciences
\newline Laboratory of Mathematics and Complex Systems
\newline Beijing Normal University
\newline Beijing 100875
\newline P.R.China}
\email{ymliu@bnu.edu.cn}
\date{version of \today}
\newenvironment{Proof}[1][Proof]{\begin{trivlist}
\item[\hskip \labelsep {\bfseries #1}]}{\flushright
$\Box$\end{trivlist}}
\newcommand{\lra}{\longrightarrow}
\newcommand{\ra}{\rightarrow}
\newcommand{\sdp}{\times\kern-.2em\vrule height1.1ex depth-.05ex}
\newcommand{\epi}{\lra \kern-.8em\ra}
\newcommand{\Z}{{\mathbb Z}}
\newcommand{\stmod}{\underline{\mathrm{mod}}}
\newcommand{\al}{\alpha}
\newcommand{\be}{\beta}
\thanks{The authors would like to thank Alex Dugas and Dong Yang for
enlightening discussions, and Hideto Asashiba for
kindly sending us the source code of the representative quiver of the
RFS algebra of type $(D_{3m},s/3,1)$. The first named author is
supported by EPSRC Doctoral Training Scheme. The third named author
has been supported by a research fellowship from Alexander von Humboldt
Foundation and partially by NCET Program from MOE of China.}
\begin{document}
\renewcommand{\thefootnote}{\alph{footnote}}
\setcounter{footnote}{-1} \footnote{ $^*$ Corresponding author.}
\renewcommand{\thefootnote}{\alph{footnote}}
\setcounter{footnote}{-1} \footnote{\it{Mathematics Subject
Classification(2010)}: 16G10, 18Axx, 13D09.}
\renewcommand{\thefootnote}{\alph{footnote}}
\setcounter{footnote}{-1} \footnote{ \it{Keywords}: (weakly)
simple-minded system, (combinatorial) configuration,
(Nakayama-stable) simple-minded-collection, (irreducible) mutation,
(stronger) transitivity problem.}

\begin{abstract}
Simple-minded systems of objects in a stable module category are
defined by common properties with the set of simple modules, whose
images under stable equivalences do form simple-minded systems. Over
a representation-finite self-injective algebra, it is shown that all
simple-minded systems are images of simple modules under stable
equivalences of Morita type, and that all simple-minded systems can
be lifted to Nakayama-stable simple-minded
collections in the derived category. In particular, all simple-minded
systems can be obtained algorithmically using mutations.
\end{abstract}

\maketitle

\section{Introduction}
Module categories contain two kinds of especially important objects:
From {\it simple} modules other objects can be produced by
iteratively forming extensions. From {\it projective} modules other
objects can be produced by considering presentations or resolutions.
Moreover, by Morita theory, projective objects control equivalences
of module categories. The role of projective modules can in derived
categories be taken over by appropriate generalisations
(``projective-minded'' objects satisfying certain homological
conditions) such as tilting complexes, which still control
equivalences of such categories. In stable categories, no
substitutes of projective objects are known and stable equivalences
are, in general, not known to be controlled by particular objects.
It is not even known whether equivalences of stable module
categories of finite dimensional algebras preserve the number of
non-projective simple modules (up to isomorphism); the Auslander-Reiten conjecture -- which appears to be wide open -- predicts a
positive answer to this question.

The images of simple modules under a stable equivalence do keep some
of the properties of simple objects such as their endomorphism ring
being a skew-field and every non-zero homomorphism between them
being an isomorphism. Moreover, they still generate the stable
category and there is no cohomology between them in negative
degrees. Such systems of objects in a stable module category have
been called  simple-minded systems in \cite{KL}. Analogous systems
of objects in a derived module category (defined in a slightly
different way) have been called cohomologically Schurian collections
in \cite{Al-Nofayee2007} and simple-minded collections in \cite{KY}.

Any information on simple-minded systems for an algebra can help to
describe the still rather mysterious stable module category and in
particular equivalences between stable categories. The following two
problems appear to be crucial:

\smallskip
{\it The simple-image problem:} Is every simple-minded system the
image of the set of simples of some algebra under some stable
equivalence?

\smallskip
{\it The liftability problem:} Is there a connection between the
simple-minded systems in the stable category of a self-injective
algebra and the simple-minded collections in its derived module
category? More precisely, are the simple-minded systems images of
simple-minded collections under the quotient functor from the
derived to the stable category?

Note that when the algebra is self-injective, its stable module
category is a quotient of its derived module category.

On a numerical level, a positive answer to the question if all
simple-minded systems of an algebra have the same cardinality implies validity of the
Auslander-Reiten conjecture. The information we are looking for is
stronger and is part of an attempt to better understand the
structure of stable categories and stable equivalences.
\smallskip

Expecting positive answers to these questions appears to be rather
optimistic. In this article we do, however, provide positive answers
to both problems for the class of representation-finite
self-injective algebras, which includes for instance all the blocks
of cyclic defect of group algebras of finite groups over fields of
arbitrary characteristic.

\smallskip
{\bf Theorem A (\ref{orbits-sms} and \ref{sms-smc}):} {\it Let $A$
be self-injective. Then there are injective maps
\\
\[
\begin{array}{c}
\xymatrix@1@-10pt{
StM Alg(A)  \ar@{^{(}->}[dr]  & &   \ar@{^{(}->}[dl] smc(A) / DPic(A) \\
& sms(A)/StPic(A) & }
\end{array}
\]
\\
which send an algebra, which is stably equivalent of Morita type
to $A$, to a simple-minded system, and a
{Nakayama-stable} simple-minded collection also to a
simple-minded system. The left hand map is a bijection if and only
if every simple-minded system is the image of simples under a stable
equivalence of Morita type. The right hand map is a bijection if and
only if every simple-minded system is the image of simples under a
stable equivalence that lifts to a derived equivalence.}
\medskip

For representation-finite self-injective algebras, the criteria are
satisfied. This is the main result of this article.
\smallskip

{\bf Theorem B (\ref{stronger-transsms} and \ref{RFS-sms-smc-bijec}:)} {\it
Let $A$ be self-injective of finite representation type over an
algebraically closed field. Then the two maps in Theorem A are
bijections.}
\medskip

\medskip

A main tool for proving Theorem B is a combinatorial description of
simple-minded systems:
\smallskip
{\bf Theorem C (\ref{sms-configuration}):} {\it Let $A$ be
self-injective of finite representation type over an algebraically
closed field. Then there is a bijection between simple-minded
systems and Riedtmann's configurations.}

\medskip
{Note that all stable equivalences in this situation
turn out to be liftable stable equivalences of Morita type, that is, they can
be lifted to standard derived equivalences.} Analysing this situation
in detail also yields an unexpected property of
simple-minded systems in this case; they are all Nakayama-stable.
This stability appears to be a crucial property that is potentially
useful in other situations, too. Adopting this point of view allows
us to simplify a proof of Dugas \cite{Dugas2011} on the
liftability of stable equivalences between particular
representation-finite self-injective algebras.

Simple-minded systems may be compared with other concepts that arise
for instance in cluster theory or in the emerging generalisation of
tilting to silting. These concepts also come with a theory of mutation.
Therefore, it makes sense to ask for the phenomena which replicate in different
situations. In this context, we will prove the following result, that is
formally independent of simple-minded systems, but intrinsically
related to our approach:

\smallskip
{\bf Theorem D (\ref{RFS-tilting-connected}):} {\it Let $A$ be a
self-injective algebra of finite representation type over an algebraically
closed field. Then the homotopy
catgory $K^b(proj A)$  is strongly tilting connected.}
\medskip

Combining this with other results, we show an analogous result
for the stable module category. In particular, we get
that all simple-minded systems in this case can be obtained by
iterative left irreducible mutations starting from simple modules
(see \ref{mutation-transitive-sms}).
\medskip

The proofs use a variety of rather strong results and methods from
the literature, including covering theory, Riedtmann's description
of configurations of representation finite self-injective algebras,
Asashiba's classification results on stable and derived
equivalences, Asashiba's and Dugas' results on liftability of stable
equivalences, and various mutation theories.
\medskip

This article is organised as follows. Section 2 contains some
general statements on sms's over self-injective algebras: their
connection with smc's; the relationship between the orbits of sms's
under stable Picard group and the Morita equivalence classes of
stably equivalent algebras. We shall formulate the basic problems
about sms: simple-image problem and liftability of simple-image sms
problem. The main result of this section is Theorem A, which is
valid for self-injective algebras in general. It determines the point
of view taken in this article.

From Section 3, we restrict our discussion to representation-finite
self-injective algebras over an algebraically closed field. Section
3 gives the correspondence between configurations and sms's, that
is, Theorem C. We deal with both the standard and the non-standard
case. As a consequence, we can solve the simple-image problem of
sms's for representation-finite self-injective algebras.

Section 4 gives the correspondence between orbits of sms's of the
stable category and orbits of Nakayama-stable smc's of the bounded
derived category, that is, one assertion in Theorem B. This is based
on a lifting theorem for stable equivalences between
representation-finite self-injective algebras. This lifting theorem
also allows us to give positive answer to a stronger form of
simple-image problem, which completes Theorem B.

In Section 5 we discuss some aspects of the various mutations of
different objects: tilting complex, smc, and sms. We will show that
the sms's of a representation-finite self-injective algebra can be
obtained by iterative mutations. As a by-product of our point of
view we obtain Theorem D.
\bigskip

\section{Statement of problems, and their motivations}
Let $k$ be a field and $A$ a finite dimensional self-injective $k$-algebra.
\medskip

We denote by mod$A$ the category of all finitely generated left
$A$-modules, by mod$_\mathcal{P}A$ the full subcategory of mod$A$
whose objects have no nonzero projective direct summand, and by
$\stmod A$ the stable category of mod$A$ modulo projective modules.
Let $\mathcal{S}$ be a class of A-modules. The full subcategory
$\langle\mathcal{S}\rangle$ of mod$A$ is the additive closure of
$\mathcal{S}$. Denote by
$\langle\mathcal{S}\rangle\ast\langle\mathcal{S'}\rangle$ the class
of indecomposable $A$-modules $Y$ such that there is a short exact
sequence $0 \rightarrow X \rightarrow Y\oplus P \rightarrow Z\rightarrow 0$ with $X\in \langle\mathcal{S}\rangle, Z\in\langle\mathcal{S'}\rangle$,
and $P$ projective. Define $\langle\mathcal{S}\rangle_1:=\langle\mathcal{S}\rangle$ and
$\langle\mathcal{S}\rangle_n:=\langle\mathcal{S}\rangle_{n-1}\ast\langle\mathcal{S}\rangle$ for $n>1$.

To study sms's over $A$, without loss of generality, we may assume
the following throughout the article: $A$ is indecomposable
non-simple and contains no nodes (see \cite{KL}). We can then
simplify the definition of sms from \cite{KL} as follows.

\begin{Def}\label{sms} {\rm(\cite{KL})}
Let $A$ be as above.  A class of objects $\mathcal{S}$ in
mod$_\mathcal{P}A$ is called a simple-minded system (sms) over $A$
if the following conditions are satisfied:
\begin{enumerate}
\item (orthogonality condition) For any $S,T\in\mathcal{S}$,
$\underline{Hom}_A(S,T)= \left\{\begin{array}{ll} 0 & (S\neq T), \\
\text{division ring} & (S=T).\end{array}\right.$
\item (generating condition) For each indecomposable non-projective
$A$-module $X$, there exists some natural number $n$ (depending on
$X$) such that $X\in\langle\mathcal{S}\rangle_n$.
\end{enumerate}
\end{Def}

It has been shown in \cite{KL} that each sms {has} finite
cardinality and the sms's are invariant under stable equivalence,
{i.e. the image of an sms under a stable equivalence is also an
sms.}  Note that the set of simple $A$-modules clearly forms an sms.
We are going to present two fundamental problems, as noted in the
introduction, on the study of sms, and we provide motivations for
them. The first one is the \textit{simple-image problem}:
\begin{Prob}\label{simple-image-problem}
Simple-image problem:
\begin{enumerate}
\item Given an sms $\mathcal{S}$ of $A$, is this the image of the
simple modules under a stable equivalence?  (When this is true, we
say $\mathcal{S}$ is a simple-image sms, or shorter, it is simple-image.)
\item Is every sms of $A$ simple-image?
\end{enumerate}
\end{Prob}
For some technical reasons, we will usually consider a stronger
version of this problem, where we replace stable equivalence by
\textit{stable equivalence of Morita type} (see the definition
below).  When $\mathcal{S}$ is the image of simple modules under a
stable equivalence of Morita type, we say $\mathcal{S}$ is a
\textit{simple-image sms of Morita type}.  The strong version of (2)
is ``Is every sms of $A$ a simple-image of Morita type?".  Our aim
is to solve the strong simple-image problem in the case of
representation-finite self-injective algebras over
algebraically closed fields.

In \cite{KL}, a weaker version of sms has been introduced, and it
has been shown that when $A$ is representation-finite
self-injective, the following system is sufficient (hence
equivalent) for defining an sms.
\begin{Def}
\label{wsms} {\rm(\cite{KL})} Let $A$ be as in Definition \ref{sms}.
A class of objects $\mathcal{S}$ in mod$_\mathcal{P}A$ is called a
weakly simple-minded system (wsms) if the following two conditions
are satisfied:
\begin{enumerate}
\item (orthogonality condition) For any $S,T\in\mathcal{S}$,
$\underline{Hom}_A(S,T)= \left\{\begin{array}{ll} 0 & (S\neq T), \\
\text{division ring} & (S=T).\end{array}\right.$
\item (weak generating condition) For any indecomposable non-projective
$A$-module $X$, there exists some $S\in \mathcal{S}$ (depends on
$X$) such that $\underline{Hom}_A(X,S)\neq 0.$
\end{enumerate}
\end{Def}

A similar concept used for derived module categories is the
simple-minded collection (smc) of \cite{KY}, which coincides with
the cohomologically Schurian collection of Al-Nofayee
\cite{Al-Nofayee2007}.
\begin{Def}\label{smc} {\rm(\cite{KY})}
A collection $X_1,\cdots,X_r$ of objects in a triangulated category
$\mathcal{T}$ is simple-minded if for $i,j=1,\cdots,r$, the
following conditions are satisfied:
\begin{enumerate}
\item (orthogonality) $Hom(X_i, X_j) = \begin{cases} \text{division ring} & \mbox{if }i=j,\\ 0 &\mbox{otherwise};\end{cases}$
\item (generating) $\mathcal{T} = thick(X_1\oplus\cdots\oplus X_r)$;
\item (silting/tilting) $Hom(X_i,X_j[m])=0$ for any $m<0$.
\end{enumerate}
\end{Def}
For any (finite dimensional) $k$-algebra $A$, the simple $A$-modules
form a simple-minded collection of the bounded derived category
$D^b$(mod$A$). Simple-minded collections appeared already in the
work of Rickard \cite{Rickard2002}, who constructed tilting
complexes inducing equivalences of derived categories that send a
simple-minded collection for a symmetric algebra to the simple
modules of another symmetric algebra. Al-Nofayee
\cite{Al-Nofayee2007} generalised Rickard's work to self-injective
algebras, requiring an smc to satisfy the following
Nakayama-stability condition. Recall that for a self-injective
algebra $A$, the Nakayama functor $\nu_A=Hom_k(A,k)\otimes_A-:$
mod$A\rightarrow$ mod$A$ is an exact self-equivalence and therefore
induces a self-equivalence of $D^b$(mod$A$) which will also be
denoted by $\nu_A$. By Rickard \cite{Rickard1991}, if $\phi:D^b$(mod$A)\rightarrow D^b$(mod$B$) is a derived equivalence between
two self-injective algebras $A$ and $B$, then $\phi\nu_A(X)\simeq\nu_B\phi(X)$ for any object $X\in D^b$(mod$A$). We shall say an smc
$X_1,\cdots,X_r$ of $D^b$(mod$A$) is {\it Nakayama-stable} if the
Nakayama functor $\nu_A$ permutes $X_1,\cdots,X_r$. In particular, any derived equivalence $\phi:
D^b$(mod$A)\rightarrow D^b$(mod$B$) sends simple modules to a Nakayama-stable smc.
\medskip

Let $A$ and $B$ be two algebras. Following Brou\'{e}
\cite{Broue1994}, we say that there is a {\it stable equivalence of
Morita type} (StM) $\phi: \stmod A\rightarrow \stmod B$  if there
are two left-right projective bimodules $_AM_B$ and $_BN_A$ such
that the following two conditions are satisfied:
\begin{enumerate}
\item ${}_A M\otimes_B N_A\simeq {}_AA_A\oplus {}_AP_A,\ \ \ {}_B N\otimes_A M_B\simeq {}_BB_B\oplus {}_BQ_B,$ \\ where ${}_AP_A$ and
${}_BQ_B$ are some projective bimodules;
\item $\phi$ is a stable equivalence which lifts to the functor $N\otimes_{A}-$, that is, the diagram
$$\xymatrix{modA\ar[r]^{N\otimes_{A}-}\ar[d]_{\pi_A}& modB\ar[d]^{\pi_B}\\
\stmod A\ar[r]^{\phi}& \stmod B}$$
commutes up to natural isomorphism, where $\pi_A$ and $\pi_B$ are
the natural quotient functors.
\end{enumerate}
This special class of stable equivalences occurs frequently in
representation theory of finite groups, and more generally, in
representation theory of finite dimensional algebras (see, for
example, \cite{Broue1994}, \cite{Rickard1991},
\cite{Linckelmann1996}, \cite{Linckelmann1998}, \cite{LX}). We will
frequently use the following two well-known results of Rickard and
Linckelmann.  The former says that for a self-injective $A$, the
embedding functor mod$A\rightarrow D^b$(mod$A$) induces an
equivalence $\stmod A\rightarrow D^b$(mod$A$)/$K^b($proj$A)$. So
there is a natural quotient functor $\eta_A:D^b$(mod$A)\rightarrow\stmod A$ of triangulated categories. A standard derived equivalence
between two self-injective algebras induces a StM (here a {\it
standard derived equivalence} means that it is isomorphic to the
functor given by tensoring with a two-sided tilting complex, see
\cite{Rickard1989,Rickard1991,Asashiba2003} for more details).
Linckelmann \cite{Linckelmann1996} showed that a StM between two
self-injective algebras lifts to a Morita equivalence if and only if
it sends simple modules to simple modules.

The second fundamental problem asks how a simple-image sms of Morita
type is related to Nakayama-stable smc:
\begin{Prob}\label{liftable-sms}
The liftability problem for simple-image sms's of Morita type is:
Given a simple-image sms $\mathcal{S}$ of Morita type under StM,
$\phi:\stmod B\to \stmod A$ with $B$ self-injective, can $\phi$ be
lifted to a derived equivalence?
\end{Prob}

Given a simple-image sms $\mathcal{S}$ under a liftable StM $\phi$
as above, we simply say $\mathcal{S}$ is a \textit{liftable simple-image} sms.  We will justify our terminology in Proposition \ref{orbits-sms}.

Next we recall the notion of stable Picard group from
\cite{Linckelmann1998,Asashiba2003}. Let $A$ be an algebra. The
{more conventional notion of} {\it Picard group} Pic$(A)$ of $A$ is
defined to be the set of natural isomorphism classes of Morita
self-equivalences over $A$. The set StPic$(A)$ of natural
isomorphism classes $[\phi]$ of StM $\phi: \stmod A\rightarrow\stmod A$ form a group under the composition of functors, which is
called the {\it stable Picard group} of $A$. Notice that the
definitions for stable Picard group used by Linckelmann
\cite{Linckelmann1998} and by Asashiba \cite{Asashiba2003} are
different even in the case of representation-finite self-injective
algebras. Linckelmann used the isomorphism classes of bimodules
which define StM, while Asashiba used the isomorphism classes of
\textit{all} stable self-equivalences. {We use {the one closer to}
Linckelmann's version of stable Picard group in the propositions to
follow. In Section 4 we will specify the link between the two
versions when $A$ is representation-finite.} Similarly we define the
{\it derived Picard group} DPic$(A)$ of $A$ as the set of natural
isomorphism classes of standard derived self-equivalences of the
bounded derived category $D^b$(mod$A$). Clearly each Morita
equivalence: mod$A\rightarrow$ mod$A$ induces a StM: $\stmod A\rightarrow \stmod A$. We denote the image of the canonical
homomorphism Pic$(A)\rightarrow$ StPic$(A)$ by Pic$'(A)$.  Note that two
non-isomorphic bimodules may induce isomorphic StM, which is the
reason why we use Pic$'(A$) here.  This
distinction will become important in Section 4.

Let $A$ be an algebra. In the following, we will identify two sms's
of $A$, $\mathcal{S}_1=\{X_1,\cdots,X_r\}$ and
$\mathcal{S}_2=\{X_1',\cdots,X_s'\}$, if $r=s$ and $X_i\simeq X_i'$
for all $1\leq i\leq r$ up to a permutation.  We use the same
convention for smc's.  {We use calligraphic font (e.g.
$\mathcal{S}$) and bold font (e.g. $\mathbf{S}$) for sms's and smc's
respectively to distinguish the two.}  Now we fix some notations:

$\mathcal{S}_A=\{\mbox{isomorphism classes of simple}A\mbox{-modules}\}$;

$\mathrm{StMAlg}(A)=\{\mbox{the Morita equivalence classes of algebras which are StM to } A\};$

$\mathrm{sms}(A)/\mathrm{StPic}(A)=\{\mbox{the orbits of sms's of
}\stmod A\mbox{ under StPic}(A)\};$

$\mathrm{smc}(A)/\mathrm{DPic}(A)=\{\mbox{the orbits of Nakayama-stable smc's of }D^b(\mathrm{mod}A)\mbox{ under
DPic}(A)\}.$

\begin{Prop}\label{unique-algebra}
Let $A$ be a self-injective algebra.  Strong simple-image problem
(see \ref{simple-image-problem}) has a positive answer for $A$, if
and only if, every sms $\mathcal{S}$ of $A$ is simple-image under a
StM $\phi:\stmod B\to \stmod A$, where the algebra $B$ is uniquely
determined by $\mathcal{S}$, up to Morita equivalence.
\end{Prop}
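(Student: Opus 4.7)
The plan is to prove the biconditional in two asymmetric halves: the ``$\Leftarrow$'' direction is essentially a matter of unpacking definitions, while the substance lies in the ``$\Rightarrow$'' direction, which is a uniqueness statement that reduces to Linckelmann's criterion recalled just above the proposition.

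For the reverse direction, I would observe that the right-hand condition already asserts, as part of its formulation, that every sms of $A$ is simple-image under some StM; dropping the uniqueness clause of $B$ yields precisely the positive answer to the strong simple-image problem. So this direction requires no further argument.

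For the forward direction, assume the strong simple-image problem has a positive answer, fix an sms $\mathcal{S}$ of $A$, and suppose we have two StMs $\phi_i : \stmod B_i \to \stmod A$ (for $i=1,2$) with $B_1, B_2$ self-injective, such that $\phi_i(\mathcal{S}_{B_i}) = \mathcal{S}$ (up to the convention of identifying sms's up to permutation and isomorphism of entries, as stated just before the proposition). The composition $\psi := \phi_2^{-1} \circ \phi_1 : \stmod B_1 \to \stmod B_2$ is again a stable equivalence of Morita type: a StM is invertible as a StM because its defining bimodule $M$ has $N$ as a two-sided inverse up to projective summands, and composition of StMs is given by tensor product of the defining bimodules, so the class of StMs is closed under both composition and inverses. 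By construction, $\psi$ sends the set $\mathcal{S}_{B_1}$ of simple $B_1$-modules to the set $\mathcal{S}_{B_2}$ of simple $B_2$-modules.

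At this point I would invoke Linckelmann's theorem (quoted in the excerpt just after the definition of StM): a StM between self-injective algebras lifts to a Morita equivalence if and only if it sends simples to simples. Applied to $\psi$, this yields a Morita equivalence $B_1 \simeq B_2$, giving the required uniqueness of $B$ up to Morita equivalence and completing the forward direction. The only point that needs mild care is the matching of simples: the identification convention for sms's allows a permutation between $\phi_1(\mathcal{S}_{B_1})$ and $\phi_2(\mathcal{S}_{B_2})$, but since Linckelmann's criterion concerns the set of simples rather than a labelled tuple, this does not affect the applicability of his theorem. I do not expect a genuine obstacle in either direction; the proposition is really a clean formal consequence of Linckelmann's lifting criterion together with the group-like structure of stable equivalences of Morita type.
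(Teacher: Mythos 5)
Your proof is correct and follows essentially the same route as the paper: the reverse direction is immediate from the formulation, and the forward direction composes one StM with the inverse of the other to get a StM sending simples to simples, then applies Linckelmann's theorem to conclude $B_1$ and $B_2$ are Morita equivalent. Your additional remarks (closure of StMs under composition and inverses, and the harmlessness of the permutation convention) are correct elaborations of points the paper leaves implicit.
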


\begin{Proof}
A positive answer to the strong simple-image problem asserts that
every sms $\mathcal{S}$ of $A$ is of the form $\phi(\mathcal{S}_B)$,
where $\phi:\stmod B\to \stmod A$ is a StM.  Suppose there is
another stably equivalent algebra $B'$ with StM $\phi ' :\stmod B'\to \stmod A$ and $\phi '(\mathcal{S}_{B'})=\mathcal{S}$,
then $\phi^{-1}\phi '$ sends $\mathcal{S}_{B'}$ to $\mathcal{S}_B$.  By
Linckelmann's Theorem, $\phi^{-1}\phi '$ is then a Morita
equivalence.  The other direction is clear.
\end{Proof}

\begin{Rem}
(1)  This is true for arbitrary finite dimensional algebras when we
replace ``simple $B$-modules" by ``non-projective simple
$B$-modules", due to Linckelmann's theorem being valid for general
finite dimensional algebras (see \cite{Liu2003} and \cite[Section 4]{KY}).

(2)  Uniqueness is false if we relax the right hand side statement
by arbitrary stable equivalence, even in the representation-finite
case. For example, when $A$ is Nakayama algebra with two {simples}
and Loewy length two, then {$\mathcal{S}_A$ clearly is a
simple-image sms of Morita type. However, $\mathcal{S}_A$ is also the image
of the simple $B$-modules}, where
$B = k[x]/(x^2)\times k[y]/(y^2)$, under a $k$-linear (non-triangulated) equivalence of stable module
categories.
\end{Rem}

\begin{Thm} \label{orbits-sms} Let $A$ be a self-injective algebra.
Let $\mathrm{StMAlg}(A)$ and $\mathrm{sms}(A)/\mathrm{StPic}(A)$ be
as above. Then:
\begin{enumerate}
\item There is a well-defined map from $\mathrm{StMAlg}(A)$ to
$\mathrm{sms}(A)/\mathrm{StPic}(A)$.
\item This map is injective.  It is a bijection if and only if every
sms of $A$ is simple-image of Morita type.
\end{enumerate}
\end{Thm}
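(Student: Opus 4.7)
The plan is to define the map $\Phi:\mathrm{StMAlg}(A)\to\mathrm{sms}(A)/\mathrm{StPic}(A)$ by sending the Morita class of $B$ to the $\mathrm{StPic}(A)$-orbit of $\phi(\mathcal{S}_B)$, where $\phi:\stmod B\to\stmod A$ is any chosen StM. Since $\mathcal{S}_B$ is an sms of $B$ and sms's are preserved under stable equivalences by \cite{KL}, the image $\phi(\mathcal{S}_B)$ is indeed an sms of $A$. Two things need to be checked for well-definedness: independence of $\phi$ and independence of the choice of representative $B$ in its Morita class. If $\phi,\phi':\stmod B\to\stmod A$ are two StMs, then $\phi'\phi^{-1}\in\mathrm{StPic}(A)$, so $\phi'(\mathcal{S}_B)=(\phi'\phi^{-1})(\phi(\mathcal{S}_B))$ lies in the same orbit. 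If $B'$ is Morita equivalent to $B$ via $\psi:\mathrm{mod}\,B\to\mathrm{mod}\,B'$, then $\psi$ is itself given by tensoring with a bimodule realising a StM and sends simples to simples, so the induced $\bar\psi:\stmod B\to\stmod B'$ takes $\mathcal{S}_B$ bijectively onto $\mathcal{S}_{B'}$; composing any StM $\phi':\stmod B'\to\stmod A$ with $\bar\psi$ shows the two choices yield the same sms (not merely the same orbit). This settles (1).

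For injectivity in (2), suppose $\Phi([B])=\Phi([B'])$ via chosen StMs $\phi:\stmod B\to\stmod A$ and $\phi':\stmod B'\to\stmod A$, so that there is $\alpha\in\mathrm{StPic}(A)$ with $\alpha(\phi(\mathcal{S}_B))=\phi'(\mathcal{S}_{B'})$. Then $(\phi')^{-1}\alpha\phi:\stmod B\to\stmod B'$ is a StM between self-injective algebras sending $\mathcal{S}_B$ to $\mathcal{S}_{B'}$, i.e.\ simples to simples. Linckelmann's theorem (recalled just before the problem statements) then guarantees that this StM lifts to a Morita equivalence $\mathrm{mod}\,B\to\mathrm{mod}\,B'$, whence $[B]=[B']$ in $\mathrm{StMAlg}(A)$.

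For the bijectivity criterion, surjectivity of $\Phi$ is the statement that every $\mathrm{StPic}(A)$-orbit of sms's contains some $\phi(\mathcal{S}_B)$ with $\phi$ a StM. But if $\mathcal{S}=\beta(\phi(\mathcal{S}_B))$ for some $\beta\in\mathrm{StPic}(A)$, then $\mathcal{S}=(\beta\phi)(\mathcal{S}_B)$ with $\beta\phi$ again a StM, so $\mathcal{S}$ itself is simple-image of Morita type. Thus the property ``simple-image of Morita type'' is preserved by the $\mathrm{StPic}(A)$-action on sms's, and surjectivity of $\Phi$ is equivalent to every sms being simple-image of Morita type.

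The proof is essentially formal once the StM framework is set up, and I do not expect any real obstacle inside this theorem itself; the only non-formal input is Linckelmann's characterisation of StMs that lift to Morita equivalences, which is used precisely once, for injectivity. The substantive difficulty deferred by this theorem is the surjectivity criterion: producing, for an arbitrary sms, an actual algebra and a StM witnessing that it is simple-image of Morita type. This is what the later sections, via Riedtmann's configurations and Asashiba's and Dugas' liftability results, have to supply in the representation-finite case.
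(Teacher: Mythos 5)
Your proposal is correct and follows essentially the same route as the paper: define the map via $[B]\mapsto[\phi(\mathcal{S}_B)]$, check well-definedness using that Morita equivalences induce StMs preserving simples, and use Linckelmann's theorem for injectivity. Your direct argument that surjectivity is equivalent to every sms being simple-image of Morita type (via the observation that this property is stable under the $\mathrm{StPic}(A)$-action) replaces the paper's appeal to its Proposition on uniqueness of the algebra $B$, but the content is the same.
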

\begin{Proof}
(1) For an algebra $B$, denote by $[B]$ the Morita equivalence class
of $B$. Let $[B]\in \mathrm{StMAlg}(A)$ and fix a StM $\phi: \stmod B\rightarrow \stmod A$. Then the image $\phi(\mathcal{S}_B)$ of the
simple $B$-modules is an sms over $A$.  We denote by
$[\phi(\mathcal{S}_B)]\in \mathrm{sms}(A)/\mathrm{StPic}(A)$ the
orbit of $\phi(\mathcal{S}_A)$ under the stable Picard group
StPic$(A)$. Let $B'$ be another algebra with $\sigma:\mathrm{mod}B'\rightarrow \mathrm{mod}B$ a Morita equivalence. Let
$\psi: \stmod B'\rightarrow \stmod A$ be a StM with
$\psi(\mathcal{S}_{B'})$ the image of the simple $B'$-modules. Then
$\psi\sigma\phi^{-1}\in$ StPic$(A)$ maps $\phi(\mathcal{S}_B)$ onto
$\psi(\mathcal{S}_B')$ and therefore
$[\phi(\mathcal{S}_B)]=[\psi(\mathcal{S}_{B'})]$ in
sms$(A)$/StPic($A$), showing the existence of a well-defined map
from $\mathrm{StMAlg}(A)$ to sms$(A)$/StPic($A$).

(2) To show this map is injective, we consider two pairs $(B,\phi)$
and $(B',\phi')$, where $B$ and $B'$ are two algebras such that
$[B],[B']\in \mathrm{StMAlg}(A)$ and where $\phi: \stmod B\rightarrow \stmod A$ and $\phi ': \stmod B'\rightarrow \stmod A$
are two StM. We have two sms's $\phi(\mathcal{S}_B)$ and $\phi'(\mathcal{S}_{B'})$ over $A$.  Suppose that $\phi(\mathcal{S}_B)$
is mapped to $\phi '(\mathcal{S}_{B'})$ by an element $\rho\in$
StPic$(A)$.  Then ${\phi '}^{-1}\rho\phi:\stmod B\rightarrow \stmod B'$ is a StM which maps $\mathcal{S}_B$ to $\mathcal{S}_{B'}$.  By
Linckelmann's theorem, $\phi^{-1}\rho\phi$ is a Morita equivalence,
and so $B$ and $B'$ are Morita equivalent. This finishes the proof
of the injectivity.

Finally, {the previous proposition asserts there is a well-defined
inverse map if and only if every sms of $A$ is simple-image of
Morita type}.
\end{Proof}
\begin{Rem}
(1) We will see in Section 4 that the above map is a bijection in
case that $A$ is a  representation-finite self-injective algebra.

(2) We do not know whether there is an example with a non-bijective
map.  Note that the algebra $A$ in Example 3.5 of \cite{KL} is in
fact not a counterexample to the strong simple-image problem
(despite a misleading formulation in \cite{KL}): there is a  {StM
from $A$ to the following Brauer tree algebra $B$} such that the sms
$\mathcal{S}_2$ over $A$ is mapped to simple $B$-modules:
\unitlength=1.00mm \special{em:linewidth 0.4pt}
\linethickness{0.4pt}
\begin{center}
\begin{picture}(12.00,12.00)
\put(-9,3){$B$} \put(-5,3){$=$}\put(0,7){$1$} \put(0,3){$2$}
\put(0,-1){$1$} \put(4,3){$\oplus$} \put(12,7){$2$} \put(8,3){$1$}
\put(16,3){$2$}\put(12,-1){$2$}
\end{picture}
\end{center}
(3) Similar to the previous proposition, this proposition is true
also for any finite dimensional algebra once we replace the simple
modules by non-projective simple modules in the argument.
\end{Rem}

\begin{Thm} \label{sms-smc} Let $A$ be a self-injective algebra.
Let smc($A$)/DPic($A$) and sms$(A)$/StPic($A$) be as above. Then:
\begin{enumerate}
\item Every Nakayama-stable smc of $D^b$(mod$A$) determines an sms
of $\stmod A$ under the natural functor
$\eta_A:D^b$(mod$A)\rightarrow \stmod A$. Conversely, an sms
$\mathcal{S}$ of $\stmod A$ lifts to a Nakayama-stable smc of
$D^b$(mod$A$) if $\mathcal{S}$ is a liftable simple-image sms (see
Problem \ref{liftable-sms}).
\item There is an injective map from smc$(A)$/DPic(A) to
sms$(A)$/StPic($A$). This map is a bijection if and only if
every sms $\mathcal{S}$ of $A$ is a liftable simple-image.
\end{enumerate}
\end{Thm}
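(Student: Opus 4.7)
The three pillars of the proof are already in the excerpt: Al-Nofayee's extension of Rickard's reconstruction theorem (each Nakayama-stable smc of $D^b(\mathrm{mod}A)$ is the image of the simple modules of some self-injective algebra $B$ under a standard derived equivalence), Rickard's compatibility $\tilde\phi\,\nu_B\simeq\nu_A\,\tilde\phi$ for derived equivalences between self-injective algebras, and Linckelmann's theorem (a StM between self-injective algebras sending simples to simples lifts to a Morita equivalence). For any standard derived equivalence $\tilde\phi\colon D^b(\mathrm{mod}B)\to D^b(\mathrm{mod}A)$ I write $\phi=\bar{\tilde\phi}\colon\stmod B\to\stmod A$ for the induced StM, which satisfies $\eta_A\tilde\phi\simeq\phi\,\eta_B$.

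For part~(1), forward direction: starting from a Nakayama-stable smc $\mathbf{S}$, apply Al-Nofayee to obtain $\tilde\phi$ with $\tilde\phi(\mathcal{S}_B)=\mathbf{S}$; then $\eta_A(\mathbf{S})=\phi(\mathcal{S}_B)$ is an sms because $\mathcal{S}_B$ is one and sms's are preserved under StM's by \cite{KL}. For the reverse direction: given a liftable simple-image sms $\mathcal{S}=\phi(\mathcal{S}_B)$ with lift $\tilde\phi$, set $\mathbf{S}:=\tilde\phi(\mathcal{S}_B)$. This is an smc (image of an smc under a triangle equivalence), it is Nakayama-stable because $\tilde\phi\nu_B\simeq\nu_A\tilde\phi$ and $\nu_B(\mathcal{S}_B)=\mathcal{S}_B$, and $\eta_A(\mathbf{S})=\phi\eta_B(\mathcal{S}_B)=\mathcal{S}$ by the defining commutativity of ``lift''.

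For part~(2): the map $[\mathbf{S}]_{\mathrm{DPic}}\mapsto[\eta_A(\mathbf{S})]_{\mathrm{StPic}}$ is well-defined because each element of $\mathrm{DPic}(A)$ descends to $\mathrm{StPic}(A)$ via $\eta_A$. To show injectivity, suppose $\rho(\eta_A(\mathbf{S}_1))=\eta_A(\mathbf{S}_2)$ for some $\rho\in\mathrm{StPic}(A)$; invoke Al-Nofayee to pick $\tilde\phi_i\colon D^b(\mathrm{mod}B_i)\to D^b(\mathrm{mod}A)$ with $\tilde\phi_i(\mathcal{S}_{B_i})=\mathbf{S}_i$ and induced StM's $\phi_i$ satisfying $\phi_i(\mathcal{S}_{B_i})=\eta_A(\mathbf{S}_i)$. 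Then $\phi_2^{-1}\rho\phi_1\colon\stmod B_1\to\stmod B_2$ is a StM sending $\mathcal{S}_{B_1}$ to $\mathcal{S}_{B_2}$, hence by Linckelmann lifts to a Morita equivalence; extend this Morita equivalence to a standard derived equivalence $\tilde\sigma$ still sending simples to simples. Then $\tilde\phi_2\tilde\sigma\tilde\phi_1^{-1}\in\mathrm{DPic}(A)$ sends $\mathbf{S}_1$ to $\mathbf{S}_2$, so $[\mathbf{S}_1]=[\mathbf{S}_2]$ in $\mathrm{smc}(A)/\mathrm{DPic}(A)$. The ``if'' half of the bijection statement is now immediate from the reverse direction of part~(1), which produces a Nakayama-stable smc $\mathbf{S}$ with $\eta_A(\mathbf{S})=\mathcal{S}$ on the nose for every liftable simple-image $\mathcal{S}$. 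For the ``only if'' half: if the orbit map is surjective, any sms $\mathcal{S}$ lies in the StPic-orbit of some $\eta_A(\mathbf{S})=\phi(\mathcal{S}_B)$; the Al-Nofayee lift $\tilde\phi$ then exhibits $\phi(\mathcal{S}_B)$ as a liftable simple-image, so (identifying sms's up to StPic-orbit) every sms is a liftable simple-image.

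The main obstacle is the ``only if'' direction of the iff in part~(2): surjectivity of the orbit map only furnishes a liftable simple-image representative per StPic-orbit, whereas the literal reading of the statement requires each individual sms to be liftable simple-image. Either one interprets ``every sms is a liftable simple-image'' as a statement about orbits, matching the orbit-level framework of the map, or one must argue that liftability propagates along each StPic-orbit by a careful analysis of the image of the natural homomorphism $\mathrm{DPic}(A)\to\mathrm{StPic}(A)$ acting on $\phi(\mathcal{S}_B)$. Aside from this single subtlety, the proof is a routine diagram-chase assembling Al-Nofayee's reconstruction, Rickard's Nakayama compatibility, and Linckelmann's Morita lifting, in the same spirit as the proof of Theorem~\ref{orbits-sms}.
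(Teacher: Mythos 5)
Your proof follows essentially the same route as the paper's: Al-Nofayee's reconstruction together with Rickard's standardisation and the induced StM for part (1) in both directions, and Linckelmann's theorem promoted back to a standard derived equivalence for the injectivity in part (2). The one point where you go beyond the paper is the ``only if'' half of the bijection criterion: the paper dismisses it with ``it is not difficult to see,'' whereas you correctly observe that surjectivity of the orbit map only directly produces a liftable simple-image representative in each $\mathrm{StPic}(A)$-orbit, and upgrading this to every individual sms would require the element $\rho\in\mathrm{StPic}(A)$ moving the representative to the given sms (or the composite $\rho\overline{\phi}$) to be liftable, which is not automatic. This is a real subtlety, but it is harmless for the rest of the paper: only the ``if'' direction is invoked later (in the proof of Theorem \ref{RFS-sms-smc-bijec}), and in the representation-finite case Theorem \ref{lifting-theorem} establishes that every sms is liftable simple-image outright, so the distinction between the orbit-level and pointwise readings disappears exactly where the statement is applied.
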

\begin{Proof}
(1) Let $\mathbf{S}=\{X_1,\cdots,X_r\}$ be a Nakayama-stable smc of
$D^b$(mod$A$). By Al-Nofayee \cite{Al-Nofayee2007}, there exists a
self-injective algebra $B$ (unique up to Morita equivalence) and a
derived equivalence $\phi: D^b$(mod$B)\rightarrow D^b$(mod$A$) such
that $\phi$ sends simple $B$-modules onto $\mathbf{S}$. By Rickard
\cite[Corollary 3.5]{Rickard1991}, we can assume that $\phi$ is a
standard derived equivalence. Notice that the number $r$ must be
equal to the number of (isoclasses of) simple $A$-modules, since a
derived equivalence preserves the Grothendieck group. By Rickard
\cite{Rickard1989,Rickard1991}, $\phi$ induces a StM
$\overline{\phi}:\stmod B\rightarrow \stmod A$ so that the following
commutative diagram

$$\xymatrix{D^b(modB)\ar[r]^{\phi}\ar[d]_{\eta_B}& D^b(modA)\ar[d]^{\eta_A}\\\stmod B\ar[r]^{\overline{\phi}}& \stmod A}$$
commutes up to natural isomorphism. Since $\eta_B$ is the identity
on modules, $\overline{\phi}$ sends simple $B$-modules onto
$\eta_A(\mathbf{S}_1)=\{\eta_A(X_1),\cdots,\eta_A(X_r)\}$, and
therefore $\eta_A(\mathbf{S}_1)$ is an sms over $A$.

Conversely, suppose that $\mathcal{S}$ is a {liftable simple-image
sms}. Then there is a StM $\overline{\phi}:\stmod B\rightarrow\stmod A$ such that $\overline{\phi}$ sends simple $B$-modules onto$\mathcal{S}$ and
that $\overline{\phi}$ lifts to a derived
equivalence $\phi: D^b$(mod$B)\rightarrow D^b$(mod$A$). Again by
Rickard \cite[Corollary 3.5]{Rickard1991}, we can assume that $\phi$
is a standard derived equivalence. It follows that the image
$\mathbf{S}$ of simple $B$-modules under $\phi$ is a Nakayama-stable
smc of $D^b$(mod$A$), which is clearly a lifting of the sms
$\mathcal{S}$.

(2) Let $\mathbf{S}$ be a Nakayama-stable smc of $D^b$(mod$A$) as in
(1). Suppose that $\mathcal{S}'=\{X_1',\cdots,X_r'\}$ is another
Nakayama-stable smc of $D^b$(mod$A$). As before, there is another
algebra $B'$ and a standard derived equivalence $\phi
':D^b$(mod$B')\rightarrow D^b$(mod$A$) such that $\phi '$ sends
simple $B'$-modules onto $\mathbf{S}'$. Similarly, the induced StM
$\overline{\phi'}:\stmod B'\rightarrow\stmod A$ sends simple
$B'$-modules onto
$\eta_A(\mathbf{S}')=\{\eta_A(X_1'),\cdots,\eta_A(X_r')\}$, and
$\eta_A(\mathbf{S}')$ is an sms over $A$. Suppose moreover, that
$[\eta_A(\mathbf{S})]=[\eta_A(\mathbf{S}')]$ in sms$(A)$/StPic($A$),
that is, there is a StM $\alpha:\stmod A\rightarrow \stmod A$
sending the sms $\eta_A(\mathbf{S})$ to $\eta_A(\mathbf{S}')$. Then
the composition $\overline{\phi'}^{~-1}\alpha\overline{\phi}:\stmod
B\rightarrow \stmod B'$ sends simple modules to simple modules, and
by Linckelmann's theorem, $\overline{\phi
'}^{~-1}\alpha\overline{\phi}$ is lifted to a Morita equivalence:
mod$B\rightarrow$ mod$B'$, which again induces a derived equivalence
$\beta: D^b$(mod$B)\rightarrow D^b$(mod$B'$) sending simple
$B$-modules to simple $B'$-modules. The composition $\phi
'\beta\phi^{-1}:D^b$(mod$A)\rightarrow D^b$(mod$A$) of the derived
equivalences sends the Nakayama-stable smc $\mathbf{S}$ onto
$\mathbf{S}'$, and so we have $[\mathbf{S}]=[\mathbf{S}']$ in
smc$(A)$/DPic($A$). We have shown that there is an injective map
from smc$(A)$/DPic($A$) to sms$(A)$/StPic($A$). It is not difficult
to see that this map is a bijection if and only if {every sms
$\mathcal{S}$ of $A$ is a liftable simple-image.}
\end{Proof}
\begin{Rem}
{(1) This proposition justifies the terminology ``liftable
simple-image sms", in which case the sms considered can then be
lifted to an (Nakayama-stable) smc.}

(2) We will see in Section 4 that the above map is a bijection in
case that $A$ is a representation-finite self-injective
algebra.

{(3) In the representation-infinite case, there exists simple-image
sms $\mathcal{S}$ of Morita type under a non-liftable StM. For
example, let $A$ and $B$ be the principal} blocks of the Suzuki
group $S_z(8)$ and of the normalizer of a Sylow $2$-subgroup of
$S_z(8)$ over a field $k$ of characteristic $2$. Then $A$ and $B$
are stably equivalent of Morita type, say under $\phi$, but not
derived equivalent by \cite{Broue1994}. Obviously,
$\mathcal{S}=\phi(\mathcal{S}_B)$ is a simple-image sms of Morita
type over $A$.  If there is another algebra $C$ so that $\psi:\stmod C\to \stmod A$ is a stable equivalence sending $\mathcal{S}_C$ to
$\mathcal{S}$ and $\psi$ liftable, then
$\phi^{-1}\psi(\mathcal{S}_C)=\mathcal{S}_B$.  By Linckelmann's
theorem $C$ and $B$ are Morita equivalent, as $A$ and $C$ are
derived equivalent. This implies that $A$ and $B$ also are derived
equivalent, which is a contradiction.  Therefore, we have an example
of a simple-image sms of Morita type which is \textit{never} liftable.
\end{Rem}
\bigskip

\section{Sms's and configurations}
In this section we are going to answer the simple-image problem by
proving Theorem C. Following Asashiba \cite{Asashiba1999}, we
abbreviate (indecomposable, basic) representation-finite
self-injective algebra (not isomorphic to the underlying field $k$)
by RFS algebra.

\begin{Thm}
\label{transsms} Let $A$ be an RFS algebra over an
algebraically closed field, and $\mathcal{S}$ an {sms} of $A$. Then
there is an RFS algebra $B$ and a stable equivalence from
$\stmod B$ to $\stmod A$ such that the set of simple $B$-modules is
mapped to $\mathcal{S}$ under the stable equivalence, i.e.
$\mathcal{S}$ is a simple-image sms.
\end{Thm}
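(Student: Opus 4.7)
The plan is to reduce the statement to a combinatorial assertion about configurations on the stable Auslander-Reiten quiver $\Gamma_s(A)$, and then invoke the classification results of Riedtmann and Asashiba. More precisely, I would first establish (this is the content of what becomes Theorem C, \ref{sms-configuration}) that every sms $\mathcal{S}$ of $A$ determines a (combinatorial) configuration on $\Gamma_s(A)$: the orthogonality condition, together with the Auslander-Reiten formula applied to irreducible morphisms, forces the vertices occupied by $\mathcal{S}$ to satisfy Riedtmann's axioms, while the generating condition (iterative extension closure under $\langle\mathcal{S}\rangle_n$) translates into the covering/exhaustion property of a configuration.

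Granted this dictionary, the theorem reduces to showing that every configuration on $\Gamma_s(A)$ arises as the set of simples of some RFS algebra $B$ stably equivalent to $A$. For the standard case, this is exactly Riedtmann's classical realization theorem: configurations on $\mathbb{Z}\Delta/G$ are realized by the simples of a suitable standard self-injective algebra, working with the mesh category of the universal cover. For the non-standard case (the exceptional $(D_{3m},s/3,1)$ family in characteristic $2$), I would use Asashiba's classification and the explicit representative quiver data (acknowledged in the preface) to produce $B$ by hand. Since $B$ is then built so as to share the stable AR-quiver type $(\Delta,f/e,t)$ with $A$, Asashiba's classification of stable equivalences among RFS algebras guarantees that $A$ and $B$ are stably equivalent.

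The final step is to refine this abstract stable equivalence into one that \emph{sends simple $B$-modules to $\mathcal{S}$}. The stable equivalence produced by the classification only matches $B$ and $A$ up to an automorphism of $\Gamma_s(A)$; I would adjust it by composing with an element of $\mathrm{StPic}(A)$ realising the required combinatorial automorphism of $\Gamma_s(A)$, using covering-theoretic lifts of translation-quiver automorphisms along the universal cover. That the required automorphisms all lift to stable self-equivalences is standard for standard RFS algebras, and handled case-by-case (again using Asashiba's data) in the non-standard case.

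The main obstacle is twofold: first, verifying that the orthogonality and generating conditions of an sms genuinely match Riedtmann's combinatorial axioms (in particular the ``$\tau$-perpendicularity'' hidden in the $\underline{\mathrm{Hom}}$-vanishing), which requires care about where projective covers sit in the AR-quiver and the role of the assumption that $A$ has no nodes; and second, the non-standard case, where one does not have a clean mesh-category description and must instead argue directly with the exceptional algebra. These are the places where the strongest external input (Riedtmann's classification and Asashiba's explicit quivers) is used.
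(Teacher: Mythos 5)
Your proposal follows essentially the same route as the paper: identify the sms with a (combinatorial) configuration on ${}_s\Gamma_A$ (Theorem \ref{sms-configuration}), realize that configuration by an RFS algebra $B$ with the same stable AR-quiver via Riedtmann--Bretscher--L\"aser--Riedtmann (treating the non-standard $(D_{3m},s/3,1)$ family separately), and convert the resulting isomorphism of stable AR-quivers into a stable equivalence $\stmod B\to\stmod A$ carrying $\mathcal{S}_B$ to $\mathcal{S}$ through the mesh-category (respectively modified mesh-category) description of $\underline{\mathrm{ind}}$. The only differences are organizational: the paper builds the equivalence directly from the quiver isomorphism rather than first invoking Asashiba's stable-equivalence classification and then correcting by a stable self-equivalence realizing an automorphism of ${}_s\Gamma_A$, and it records the small point you elide, namely that a configuration marks the radicals of the indecomposable projectives, so one composes with the Heller operator $\Omega_B$ to land on the simple $B$-modules.
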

{\it Strategy of proof.} In Theorem \ref{sms-configuration} below
the simple-minded system $\mathcal{S}$ corresponds to a
configuration $\mathcal{C}$ in the stable AR-quiver $_s\Gamma_A$.
Configurations correspond to RFS algebras. It follows that there is
an RFS algebra $B$ and a stable equivalence $\phi:\stmod A\rightarrow\stmod B$ such that $\phi(\mathcal{S})$ is precisely the set
\{$rad(P)|P$ an (isoclass of) indecomposable projective
$B$-module\}. Applying the Heller operator $\Omega_B$ we get a
stable equivalence $\phi^{-1}\Omega_B:\stmod B\rightarrow \stmod A$
sending simple $B$-modules onto $\mathcal{S}$. Note that this proof
is constructive.
\smallskip

\begin{Rem}\label{rmk-RiedtmannUniqueAlg}
\begin{enumerate}
\item We will see in Section 4 that, for an RFS algebra $A$,
all sms's of $A$ are in fact simple-image of Morita type.
\item The classification theorem of RFS algebras, first proved in the 80's,
does already imply implicitly that $B$ is determined uniquely up to
Morita equivalence.
\end{enumerate}
\end{Rem}

\subsection{Proof of Theorem \ref{transsms}}
Now we start working out the details of the above sketch of proof.
The main tools come from Riedtmann's work on RFS algebras and their AR-quivers, and from Asashiba's work on stable and derived equivalences
between RFS algebras. We use standard definitions of AR theory without
explanations; see \cite{ARS, ASS, BG} for details. In the following we
recall the definitions of configurations and combinatorial
configurations, and see how these notions are translated into the
setting of sms's. Throughout this section $Q$ denotes a Dynkin
quiver of type $A_n, D_n, E_6, E_7$ or $E_8$; and $\mathbb{Z}Q$ is
the corresponding translation quiver with translation denoted as
$\tau$. For a translation quiver $\Gamma$, we let $k(\Gamma)$ be its
mesh category, that is, the path category whose objects are the vertices of
$\Gamma$; morphisms are generated by arrows of $\Gamma$ quotiented
out by the mesh relations. Riedtmann showed in \cite{Riedtmann1}
that for an RFS algebra over an algebraically closed field, the
stable AR-quiver is of the form $\mathbb{Z}Q/\Pi$ for some
admissible group $\Pi$. Consequently we say such algebra is of tree
class $Q$ and has admissible group $\Pi$. Note that we always assume the RFS
algebras considered to be indecomposable, basic and not isomorphic to the
underlying field $k$.

\begin{Def}\label{configuration}{\rm(\cite{BLR})}
A configuration of $\mathbb{Z}Q$ is a subset $\mathcal{C}$ of
vertices of $\mathbb{Z}Q$ such that the quiver
$\mathbb{Z}Q_\mathcal{C}$ is a representable translation quiver.
$\mathbb{Z}Q_\mathcal{C}$ is constructed by adding one vertex $c^*$
for each $c\in\mathcal{C}$ on $\mathbb{Z}Q$; adding arrows
$c\rightarrow c^*\rightarrow \tau^{-1}c$; and letting the
translation of $c^*$ be undefined.
\end{Def}

Here, the following notation is used: A translation quiver is
representable if and only if its mesh category is an Auslander
category. We do not go through the technicalities of these
definitions; the reader can bear in mind that the mesh category of
the Auslander-Reiten quiver (or its universal cover) of an representation-finite algebra is
an Auslander category (see \cite{BG}). The idea is that for
$\Pi$-stable configuration $\mathcal{C}$, $\mathbb{Z}Q/\Pi$ is the
stable AR-quiver of an RFS algebra and $\mathbb{Z}Q_\mathcal{C}/\Pi$
is the AR-quiver of the algebra, where the extra (projective)
vertices $c^*$ are the vertices representing the (isoclasses of)
indecomposable projective modules of the algebra. In particular, the
set $\{rad(P)|P\mbox{ an (isoclass of) indecomposable projective}\}$
of an RFS algebra is a configuration.

\begin{Def}\label{combinatorial-configuration} {\rm(\cite{Riedtmann2})}
Let $\Delta$ be a stable representable quiver. A combinatorial
configuration $\mathcal{C}$ is a set of vertices of $\Delta$ which
satisfy the following conditions:
\begin{enumerate}
\item For any $e, f\in \mathcal{C}$,
$Hom_{k(\Delta)}(e,f)= \left\{\begin{array}{ll} 0 & (e\neq f), \\
k & (e=f).\end{array}\right.$
\item For any $e\in\mathbb{Z}Q$, there exists some $f\in \mathcal{C}$ such that $Hom_{k(\Delta)}(e,f)\neq 0$.
\end{enumerate}
\end{Def}

We also note the following fact in \cite[Proposition 2.3]{Riedtmann2}: if $\pi: \Delta\rightarrow \Gamma$ is a covering,
then $\mathcal{C}$ is a combinatorial configuration of $\Gamma$ if
and only if $\pi^{-1}\mathcal{C}$ is a combinatorial configuration
of $\Delta$. When applied to the universal cover of stable AR-quiver
of RFS algebra $A$, this translates to the following statement: The
$\Pi$-stable configuration of the universal cover is a configuration
of the stable AR-quiver $\mathbb{Z}Q/\Pi$.

Combinatorial configurations have been defined by Riedtmann when
studying self-injective algebras \cite{Riedtmann2}. At first this is
a generalisation of configuration. It is often easier to study and
compute than a configuration as it suffices to look
`combinatorially' at sectional paths of the translation quiver
$\mathbb{Z}Q$ rather than checking whether $k(\Z Q_{\mathcal{C}})$
can be realised as an Auslander category. Therefore, it is
interesting to know if these two concepts coincide. In the case of
RFS algebras, this is true. As mentioned in the sketch previously, a
configuration represents a set
 \{$rad(P)|P$ an (isoclass of) indecomposable projective\}. Applying the
inverse Heller operator $\Omega^{-1}$, which is an auto-equivalence
of the stable category of an RFS algebra, the above set is mapped to
the set of simples of the RFS algebra. Indeed, in
\cite{Riedtmann2,Riedtmann3,BLR} it has been shown that $\Pi$-stable
configuration of $\mathbb{Z}Q$ and combinatorial configuration of
$\mathbb{Z}Q/\Pi$ do coincide. Thus in the following, for an RFS
algebra $A$, we can identify the configurations and combinatorial
configurations of the stable AR-quiver $_s\Gamma_A$.

In \cite{Riedtmann2,Riedtmann3,BLR}, it was also shown that the
isoclasses of $\Pi$-stable $\mathbb{Z}Q$ configurations (two
configurations $\mathcal{C}$ and $\mathcal{C}'$ of $\mathbb{Z}Q$ are
called isomorphic if $\mathcal{C}$ is mapped onto $\mathcal{C}'$
under an automorphism of $\mathbb{Z}Q$) correspond bijectively to isoclasses of
RFS algebras of tree class Q with admissible group $\Pi$, except in
the case of $Q=D_{3m}$ with underlying field having characteristic
$2$. In such a case, each configuration corresponds to two
(isoclasses of) RFS algebras; both are symmetric algebras, one of
which is standard, while the other one is non-standard. Here, a
representation-finite $k$-algebra $A$ is called {\it standard} if
$k(\Gamma_A)$ is equivalent to ind$A$, where $\Gamma_A$ is the
AR-quiver of $A$ and ind$A$ is the full subcategory of mod$A$ whose
objects are specific representatives of the isoclasses of
indecomposable modules. This implies that any other standard RFS
algebras with AR-quiver isomorphic to $\Gamma_A$ is isomorphic to
$A$. Non-standard algebras are algebras which are not standard. The
non-standard algebras also have been studied by Waschb\"{u}sch in
\cite{Waschbusch1981}.  Note that when $A$ is
standard, then $k({}_s\Gamma_A)\simeq \underline{\mathrm{ind}}A$,
where $\underline{\mathrm{ind}}A$ is the full subcategory of
$\underline{\mathrm{mod}}A$ whose objects are objects in ind$A$;
while in case that $A$ is non-standard, $k{}_s\Gamma_A/J\simeq\underline{\mathrm{ind}}A$, where $k{}_s\Gamma_A$ is the path
category of ${}_s\Gamma_A$ and the ideal $J$ is defined by some
modified mesh relations (see \cite{Riedtmann4, Asashiba1999}).

We are now going to collect results from
\cite{BG,BLR,Riedtmann1,Riedtmann2,Riedtmann4} to show that a
configuration of RFS algebra $A$ gives a unique (weakly) sms, that
is, the two notions really coincide. From Definitions {\ref{wsms}}
and \ref{combinatorial-configuration}, the only difference between
them is that the homomorphism space is taken in the stable module
category and the mesh category {of its (universal) covering}
respectively. Hence to show the two notions are the same, it is
enough to show that the homomorphism spaces required in the two
definitions are isomorphic {upon restriction to mesh category of the
AR-quiver}.

\begin{Def}\label{well-behaved-functor} {\rm(\cite{Riedtmann1,Riedtmann2})}
Let $\pi: \Delta\rightarrow \Gamma$ be a covering where $\Gamma$ is
the AR-quiver (or stable AR-quiver) of $A$. The Auslander algebra
$E_A$ of $A$ is $End_A(M)$ where $M= \oplus_i M_i$ with each $M_i$ a
representative of an isoclass of indecomposable $A$-module. A
$k$-linear functor $F: k(\Delta)\rightarrow \mathrm{ind}A \mbox{ (or
}\underline{\mathrm{ind}}A)$ is said to be well-behaved if and only
if
\begin{enumerate}
\item For any $e\in \Delta_0$ with $\pi e = e_i$, we have $Fe = M_i$ where
$M_i$ is the indecomposable $A$-module corresponding to $e_i$;
\item For any $e\stackrel{\alpha}{\rightarrow}f$ in $\Delta_1$, $F\alpha$
is an irreducible map.
\end{enumerate}
\end{Def}

By \cite[Example 3.1b]{BG}, for any RFS algebra $A$ (whenever $A$ is
standard or non-standard), there is a well-behaved functor $F:k(\widetilde{\Gamma}_A)\rightarrow \mathrm{ind}A$ such that $F$ coincides with $\pi$ on objects,
where $\pi:\widetilde{\Gamma}_A\rightarrow \Gamma_A$ is the universal covering of the AR-quiver $\Gamma_A$. By \cite[Section 2.3]{Riedtmann1}, a
well-behaved functor is a covering functor and therefore there is a bijection
$$
\bigoplus_{Fh=Ff}Hom_{k(\widetilde{\Gamma}_A)}(e,h)\simeq Hom_A(Fe,Ff)
$$
for any $e,f,h\in (\widetilde{\Gamma}_A)_0$. Since an irreducible
morphism between non-projective indecomposable remains irreducible
under the restriction
$\mathrm{ind}A\rightarrow\underline{\mathrm{ind}}A$, the
well-behaved functor $F:k(\widetilde{\Gamma}_A)\rightarrow
\mathrm{ind}A$ restricts to a well-behaved functor
$\overline{F}:k({}_s\widetilde{\Gamma}_A)\rightarrow\underline{\mathrm{ind}}A$,
where ${}_s\widetilde{\Gamma}_A$ is the stable part of the
translation quiver $\widetilde{\Gamma}_A$. Note that the restriction
$\pi: {}_s\widetilde{\Gamma}_A\rightarrow{}_s\Gamma_A$ is also a
covering of the stable AR-quiver $_s\Gamma_A$. It follows that there
are bijections:

$$
\bigoplus_{Fh=Ff}Hom_{k({}_s\widetilde{\Gamma}_A)}(e,h)\simeq
\underline{Hom}_A(Fe,Ff);
$$

$$
\bigoplus_{\pi h=\pi f}Hom_{k({}_s\widetilde{\Gamma}_A)}(e,h)\simeq Hom_{k({}_s{\Gamma}_A)}(\pi e,\pi f).
$$

This implies:

\begin{Thm}\label{sms-configuration}
Let $A$ be an RFS algebra over an algebraically closed field.  Then
there is a bijection:
$$
\{\mbox{Configurations of }{}_s\Gamma_A \} \leftrightarrow
\{\mbox{sms's of } \stmod A\}$$

$$\quad  \quad  \quad \quad \quad \mathcal{C} \mapsto H(\mathcal{C})
$$
where the map $H:k({}_s\Gamma_A)\to \stmod A$ is defined on the
objects of the respective categories, and given by composing the
well-behaved functor
$\overline{F}:k({}_s\widetilde{\Gamma}_A)\rightarrow\underline{\mathrm{ind}}A$ with the natural embedding
$\underline{\mathrm{ind}}A$ into $\stmod A$.
\end{Thm}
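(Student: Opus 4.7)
The plan is to exploit the Hom identification implicit in the two covering-functor isomorphisms displayed just above the theorem, transporting the conditions in Definition \ref{combinatorial-configuration} into the conditions in Definition \ref{wsms}.

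First I would observe that, since the well-behaved functor $\overline{F}:k({}_s\widetilde{\Gamma}_A)\to\underline{\mathrm{ind}}A$ and the covering $\pi:{}_s\widetilde{\Gamma}_A\to {}_s\Gamma_A$ agree on objects, the two fibers appearing in the displayed sums coincide for every choice of $e,f$. The two isomorphisms therefore combine to give the core identification
$$\underline{Hom}_A\bigl(\overline{F}e,\overline{F}f\bigr)\simeq Hom_{k({}_s\Gamma_A)}(\pi e,\pi f),$$
which depends only on the images $\pi e,\pi f$ in the stable AR-quiver. This formula is the engine driving the bijection. Recall as well that, for an RFS algebra, the $\Pi$-stable configurations of ${}_s\widetilde{\Gamma}_A$ correspond to the combinatorial configurations of ${}_s\Gamma_A$ by the results of Riedtmann and of Bongartz--Gabriel recalled above, so I may freely pass between the two notions when checking axioms.

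With this identification in hand, showing that $H(\mathcal{C})$ is an sms reduces to a direct matching of axioms. Condition (1) of Definition \ref{combinatorial-configuration} turns into the orthogonality condition of Definition \ref{wsms}, and condition (2) becomes the weak generating condition. Thus $H(\mathcal{C})$ is a wsms, hence an sms by the result of \cite{KL} that wsms and sms agree for RFS algebras. For the inverse direction, I would send an sms $\mathcal{S}$ to the set $\mathcal{C}(\mathcal{S})$ of vertices of ${}_s\Gamma_A$ whose associated indecomposables lie in $\mathcal{S}$; the same Hom identification, read in reverse, shows that $\mathcal{C}(\mathcal{S})$ is a combinatorial configuration, hence a configuration. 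The two assignments are mutually inverse, since both are induced by the tautological bijection between vertices of ${}_s\Gamma_A$ and isoclasses of non-projective indecomposable $A$-modules.

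The main point of care is the non-standard case. There $k({}_s\Gamma_A)$ is not equivalent to $\underline{\mathrm{ind}}A$ on the nose, but only after quotienting by the modified mesh ideal $J$. Fortunately the well-behaved functor on the universal cover ${}_s\widetilde{\Gamma}_A$ is available uniformly by \cite[Example 3.1b]{BG}, and the covering-functor property used to derive the first of the two displayed isomorphisms does not require $\overline{F}$ itself to be an equivalence. Hence the Hom identification, and therefore the entire bijection, goes through identically in both the standard and non-standard settings.
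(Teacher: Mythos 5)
Your proposal is correct and follows essentially the same route as the paper: the paper's proof consists precisely of combining the two displayed covering-functor isomorphisms (using that $\overline{F}$ and $\pi$ agree on objects) to identify $\underline{Hom}_A(\overline{F}e,\overline{F}f)$ with $Hom_{k({}_s\Gamma_A)}(\pi e,\pi f)$, and then matching the axioms of Definition \ref{combinatorial-configuration} with those of Definition \ref{wsms}, invoking the equivalence of wsms and sms for RFS algebras from \cite{KL} and the identification of configurations with combinatorial configurations. Your added remarks on the inverse assignment and on the non-standard case are exactly the points the paper's surrounding discussion is meant to cover.
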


\begin{Rem}\label{ARquiver-sms-question}
(1) {This theorem shows that all sms's of an RFS algebra $A$ can be
determined from the stable AR-quiver $_s\Gamma_A$, even in
non-standard case. }

(2) {This theorem also shows that $_s\Gamma_A$ determines sms$(B)$
for all indecomposable self-injective algebra $B$ such that
$_s\Gamma_B \simeq {}_s\Gamma_A$.  In fact, such phenomenon also
appears in the following tame case {(see \cite{KL})}: There is an
infinite series of $4$-dimensional weakly symmetric local algebras
$k\langle x,y\rangle /\langle xy-qyx\rangle$ for $q\in k^\times$
which have isomorphic stable AR-quivers, and are not stably
equivalent to each other. Their respective sms's are located in the same positions in
the stable AR-quivers of these algebras. }
\end{Rem}
In order to solve the simple-image problem of sms, we use a stable
equivalence classification of RFS algebras. This has been achieved
by Asashiba \cite{Asashiba2003}.  Before stating his result, we need
to define the type of $A$.  If $A$ is as above, by a theorem of
Riedtmann \cite{Riedtmann1}, $\Pi$ has the form $\langle \zeta\tau^{-r}\rangle$ where $\zeta$ is some automorphism of $Q$ and
$\tau$ is the translation. We also recall the Coxeter numbers of
$Q=A_n,D_n,E_6,E_7,E_8$ are $h_Q = n+1,2n-2,12,18,30$ respectively.
The frequency of $A$ is defined to be $f_A = r/(h_Q-1)$ and the
torsion order $t_A$ of $A$ is defined as the order of $\zeta$.  The
type of $A$ is defined as the triple $(Q,f_A,t_A)$.
{Note that the number of isoclasses of simple
$A$-modules is equal to $nf_A$.}

\begin{Thm}
\label{dclassRFS} {\rm(\cite{Asashiba1999,Asashiba2003})} Let $A$
and $B$ be RFS $k$-algebras for $k$ algebraically
closed.
\begin{enumerate}
\item If $A$ is standard and $B$ is non-standard, then $A$ and $B$ are not stably equivalent, and hence not derived equivalent.
\item If both $A$ and $B$ are standard, or both non-standard, the
following are equivalent:
\begin{enumerate}
\item $A,B$ are derived equivalent;
\item $A,B$ are stably equivalent of Morita type;
\item $A,B$ are stably equivalent;
\item $A,B$ have the same stable AR-quiver;
\item $A,B$ have the same type.
\end{enumerate}
\item The types of standard RFS algebras are the following:
\begin{enumerate}
\item $\{ (A_n,s/n,1) | n,s\in \mathbb{N}\}$,
\item $\{ (A_{2p+1},s,2) | p,s\in \mathbb{N}\}$,
\item $\{ (D_n,s,1) | n,s\in \mathbb{N}, n\geq 4\}$,
\item $\{ (D_{3m},s/3,1) | m,s\in \mathbb{N} , m\geq 2, 3\nmid s\}$,
\item $\{ (D_n,s,2) | n,s\in \mathbb{N}, n\geq 4\}$,
\item $\{ (D_4,s,3) | s\in \mathbb{N}\}$,
\item $\{ (E_n,s,1) | n=6,7,8; s\in \mathbb{N}\}$,
\item $\{ (E_6,s,2) | s\in \mathbb{N}\}$.
\end{enumerate}
Non-standard RFS algebras are of type $(D_{3m},1/3,1)$ for some
$m\geq 2$.
\end{enumerate}
\end{Thm}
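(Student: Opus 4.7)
The plan is to prove the five-way equivalence in (2) as a cycle, and handle (1) and (3) separately. The implications (a) $\Rightarrow$ (b) $\Rightarrow$ (c) are standard: the first is Rickard's theorem that a standard derived equivalence between self-injective algebras descends to a stable equivalence of Morita type, and the second is trivial. For (c) $\Rightarrow$ (d), any stable equivalence preserves the stable Auslander-Reiten quiver up to translation-quiver isomorphism, since it maps indecomposable non-projectives to indecomposable non-projectives and respects almost split sequences. The implication (d) $\Rightarrow$ (e) is combinatorial: by Riedtmann \cite{Riedtmann1}, ${}_s\Gamma_A \simeq \mathbb{Z}Q/\langle \zeta\tau^{-r}\rangle$, and the tree class $Q$, the order of $\zeta$ (hence $t_A$), and the parameter $r$ (hence $f_A = r/(h_Q-1)$) can be recovered directly from the translation-quiver structure.

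The main obstacle is the reverse implication (e) $\Rightarrow$ (a), which is the technical core of \cite{Asashiba1999}. I would follow Asashiba's strategy: fix a representative algebra for each type — presented by an explicit quiver with relations — and prove that every RFS algebra of that type is derived equivalent to the chosen representative. This requires exhibiting, for each type, a finite sequence of Okuyama-Rickard elementary tilts connecting an arbitrary representative to the normal form. The crux is tracking how each elementary tilt acts on the admissible group $\Pi = \langle \zeta\tau^{-r}\rangle$, and verifying that the sequence remains within the prescribed list of types; this is a substantial piece of combinatorial bookkeeping, done case-by-case over the types listed in (3).

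For (1), I would argue that a stable equivalence between a standard $A$ and a non-standard $B$ would induce an equivalence $\underline{\mathrm{ind}}\, A \simeq \underline{\mathrm{ind}}\, B$, hence an equivalence $k({}_s\Gamma_A) \simeq k({}_s\Gamma_B)/J$ with $J$ the nontrivial ideal of modified mesh relations from \cite{Riedtmann4}. Since ${}_s\Gamma_A \simeq {}_s\Gamma_B$ by the argument in the previous paragraph, quotienting the mesh category by a nonzero ideal cannot yield the unquotiented mesh category, giving a contradiction. For (3), the list of types comes from Riedtmann's classification of admissible groups acting freely on $\mathbb{Z}Q$ for Dynkin $Q$, combined with the case-by-case analysis of which types admit standard versus non-standard realisations; by Waschb\"usch \cite{Waschbusch1981} the non-standard companions arise exactly in type $(D_{3m}, 1/3, 1)$ in characteristic $2$.
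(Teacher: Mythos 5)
First, note that the paper does not actually prove Theorem \ref{dclassRFS}: it is imported verbatim from Asashiba's work \cite{Asashiba1999,Asashiba2003}, so there is no internal proof to compare against. Your outline is a reasonable reconstruction of the architecture of Asashiba's argument (the easy chain (a)$\Rightarrow$(b)$\Rightarrow$(c)$\Rightarrow$(d)$\Rightarrow$(e) plus the hard classification step (e)$\Rightarrow$(a) via normal forms and tilting), but two of your steps contain genuine gaps.

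The more serious one is part (1). You argue that a stable equivalence between a standard $A$ and a non-standard $B$ would identify $k({}_s\Gamma_A)$ with $k\,{}_s\Gamma_B/J$, and that ``quotienting the mesh category by a nonzero ideal cannot yield the unquotiented mesh category.'' But $k\,{}_s\Gamma_B/J$ is not a quotient of the mesh category $k({}_s\Gamma_B)$: both are quotients of the path category $k\,{}_s\Gamma_B$ by \emph{different} ideals (the ordinary mesh relations versus Riedtmann's modified mesh relations), and neither ideal contains the other, so the ``nonzero quotient'' argument does not apply. Indeed, in characteristic $\neq 2$ the modified relations can be removed by rescaling arrows and the two categories \emph{are} equivalent; the entire content of (1) is that in characteristic $2$ no such rescaling exists and the categories are genuinely inequivalent. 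That is a substantive computation (it is exactly the standard/non-standard separation carried out in \cite{Asashiba1999}), not a formal consequence. Secondly, (e)$\Rightarrow$(a) --- the actual derived equivalence classification --- is only gestured at (``a substantial piece of combinatorial bookkeeping''); as written this is a plan rather than a proof, and it is precisely the part occupying most of \cite{Asashiba1999}. Smaller points: in (a)$\Rightarrow$(b) you must first invoke Rickard's result that an arbitrary derived equivalence can be replaced by a standard one before descending to a StM, and in (c)$\Rightarrow$(d) the preservation of the stable AR-quiver, including its translation, by an arbitrary (not necessarily triangulated) stable equivalence relies on the no-nodes hypothesis and the Auslander--Reiten theorem on preservation of almost split sequences, which should be cited rather than asserted.
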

\begin{Rem}
The RFS types which correspond to symmetric algebras are $\{(A_n,s/n,1) | s\in \mathbb{N},s\mid n\}$, $\{(D_{3m},1/3,1)|m\geq 2\}$,
$\{(D_n,1,1)|n\in \mathbb{N}, n \geq 4\}$ and $\{(E_n,1,1)|n=6,7,8\}$.
\end{Rem}

Combining these results with the fact that each configuration
corresponds to a set of simple modules of a (unique) stably
equivalent algebra and the fact that configuration and sms are the
same notion, the simple-image problem for RFS algebras has a
positive answer as stated above.  More precisely, let $\mathcal{S}$
be an {sms} of $A$. Then $\mathcal{S}$ corresponds to a configuration
$\mathcal{C}$ in the stable AR-quiver $_s\Gamma_A$, which
corresponds to $\mathcal{S}_B$ for some algebra $B$ with
$_s\Gamma_B\simeq {}_s\Gamma_A$.  This isomorphism between stable
AR-quivers then induces an equivalence $k(_s\Gamma_B)\to k(_s\Gamma_A)$ (or an equivalence $k_s\Gamma_B/J\to k_s\Gamma_A/J$
in case that $A$ and $B$ are non-standard).

Now this equivalence induces an equivalence
$\underline{\mathrm{ind}}B\to \underline{\mathrm{ind}}A$ and
consequently, an equivalence $\stmod B\to \stmod A$, with the
property that it sends $\mathcal{S}_B$ to $\mathcal{S}$.  This
finishes the proof of Theorem \ref{transsms}.
\bigskip

As a by-product of using configurations, we can pick out the
RFS algebras for which the transitivity problem raised in \cite{KL} has a
positive answer. That is, we can decide whether given two sms's of an algebra
there always is a stable self-equivalence sending the first sms to the second one.

\begin{Prop}\label{transitive-stable-self-equivalence}
If $A$ is an RFS algebra in the following list, then for any pair of
sms's $\mathcal{S}, \mathcal{S}'$ of $A$, there is a stable
self-equivalence $\phi:\stmod{A}\to\stmod{A}$ such that
$\phi(\mathcal{S})=\mathcal{S}'$. The list consists of $\{(A_2, s/2
, 1)| s\geq 1\}$, $\{(A_n, s/n, 1)| n\geq 1, \gcd(s,n)=1\}$,
$\{(A_3, s,2)| s\geq 1\}$, $\{(D_6, s/3,1)| s\geq 1, 3\nmid s\}$,
$\{(D_4, s,3)| s\geq 1\}$.
\end{Prop}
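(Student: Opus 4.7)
The strategy is to translate the transitivity claim, via Theorem \ref{sms-configuration}, into a statement about the action of the automorphism group of the stable AR-quiver $_s\Gamma_A$ on its configurations, and then verify this statement by a case analysis on the type of $A$.

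First, every automorphism $\sigma$ of the translation quiver $_s\Gamma_A$ lifts to a stable self-equivalence of $A$. In the standard case, $\sigma$ extends $k$-linearly to an auto-equivalence of the mesh category $k(_s\Gamma_A) \simeq \underline{\mathrm{ind}}A$, and hence of $\stmod A$. In the non-standard case (type $(D_{3m}, 1/3, 1)$), where $k(_s\Gamma_A)/J \simeq \underline{\mathrm{ind}}A$, one checks that the relevant translation-quiver automorphisms preserve the ideal $J$ of modified mesh relations, so they still descend to an auto-equivalence of $\underline{\mathrm{ind}}A$. By Theorem \ref{sms-configuration}, the sms attached to a configuration $\mathcal{C}$ is then carried to the sms attached to $\sigma(\mathcal{C})$.

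Second, the problem reduces to the combinatorial assertion that $\mathrm{Aut}(_s\Gamma_A)$ acts transitively on configurations of $_s\Gamma_A$. By the Riedtmann correspondence \cite{Riedtmann2,Riedtmann3,BLR}, the orbits of $\mathrm{Aut}(\mathbb{Z}Q)$ on $\Pi$-stable configurations of $\mathbb{Z}Q$ are in bijection with isoclasses of RFS algebras of tree class $Q$ with admissible group $\Pi$. So the orbits of $\mathrm{Aut}(_s\Gamma_A)$ on configurations of $_s\Gamma_A$ are in bijection with the isoclasses of RFS algebras having stable AR-quiver $_s\Gamma_A$, and transitivity holds precisely when there is a unique such isoclass (within the standard, respectively non-standard, class) of the given type.

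Third, for each family in the proposition's list, this uniqueness is verified by direct inspection using the quiver-with-relations presentations of Asashiba \cite{Asashiba1999}: in each case, either the tree class is so small that no extra parameters occur, or combinatorial rigidity (of the Brauer line algebras arising for $\{(A_n, s/n, 1)\mid \gcd(s,n)=1\}$ and $\{(A_2,s/2,1)\}$, of the M\"obius algebras for $(A_3, s, 2)$, of the triality family $(D_4, s, 3)$, and of the exceptional $(D_6, s/3, 1)$ family with $3 \nmid s$) forces the algebra up to isomorphism. For the non-standard subcase $(D_6, 1/3, 1)$, the analogous uniqueness is extracted from Riedtmann \cite{Riedtmann4} and Waschb\"usch \cite{Waschbusch1981}. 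The main obstacle is this last step: the case analysis itself is routine once the classification is in hand, but one must be careful with the $(D_6, s/3, 1)$ family and its non-standard companion, since transitive action on configurations must be checked in parallel with the preservation of the twisted ideal $J$ invoked in the lifting argument of the first step.
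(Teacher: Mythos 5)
Your overall strategy coincides with the paper's: via Theorem \ref{sms-configuration} the claim becomes the assertion that $\mathrm{Aut}({}_s\Gamma_A)$ acts transitively on the configurations of ${}_s\Gamma_A$, and your first step (lifting a translation-quiver automorphism to a stable self-equivalence through $k({}_s\Gamma_A)\simeq\underline{\mathrm{ind}}A$, resp. $k{}_s\Gamma_A/J\simeq\underline{\mathrm{ind}}A$) is exactly what the paper does. The identification of configuration orbits with isomorphism classes of stably equivalent algebras is also consistent with the paper, where it appears as the remark following the proposition.

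The gap is in your third step, which is where essentially all the content of the proposition lives. You propose to verify uniqueness ``by direct inspection using the quiver-with-relations presentations of Asashiba'', but Asashiba's lists give one representative per derived equivalence class (i.e.\ per type), not a census of isomorphism classes within a type; they cannot tell you how many isoclasses of algebras realise a given stable AR-quiver. That count \emph{is} the number of configuration orbits, which is exactly what has to be computed, so the argument as stated is circular, and ``combinatorial rigidity'' is asserted rather than proved. The paper supplies the missing computations: (i) a reduction using the fact that configurations of $\Z A_n$ (resp.\ $\Z D_n$) are automatically $\tau^{n\Z}$-stable (resp.\ $\tau^{(2n-3)\Z}$-stable), which collapses each family over $s$ to a single small case; (ii) for $(A_n,s/n,1)$, the identification of the number of orbits with the number of Brauer trees with $d=\gcd(s,n)$ edges and multiplicity $n/d$, which is $1$ iff $d=1$ or $(d,n/d)=(2,1)$; (iii) for $(A_3,s,2)$, an explicit computation exhibiting the single orbit with representative $\{(0,1),(1,2),(2,3)\}$; (iv) for $(D_4,s,3)$, Riedtmann's theorem that $(D_4,1,1)$ has \emph{two} orbits of configurations, exactly one of which is stable under the order-$3$ graph automorphism --- so this case is not rigidity of the $D_4$ family at all, but a $\zeta$-stability selection that your sketch does not mention; (v) for $(D_{3m},s/3,1)$, Waschb\"usch's parametrisation by Brauer trees with $m$ edges, multiplicity $1$ and a chosen extremal vertex, which is unique only for $m=2$. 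Without (i)--(v), or something equivalent, the specific list in the statement is not established.
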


\begin{proof}
$A$ is an RFS algebra satisfying the condition stated if and only if the set of its sms's
 modulo the action of stable self-equivalences (i.e. the set of orbits of sms's under stable self-equivalences) is of size 1.
Every stable self-equivalence induces an automorphism of the stable AR-quiver $_s\Gamma_A=\mathbb{Z}Q/\Pi$ of $A$.
Conversely, any automorphism of $_s\Gamma_A$ induces a self-equivalence of $k(_s\Gamma_A)$ or of $k{}_s\Gamma_A/J$,
depending on $A$ being standard or not. Hence it induces stable self-equivalences of \underline{ind}$A$,
and consequently of $\stmod{A}$.  Therefore, identifying an sms with a configuration using Theorem \ref{sms-configuration},
the algebras $A$ we are looking for are those whose set Conf($_s\Gamma_A$)/Aut($_s\Gamma_A$) has just one element.
Here Conf($_s\Gamma_A$) is the set of configurations of $_s\Gamma_A$.  We now look at the number of Aut($_s\Gamma_A$)-orbits case by case.

For $E_n$ cases, one can count explicitly from the list of configurations in \cite{BLR} that the number of Aut($_s\Gamma_A$)-orbits are always greater than 1.

Now consider class $(A_n,s/n,1)$, $_s\Gamma_A=\Z A_n/\langle
\tau^s\rangle$.  Note that configurations of $\Z A_n$ are
$\tau^{n\Z}$-stable, so any configuration of $(A_n,s/n,1)$ are
$\tau^{d\Z}$-stable with $d=\gcd(s,n)$.  Let $s=ld$ and $n=md$. The
above implies configurations of $(A_n,l/m,1)$ are the same as
configurations of $(A_n,1/m,1)$.
But the number of the configurations of $(A_n,1/m,1)$ is equal to
the number of Brauer trees with $d$ edges and multiplicity $m$,
which is equal to 1 if and only if the pair $(d,m)=(2,1)$ or $d=1$.
Therefore, $(d,m)=(2,1)$ gives $\{(A_2,1,1)\}$, and $d=1$ yields the
family $\{(A_m,1/m,1)\}$.

Let $n=2p+1$. For the class $(A_n,s,2)$, $_s\Gamma_A=\Z A_n/\langle
\zeta\tau^{sn}\rangle$. A configuration of $(A_n,s,2)$ is
$\tau^{n\Z}$-stable as it is also a configuration of $\Z A_n$. So we
only need to consider the case $s=1$.
  Recall from
\cite[Lemma 2.5]{Riedtmann4} that there is a map which takes
configurations of $\Z A_n$ to configurations of $\Z A_{n+1}$, so the
numbers of orbits of $(A_n,1,2)$-configurations form an increasing
sequence. Therefore, we can just count the orbits explicitly.
$(A_3,1,2)$ has one orbit of configurations given by the
representative $\{(0,1),(1,2),(2,3)\}$, whereas $(A_5,1,2)$ has two
orbits.  This completes the $A_n$ cases.

Note that configuration of $\Z D_n$ is $\tau^{(2n-3)\Z}$-stable, so
similar to $A_n$ case we can reduce to the cases $(D_n,1,1),
(D_n,1,2), (D_4,1,3)$, and $(D_{3m},1/3,1)$. We make full use of the
main theorem in \cite{Riedtmann3} combining with our result in the
$A_n$ cases. Part (a) of the theorem implies that $(D_n,1,1)$ and
$(D_n,1,2)$ with $n\geq 5$ all have more than one orbits.  Part (c)
of the theorem implies that $(D_4,1,1)$ and $(D_4,1,2)$ has two
orbits, with representatives $\{(0,1), (1,1), (3,3), (3,4)\}$ and
$\{(0,2),(3,3),(3,4),(4,1)\}$. Since the latter is the only orbit
which is stable under the order 3 automorphism of $\Z D_4$, implying
$\{(D_4,s,3)| s\geq 1\}$ is on our required list. Finally, for
$(D_{3m},1/3,1)$ case, we use the description of this class of
algebras from \cite{Waschbusch1981}, which says that such class of
algebra can be constructed via Brauer tree with $m$ edges and
multiplicity 1 with a chosen extremal vertex.  Therefore, the only
$m$ with a single isomorphism class of stably equivalent algebra is
when $m=2$, hence giving us $\{(D_6,s/3,1)|s\geq 1, 3\nmid s\}$.
\end{proof}

\begin{Rem}
By the classification of RFS algebras due to Riedtmann and to
Bretscher, L\"aser and Riedtmann, the set
Conf($_s\Gamma_A$)/Aut($_s\Gamma_A$) is in bijection with the set
StAlg($A$) of Morita equivalence classes of algebras stably
equivalent to $A$ (Remark \ref{rmk-RiedtmannUniqueAlg}). Hence this
is also the list of RFS algebras for which StMAlg($A$) is 1, due to
Asashiba's theorem \ref{dclassRFS} (2).
\end{Rem}

\section{Sms's and Nakayama-stable smc's}
Our aim in this section is to prove that for an RFS algebra
$A$, every sms of $A$ lifts to a Nakayama-stable smc of
$D^b$(mod$A$), i.e. all sms of $A$ are liftable simple-image, which
proves the second assertion in Theorem B. We first state the results
and some consequences; the second part of this section then provides
the proof of the following result:

\begin{Thm} \label{lifting-theorem} Let $A$ be an RFS $k$-algebra over $k$ algebraically closed.
Then every sms $\mathcal{S}$ of $A$ is simple-image of Morita type under a liftable StM.
 Moreover, if $A$ and $B$ are two standard RFS
algebras, then every stable equivalence between $A$ and $B$ lifts to a standard
derived equivalence, and hence in particular, it is of Morita type.
\end{Thm}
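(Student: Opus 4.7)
The plan is to derive the first assertion from the second, modulo a separate treatment of the non-standard case, and to prove the second assertion via an analysis of the stable Picard group.

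For the ``moreover'' clause, given a stable equivalence $\psi: \stmod A \to \stmod B$ between standard RFS algebras, I would first invoke Theorem \ref{dclassRFS} to conclude that $A$ and $B$ share a common type. Asashiba's classification produces an explicit \emph{standard} derived equivalence $\Phi: D^b(\mathrm{mod}\, A) \to D^b(\mathrm{mod}\, B)$, which descends under the quotient functors $\eta_A, \eta_B$ to a stable equivalence of Morita type $\bar\Phi: \stmod A \to \stmod B$ (using Rickard's result that standard derived equivalences between self-injective algebras induce StM). The composition $\rho := \bar\Phi^{-1} \circ \psi \in \mathrm{StPic}(A)$, so the problem reduces to showing $\rho$ lies in the image of the natural map $\mathrm{DPic}(A) \to \mathrm{StPic}(A)$.

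To show this surjectivity for a standard RFS algebra, I would use Asashiba's description of $\mathrm{StPic}(A)$ (combined with Riedtmann's combinatorics of $_s\Gamma_A$) to list a set of generators: these consist of algebra automorphisms, the syzygy shift $\Omega$, the Nakayama automorphism, and Auslander--Reiten translation $\tau$, together with twists coming from automorphisms of the stable AR-quiver. Each such generator is visibly induced by a standard derived self-equivalence of $A$ (automorphisms lift trivially; $\Omega$ is the restriction of the shift $[-1]$; $\tau$ is induced by the Serre twist $\nu_A[-1]$; etc.). So $\rho$ lifts, hence so does $\psi$, finishing the second assertion.

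For the first assertion, Theorem \ref{transsms} already provides an RFS algebra $B$ and a stable equivalence $\phi: \stmod B \to \stmod A$ with $\phi(\mathcal{S}_B) = \mathcal{S}$. By Theorem \ref{dclassRFS}(1), $A$ and $B$ are either both standard or both non-standard. In the standard case, the ``moreover'' part gives that $\phi$ is a liftable StM immediately. In the non-standard case, $A$ and $B$ are both of type $(D_{3m},1/3,1)$ and are symmetric; here I would invoke Dugas's lifting theorem for stable equivalences between non-standard RFS algebras to upgrade $\phi$ to a standard derived equivalence. Combining both cases yields that $\mathcal{S}$ is simple-image of Morita type under a liftable StM.

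The main obstacle is the surjectivity $\mathrm{DPic}(A) \twoheadrightarrow \mathrm{StPic}(A)$ in the standard case, because stable self-equivalences are a priori only $k$-linear triangulated functors on $\stmod A$ and need not come from any identifiable global construction. The payoff of the combinatorial framework of Section~3 is precisely that every stable self-equivalence is, via Theorem \ref{sms-configuration} and Asashiba's rigidity for standard RFS algebras, controlled by an automorphism of $_s\Gamma_A$; and each such automorphism is realized by a derived self-equivalence. The non-standard branch relies in an essential way on Dugas's work, without which the first assertion would not be accessible by the present methods.
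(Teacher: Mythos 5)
Your overall skeleton matches the paper's: reduce the ``moreover'' clause to surjectivity of the natural map $\mathrm{DPic}(A)\to\mathrm{StPic}(A)$ for standard RFS algebras via Asashiba's classification, and then deduce the first assertion from Theorem \ref{transsms}, treating the non-standard case separately. But there is a genuine gap at the decisive step. By Theorem \ref{stable-Picard-group-RFS}, the generators of $\mathrm{StPic}(A)$ are $\mathrm{Pic}'(A)$, $[\Omega_A]$, and -- only for type $(D_{3m},s/3,1)$ with $m\geq 2$, $3\nmid s$ -- the extra element $[H]$ induced by the automorphism of $D_{3m}$ swapping the two high vertices. Your claim that each generator is ``visibly induced by a standard derived self-equivalence'' is true for $\mathrm{Pic}'(A)$, $\Omega$, $\nu$ and $\tau$, but it is precisely false-by-default for $H$: its liftability was an open problem (this is why Asashiba's 2003 main theorem excludes the types $(D_{3m},s/3,1)$), and establishing it is the main content of the paper's standard-case argument. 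The paper proves it by mutating the sms of simple modules at the Nakayama-stable subset $\mathcal{X}=\{1_1,\dots,s_1\}$ of simples on the mouth of the tube, checking explicitly that $\mu_{\mathcal{X}}^+(\mathcal{S}_A)=\tau\circ H(\mathcal{S}_A)$, invoking Dugas's result that this mutation is realised by a standard derived equivalence $D^b(\mathrm{mod}\,B)\to D^b(\mathrm{mod}\,A)$, and then identifying $B\cong A$ via the bijection between $\mathrm{Conf}({}_s\Gamma_A)/\mathrm{Aut}({}_s\Gamma_A)$ and $\mathrm{StAlg}(A)$. Without some such argument (or an explicit appeal to Dugas's 2011 liftability theorem for $H$), your proof of the ``moreover'' clause is incomplete exactly where it matters.

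A secondary issue is the non-standard branch. The paper does not (and does not need to) lift the given stable equivalence $\phi$ itself; it only produces \emph{some} liftable StM sending $\mathcal{S}_B$ to $\mathcal{S}$. Concretely, it first takes a liftable StM $\phi_1:\stmod{B}\to\stmod{A}$ coming from the derived equivalence guaranteed by Theorem \ref{dclassRFS}, and then corrects $\phi_1(\mathcal{S}_B)$ to $\mathcal{S}$ by a liftable stable self-equivalence of $A$ built through the standard--non-standard correspondence (Lemma \ref{lem1}), which again requires the $H$-mutation argument. Your appeal to a blanket ``Dugas lifting theorem for stable equivalences between non-standard RFS algebras'' overstates what is available and is not how the paper proceeds; as written, that step is not justified.
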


As a consequence we get the second assertion in Theorem B:
\begin{Thm} \label{RFS-sms-smc-bijec} Let $A$ be an RFS
algebra over $k$ algebraically closed.  The map from smc$(A)$/DPic($A$) to sms$(A)$/StPic($A$) in
Theorem \ref{sms-smc} is a bijection. In particular, every sms
$\mathcal{S}$ of $A$ lifts to a Nakayama-stable smc of $D^b$(mod$A$).
\end{Thm}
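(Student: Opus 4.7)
The plan is to deduce this as a direct combination of the two previous theorems, with essentially no new work beyond bookkeeping. By Theorem \ref{sms-smc}(2), the map from $\mathrm{smc}(A)/\mathrm{DPic}(A)$ to $\mathrm{sms}(A)/\mathrm{StPic}(A)$ is already known to be injective, and the criterion for it to be a bijection is that every sms of $A$ is a liftable simple-image. So the whole task is to check this criterion in the RFS setting.

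First I would invoke Theorem \ref{lifting-theorem}: for every sms $\mathcal{S}$ of the RFS algebra $A$ there is a self-injective algebra $B$ and a StM $\phi:\stmod B\to \stmod A$ with $\phi(\mathcal{S}_B)=\mathcal{S}$ and, moreover, $\phi$ lifts to a (standard) derived equivalence $\widetilde{\phi}:D^b(\mathrm{mod}\, B)\to D^b(\mathrm{mod}\, A)$. This is exactly the definition of $\mathcal{S}$ being a liftable simple-image sms in the sense of Problem \ref{liftable-sms}. Hence the hypothesis of the ``if and only if'' clause in Theorem \ref{sms-smc}(2) is satisfied, and the map in question is a bijection.

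For the ``in particular'' statement, I would not appeal only to surjectivity abstractly but use the explicit construction already worked out in the proof of Theorem \ref{sms-smc}(1): given the lifted standard derived equivalence $\widetilde{\phi}$ above, the image $\mathbf{S}=\widetilde{\phi}(\mathcal{S}_B)$ of the simple $B$-modules is a Nakayama-stable smc of $D^b(\mathrm{mod}\,A)$, and by Rickard's compatibility between $\widetilde{\phi}$ and the induced stable functor $\phi$ one has $\eta_A(\mathbf{S})=\mathcal{S}$. Thus $\mathbf{S}$ is the desired lift of $\mathcal{S}$.

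The only real content of the argument is already contained in Theorem \ref{lifting-theorem}; the main obstacle is therefore not in this proof but in establishing that liftability of StM. Once that is in hand, the present statement is formal. It might be worth remarking in passing that, because of the extra ``of Morita type'' clause in Theorem \ref{lifting-theorem}, we simultaneously recover bijectivity of the left-hand map of Theorem A (cf.\ Theorem \ref{orbits-sms}), so that the RFS case indeed realises the best possible situation predicted by Theorem A.
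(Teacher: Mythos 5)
Your proposal is correct and follows essentially the same route as the paper: the paper's proof likewise reduces the statement via Theorem \ref{sms-smc} to showing every sms is a liftable simple-image, and then cites Theorems \ref{transsms} and \ref{lifting-theorem} for that fact. Your extra remarks (the explicit lift $\mathbf{S}=\widetilde{\phi}(\mathcal{S}_B)$ for the ``in particular'' clause, and the link to Corollary \ref{stronger-transsms}) are consistent with what the paper does elsewhere.
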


\begin{Proof} By Theorem \ref{sms-smc}, it is enough to show that every sms $\mathcal{S}$ of
$\stmod A$ is a liftable simple-image:  There exists an algebra $B$
and a StM $\overline{\phi}:\stmod B\rightarrow \stmod A$ such that
$\overline{\phi}$ sends simple $B$-modules onto $\mathcal{S}$ and
that $\overline{\phi}$ lifts to a derived equivalence $\phi: D^b$(mod$B)\rightarrow D^b$(mod$A$).
{But this} follows from Theorem \ref{transsms} and Theorem \ref{lifting-theorem}.
\end{Proof}

Using Theorem \ref{RFS-sms-smc-bijec}, we can also strengthen
Theorem \ref{transsms} solving the stronger version of simple-image
problem, and hence completing Theorem B:

\begin{Cor} \label{number-orbits-sms} Let $A$ be an RFS algebra.  The map
StMAlg$(A)\to$sms$(A)/$StPic$(A)$ constructed in Theorem
\ref{orbits-sms} is a bijection. In particular, the number of
Morita equivalence classes of algebras which are StM to $A$ is the
same as the number of the orbits of sms's of $\stmod A$ under the
action of the stable Picard group of $A$.
\end{Cor}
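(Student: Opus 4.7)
The plan is to reduce the corollary immediately to two earlier results, without any new argument. Theorem \ref{orbits-sms} constructs the map $\mathrm{StMAlg}(A)\to\mathrm{sms}(A)/\mathrm{StPic}(A)$, proves unconditional injectivity, and establishes the sharp criterion that this map is a bijection if and only if every sms of $A$ is simple-image of Morita type. So the proof of the corollary reduces entirely to verifying this criterion in the RFS setting.

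For that verification I would invoke Theorem \ref{lifting-theorem} directly: it asserts precisely that for an RFS algebra $A$ over an algebraically closed field, every sms $\mathcal{S}$ of $\stmod A$ is a simple-image of Morita type (in fact under a liftable StM, though here only the Morita-type property is needed). Feeding this into the bijectivity criterion of Theorem \ref{orbits-sms} produces surjectivity, and combined with injectivity one obtains that the map is a bijection. The ``in particular'' clause is then just the equality of cardinalities of the two sides, with no further work required.

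The substantive content is therefore contained in Theorem \ref{lifting-theorem}, and the corollary itself amounts to no more than a two-line citation argument. The only subtlety worth flagging is that Theorem \ref{transsms} on its own would be insufficient here: it produces only a stable equivalence, which is not a priori of Morita type, whereas the criterion in Theorem \ref{orbits-sms} specifically requires a StM. The Morita-type upgrade provided by Theorem \ref{lifting-theorem} is exactly what is needed to bridge this gap, and it is the real obstacle that has been absorbed into the earlier result.
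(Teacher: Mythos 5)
Your proposal is correct and matches the paper's (essentially unwritten) argument: the corollary is exactly the bijectivity criterion of Theorem \ref{orbits-sms} combined with the fact, supplied by Theorem \ref{lifting-theorem}, that every sms of an RFS algebra is simple-image of Morita type; the paper phrases this as a consequence of Theorem \ref{RFS-sms-smc-bijec}, but that theorem rests on the same lifting theorem, so the routes coincide in substance. Your remark that Theorem \ref{transsms} alone would not suffice (stable equivalence versus StM) is exactly the right subtlety to flag.
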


Now that the strong simple-image problem has been answered, the following is an immediate consequence of Theorem \ref{orbits-sms}:
\begin{Cor} \label{stronger-transsms}
Every sms $\mathcal{S}$ of an RFS algebra $A$ over $k$ algebraically closed is simple-image of Morita type under some StM $\phi:\stmod B\to \stmod A$,
where the algebra {$B$ is unique up to isomorphism.}
\end{Cor}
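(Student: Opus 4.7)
The plan is to deduce this essentially for free from Corollary \ref{number-orbits-sms}. That corollary, fed by Theorem \ref{transsms} and Theorem \ref{lifting-theorem}, already guarantees that every sms $\mathcal{S}$ of $A$ has the form $\phi(\mathcal{S}_B)$ for some StM $\phi:\stmod B\to\stmod A$, and Proposition \ref{unique-algebra} records that the algebra $B$ is uniquely determined by $\mathcal{S}$ up to Morita equivalence. Existence is therefore not the issue; the only new content of this corollary is the strengthening of ``up to Morita equivalence'' to ``up to isomorphism.''

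For the uniqueness step itself I would briefly rehearse the Linckelmann argument: given two pairs $(B,\phi)$ and $(B',\phi')$ with $\phi(\mathcal{S}_B) = \mathcal{S} = \phi'(\mathcal{S}_{B'})$, the composition $\phi^{-1}\phi'$ is a StM between the stable categories of $B'$ and $B$ that sends simples to simples, so by Linckelmann's theorem it lifts to a Morita equivalence $\mathrm{mod}\,B'\to\mathrm{mod}\,B$.

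To promote this to uniqueness up to isomorphism, I would invoke the standing convention stated at the start of Section 3: all RFS algebras are taken to be indecomposable and basic. Any algebra $B$ that is StM to the RFS algebra $A$ is again RFS (this is visible directly from Asashiba's classification, Theorem \ref{dclassRFS}), hence basic by convention. Two basic finite-dimensional $k$-algebras that are Morita equivalent must be isomorphic, so uniqueness up to Morita equivalence is automatically uniqueness up to isomorphism, and the corollary follows.

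I do not anticipate any real obstacle here: the corollary is a formal repackaging of Corollary \ref{number-orbits-sms} together with the basicness convention already in force, so the genuine work lives entirely in the preceding theorems rather than in this statement.
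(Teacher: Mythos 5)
Your proposal is correct and follows essentially the same route as the paper: the paper's own proof simply cites Theorem \ref{lifting-theorem} for existence and Proposition \ref{unique-algebra} for uniqueness up to Morita equivalence, with the upgrade to uniqueness up to isomorphism resting (implicitly) on the standing convention that RFS algebras are basic. Your version merely makes that last step explicit, which is a faithful filling-in of detail rather than a different argument.
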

\begin{Proof}
The uniqueness of $B$ follows from {Proposition \ref{unique-algebra}} once we have Theorem \ref{lifting-theorem}.
\end{Proof}

Combining Corollary \ref{stronger-transsms} with Theorem \ref{sms-smc} implies the following result which was not expected from the definition of sms's.

\begin{Cor} Let $A$ be an RFS algebra over $k$ algebraically closed. Then every sms $X_1,\cdots,X_r$ over $A$ is
Nakayama-stable, that is, the Nakayama functor $\nu_A$ permutes
$X_1,\cdots,X_r$.
\end{Cor}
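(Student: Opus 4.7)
The plan is to combine the two results indicated in the paper: Corollary \ref{stronger-transsms} tells us that every sms $\mathcal{S}=\{X_1,\ldots,X_r\}$ over $A$ is a liftable simple-image sms of Morita type; Theorem \ref{sms-smc}(1) then guarantees that such an sms admits a lifting to a Nakayama-stable smc in $D^b(\mathrm{mod}\,A)$. Once we have this lifting, Nakayama-stability in the stable category should be a formal consequence of Nakayama-stability in the derived category, because the quotient functor $\eta_A:D^b(\mathrm{mod}\,A)\to \stmod A$ is compatible with the Nakayama functor.

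First I would invoke Corollary \ref{stronger-transsms} to obtain an RFS algebra $B$ and a liftable StM $\phi:\stmod B \to \stmod A$ with $\phi(\mathcal{S}_B)=\mathcal{S}$. Then, by Theorem \ref{sms-smc}(1), the standard derived equivalence $\Phi:D^b(\mathrm{mod}\,B)\to D^b(\mathrm{mod}\,A)$ lifting $\phi$ sends the simple $B$-modules to a Nakayama-stable smc $\mathbf{S}=\{Y_1,\ldots,Y_r\}$ in $D^b(\mathrm{mod}\,A)$, with $\eta_A(Y_i)\cong X_i$ after reindexing. Nakayama-stability of $\mathbf{S}$ gives a permutation $\sigma\in \Sn_r$ with $\nu_A(Y_i)\cong Y_{\sigma(i)}$ in $D^b(\mathrm{mod}\,A)$.

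Next I would argue that $\eta_A$ intertwines the two incarnations of the Nakayama functor. For a self-injective algebra $A$, the Nakayama functor $\nu_A=\mathrm{Hom}_k(A,k)\otimes_A-$ on $\mathrm{mod}\,A$ is exact and sends projectives to projectives, so it descends consistently to both $\stmod A$ and $D^b(\mathrm{mod}\,A)$ and fits into a commutative square with $\eta_A$. Applying this to the identities $\nu_A(Y_i)\cong Y_{\sigma(i)}$ yields $\nu_A(X_i)\cong \nu_A(\eta_A(Y_i))\cong \eta_A(\nu_A(Y_i))\cong \eta_A(Y_{\sigma(i)})\cong X_{\sigma(i)}$, so $\nu_A$ permutes $\{X_1,\ldots,X_r\}$.

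The main (minor) point to verify is simply that the natural isomorphism $\eta_A\circ \nu_A \cong \nu_A\circ \eta_A$ really holds at the level of triangulated categories, which is immediate from exactness of $\nu_A$ and the fact that $\eta_A$ is a Verdier quotient by $K^b(\mathrm{proj}\,A)$, a subcategory stable under $\nu_A$. Beyond that, the proof is a formal composition of the preceding results, with no representation-theoretic input beyond what was already used to establish Corollary \ref{stronger-transsms}.
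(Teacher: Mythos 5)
Your proposal is correct and follows exactly the paper's route: the paper's (one-line) proof also just combines Corollary \ref{stronger-transsms} with Theorem \ref{sms-smc}(1) to lift $\mathcal{S}$ to a Nakayama-stable smc and then descends Nakayama-stability through $\eta_A$. You merely make explicit the (correct, standard) compatibility $\eta_A\circ\nu_A\cong\nu_A\circ\eta_A$ that the paper leaves implicit.
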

\begin{Proof} An sms
$\mathcal{S}=\{X_1,\cdots,X_r\}$ over an RFS algebra $A$ can be lifted to a
Nakayama-stable smc of $D^b$(mod$A$).
\end{Proof}

In \cite[Section 6]{KL}, the following question has been posed: Is the cardinality of each sms over an artin algebra $A$
equal to the number of non-isomorphic non-projective
simple $A$-modules? A positive answer of this question implies the
Auslander-Reiten conjecture for any stable equivalence related to $A$. We answer
this question positively for RFS algebras.

\begin{Cor} Let $A$ be an RFS algebra over $k$ algebraically closed.
Then the cardinality of each sms over $A$ is equal to the number of
non-isomorphic simple $A$-modules.
\end{Cor}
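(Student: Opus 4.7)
The plan is to deduce the statement directly from the preceding corollaries, since essentially all the work has been done by the time we reach this point. First I would apply Corollary \ref{stronger-transsms} to the given sms $\mathcal{S}$: there exists an RFS algebra $B$ and a StM $\phi: \stmod B \to \stmod A$ with $\phi(\mathcal{S}_B) = \mathcal{S}$, where $\mathcal{S}_B$ denotes the set of isoclasses of simple $B$-modules. Since $\phi$ is a $k$-linear equivalence of categories, it restricts to a bijection between $\mathcal{S}_B$ and $\mathcal{S}$, so $|\mathcal{S}|$ equals the number of non-isomorphic simple $B$-modules.

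Next I would use the fact that $A$ and $B$, both being RFS, have the same number of simple modules because they are stably equivalent. There are two routes: one can invoke Theorem \ref{lifting-theorem}, which guarantees that $\phi$ lifts to a (standard) derived equivalence $D^b(\mathrm{mod}\,B) \to D^b(\mathrm{mod}\,A)$; since derived equivalences induce isomorphisms of Grothendieck groups, the rank of $K_0$ is preserved, so $B$ and $A$ have the same number of simple modules. Alternatively, one can appeal directly to Asashiba's classification (Theorem \ref{dclassRFS}): stably equivalent RFS algebras (both standard or both non-standard) have the same type $(Q,f,t)$, and the number of simples, equal to $nf$ with $n$ the number of vertices of $Q$, depends only on the type.

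Combining these two observations yields $|\mathcal{S}| = |\mathcal{S}_B| = |\mathcal{S}_A|$, which is exactly the claim. There is no real obstacle here; the only subtle point to flag is that the algebra $B$ produced by Corollary \ref{stronger-transsms} is itself an RFS algebra (rather than a merely self-injective one), which is part of the content of Theorem \ref{transsms} and ensures that Asashiba's classification or the lifting theorem can be applied. The resulting proof is just a couple of lines and requires no calculation.
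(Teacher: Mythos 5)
Your argument is correct and is essentially the paper's own (first) proof: both reduce to the fact that, via Theorem \ref{lifting-theorem} (packaged as Corollary \ref{stronger-transsms} resp.\ Theorem \ref{RFS-sms-smc-bijec}), the given sms is the image of the simples of an RFS algebra under a stable equivalence lifting to a derived equivalence, and then invoke invariance of the rank of the Grothendieck group. The paper phrases this as ``every sms lifts to a Nakayama-stable smc, whose cardinality equals the number of simples by Rickard's/Al-Nofayee's result'' and also notes a second proof via configurations, but the substance matches yours.
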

\begin{Proof} By Theorem \ref{RFS-sms-smc-bijec}, every sms $\mathcal{S}$ of $\stmod A$ lifts to a
Nakayama-stable smc of $D^b$(mod$A$), and the cardinality of a
Nakayama-stable smc must be equal to the number of (isoclasses of)
simple modules by Rickard's or Al-Nofayee's result (cf. the proof of
Theorem \ref{sms-smc}).

Alternatively, using Theorem \ref{sms-configuration}, all sms's of
$A$ correspond to configurations, which are all finite and have the
same cardinality, equal to the number of isoclasses of simple
$A$-modules.
\end{Proof}

Validity of the Auslander-Reiten conjecture in this case first has been shown in \cite{Riedtmann1}. By results
of Martinez-Villa \cite{Martinez-Villa} the conjecture is valid for all representation finite algebras.

\subsection{Proof of Theorem \ref{lifting-theorem}}
It remains to prove Theorem \ref{lifting-theorem}. The proof will
occupy the rest of this section. It will be subdivided in a first part dealing with standard RFS algebras,
and a second part dealing with the non-standard case.
\medskip

{\it The standard case.}
\smallskip

For standard RFS algebras, Asashiba \cite{Asashiba2003}
already solved this problem in most, but not all cases. We first
recall Asashiba's description of stable Picard groups for standard
RFS algebras.

\begin{Thm} \label{stable-Picard-group-RFS} {\rm(\cite{Asashiba2003})}
Let $A$ be a standard RFS algebra.
If $A$ is not of type $(D_{3m},s/3,1)$ with $m\geq 2,3\nmid s$, then
$$StPic(A)=Pic'(A)\langle[\Omega_A]\rangle.$$
If $A$ is of type $(D_{3m},s/3,1)$ with $m\geq 2,3\nmid s$, then
$$StPic(A)=(Pic'(A)\langle[\Omega_A]\rangle)\cup(Pic'(A)\langle[\Omega_A]\rangle)[H],$$
where $H$ is a stable self-equivalence of $A$ induced from an
automorphism of the quiver $D_{3m}$ by swapping the two high
vertices; it satisfies $[H]^2\in Pic'(A)$.
\end{Thm}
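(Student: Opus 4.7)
The plan is to use standardness of $A$ to translate the problem about stable self-equivalences into a combinatorial problem about automorphisms of the stable AR-quiver $_s\Gamma_A = \mathbb{Z}Q/\Pi$. Since $A$ is standard, $\underline{\mathrm{ind}}\,A \simeq k(_s\Gamma_A)$ by the discussion preceding Theorem \ref{sms-configuration}, so any $[\phi]\in \mathrm{StPic}(A)$ induces a translation-quiver automorphism of $_s\Gamma_A$; conversely, any such automorphism induces a self-equivalence of $k(_s\Gamma_A)$, hence of $\underline{\mathrm{ind}}\,A$, hence of $\stmod A$. This gives a group homomorphism $\Psi: \mathrm{StPic}(A) \to \mathrm{Aut}(_s\Gamma_A)$ whose kernel one identifies with stable self-equivalences fixing every indecomposable up to isomorphism, and which one checks to sit inside $Pic'(A)$.

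Next I would use the standard description $\mathrm{Aut}(\mathbb{Z}Q/\Pi) = N_{\mathrm{Aut}(\mathbb{Z}Q)}(\Pi)/\Pi$ together with the decomposition $\mathrm{Aut}(\mathbb{Z}Q) = \mathrm{Aut}(Q) \ltimes \langle \tau \rangle$ to understand the image of $\Psi$. The Auslander--Reiten formula $\tau_A = \Omega_A^2 \nu_A$ for self-injective $A$, together with $\nu_A \in Pic'(A)$, shows that all powers of $\tau$ lie inside $Pic'(A)\langle[\Omega_A]\rangle$. Moreover, any quiver automorphism of $Q$ that extends to an automorphism of the full AR-quiver $\Gamma_A$ (i.e.\ that preserves the set of projective vertices $\mathcal{C}$) is realised by a Morita self-equivalence of $A$, hence also lies in $Pic'(A)$.

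The core of the argument is then a case analysis over the types listed in Theorem \ref{dclassRFS}. For each $(Q,f,t)$ other than $(D_{3m},s/3,1)$ with $3 \nmid s$, one checks that the $\Pi$-normaliser in $\mathrm{Aut}(\mathbb{Z}Q)$ is generated modulo $\Pi$ by $\tau$ and by quiver automorphisms of $Q$ that preserve $\mathcal{C}$; hence $\mathrm{StPic}(A) = Pic'(A)\langle [\Omega_A]\rangle$. The exceptional case is $(D_{3m},s/3,1)$ with $3\nmid s$: the fractional frequency $s/3$ forces the order-$2$ quiver automorphism of $D_{3m}$ swapping the two fork vertices to \emph{fail} to preserve $\Pi$, yet a suitable composition of it with a power of $\tau$ does lie in the normaliser of $\Pi$. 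This produces the extra $\mathbb{Z}/2$-coset represented by $[H]$; since the underlying quiver swap has order $2$, $[H]^2$ differs from the identity by a composition of $\tau$-powers and Morita automorphisms, giving $[H]^2 \in Pic'(A)$.

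The main obstacle will be upgrading $H$ from an abstract linear self-equivalence (coming out of the translation-quiver symmetry) to a genuine stable equivalence of Morita type, i.e.\ exhibiting the pair of projective bimodules realising it. The combinatorial reductions above produce $H$ only as a self-equivalence of $\underline{\mathrm{ind}}\,A$; verifying Broué's bimodule axioms requires a concrete construction, essentially via a two-sided tilting complex arising from the Brauer-tree/covering description of the $(D_{3m},1/3,1)$ family (cf.\ \cite{Waschbusch1981}) and transport along the derived equivalences classifying the $s/3$-family. This is the delicate step: the pure quiver-theoretic argument rules out any further generators of $\mathrm{StPic}(A)$, but does not by itself produce the bimodule witnessing that $H$ is genuinely StM, and it is precisely here that Asashiba's explicit constructions in \cite{Asashiba2003} are needed.
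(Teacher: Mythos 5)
The paper does not prove Theorem \ref{stable-Picard-group-RFS} at all: it is quoted as Asashiba's result, with the label $(\cite{Asashiba2003})$, and the authors rely on it as input rather than reproving it. So there is no in-paper argument to compare yours with; what can be assessed is whether your sketch would actually reconstruct Asashiba's computation. The overall frame is the right one (standardness gives $\underline{\mathrm{ind}}A\simeq k({}_s\Gamma_A)$, so the problem reduces to $\mathrm{Aut}({}_s\Gamma_A)=N_{\mathrm{Aut}(\mathbb{Z}Q)}(\Pi)/\Pi$ together with a kernel analysis, and $\tau=\nu_A\Omega_A^2$ with $\nu_A\in Pic'(A)$ absorbs all $\tau$-powers), and you correctly flag that producing $H$ as an actual stable equivalence of Morita type is a separate, nontrivial step.

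However, two of your intermediate claims are wrong or seriously underdeveloped. First, your explanation of why $(D_{3m},s/3,1)$ is exceptional does not hold: for these types the torsion order is $1$, so $\Pi=\langle\tau^{-s(2m-1)}\rangle$, and since $\mathrm{Aut}(\mathbb{Z}D_{3m})\simeq \mathrm{Aut}(D_{3m})\times\langle\tau\rangle$, the swap of the two high vertices \emph{centralises} $\Pi$ — it does not ``fail to preserve $\Pi$''. The swap induces an automorphism of ${}_s\Gamma_A$ for every $D$-type algebra; what singles out $(D_{3m},s/3,1)$ is that only there does this automorphism fail to lie in the subgroup generated by the configuration-preserving automorphisms and the automorphism induced by $\Omega_A$. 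Deciding this requires knowing, type by type, how $\Omega_A$ acts on ${}_s\Gamma_A$ (it is of the form $\zeta\tau^{-k}$ with $\zeta$ a possibly nontrivial tree automorphism depending on $Q$) and how it interacts with the configuration; your sketch never engages with this, yet it is the entire content of the case analysis. Second, the assertion that the kernel of $\Psi:\mathrm{StPic}(A)\to\mathrm{Aut}({}_s\Gamma_A)$ ``sits inside $Pic'(A)$'' is precisely the delicate point: Example \ref{determine-by-object-shifted} of the paper shows that $\Omega_A^{2m-1}$ can fix every indecomposable object while not being isomorphic to the identity functor, so a self-equivalence acting trivially on objects need not be trivial, and identifying all such with (twists by) algebra automorphisms uses the periodicity and explicit presentations of RFS algebras, not just the covering formalism. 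As it stands the proposal is a plausible outline of Asashiba's strategy but does not constitute a proof, and its stated reason for the appearance of the extra coset $[H]$ is incorrect.
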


\begin{Rem}
See \cite{Riedtmann4} and \cite{BLR} for an explanation of the concept of
high vertices.
Note that the stable Picard group here, by definition, contains
\textit{all} stable self-equivalences, rather than as usual
only the stable self-equivalences {of Morita type}. From
the description below, in the standard case all stable self-equivalences are of Morita type. So the choice of another, possibly different,
stable Picard group does not matter here.
\end{Rem}

By the description in Theorem
\ref{stable-Picard-group-RFS}, if a standard RFS algebra $A$ is not of type $(D_{3m},s/3,1)$ with $m\geq 2,3\nmid s$, then every stable self-equivalence
over $A$ is of Morita type. If $A$ is of type $(D_{3m},s/3,1)$ with $m\geq 2,3\nmid s$, then
every stable self-equivalence over $A$ can be
determined by the image of objects of $\underline{\mathrm{ind}}A$, up to some Morita equivalence which
induces the identity map on the objects of $\underline{\mathrm{ind}}A$
(c.f. description of stable AR-quiver of type $(D_{3m},1/3,1)$ after Example \ref{Example}).
Examples of such Morita equivalences include $H^2$, and the following:

\begin{Ex}\label{determine-by-object-shifted}
Let $A$ be an RFS algebra of type $(D_{3m},1/3,1)$ with $m\geq 2$
and char $k \neq 2$. Then $\Omega^{2m-1}$ fixes all indecomposable
objects and $2m-1$ is the smallest positive exponent of the Heller
shift for which this works. But $\Omega^{2m-1}$ is not naturally
isomorphic to the identity functor on the stable module category.
For getting that, the smallest positive exponent of the Heller shift
is $2(2m-1)$. When char $k=2$, we do have $\Omega^{2m-1}\simeq \mathrm{id}_{\stmod A}$.
\end{Ex}

\begin{Rem}\label{determine-by-object-remark}
(1) Similar argument works for type $(D_{3m},s/3,1)$ with $m\geq 2, 3\nmid s$. Note that for this family of RFS
algebras, a general stable self-equivalence has the form $\phi = F$
or $\phi = FH$ where $F\in Pic'(A)\langle[\Omega_A]\rangle$. If
$\phi$ can be lifted to a standard derived equivalence, then $H$ can
be lifted as well, since $\Omega$ is lifted to the inverse
suspension functor $[-1]$ on the derived category.

(2) There may not exist any $d>0$ with $\Omega^d\simeq \mathrm{id}_{\stmod A}$. Precise information when this does (or does
not) occur, and for which $d$, can be found in \cite[Theorem 6.1]{Dugas}.
\end{Rem}

One important application of Asashiba's Theorem
\ref{stable-Picard-group-RFS} is that we can now lift any stable
equivalence between two RFS algebras as long as they are not of type
$(D_{3m},s/3,1)$ with $m\geq2$ and $3\nmid s$ \cite[Main
Theorem]{Asashiba2003}.  The reason why this result did not cover
the type $(D_{3m},s/3,1)$ with $m\geq 2,3\nmid s$ is that the
liftability of stable self-equivalence $H$ was not known until a
recent proof in \cite{Dugas2011}. In the following, we will use
another result of Dugas in \cite{Dugas2012} concerning mutation of
sms's to prove that $H$ lifts indeed to a standard derived
equivalence.  This extends the main theorem of \cite{Asashiba2003}
to all standard RFS algebras, and consequently allowing us to prove
Theorem \ref{lifting-theorem} in the standard case.  We start by
recalling the mutation of sms from \cite{Dugas2012}. Our definition
here is a variation of Dugas's original one by shifting the objects
by $\Omega^{\pm 1}$, so that the mutations ``align" with the
mutation for smc defined in \cite{KY} (see \cite{Dugas2012} Remark
under Definition 4.1, \cite{KY} and Section 5 for more details). We
restrict to the stable category of a self-injective algebra,
although the original definition works for more general triangulated
categories.  For the definitions of left/right approximations see
for example \cite{Aihara2012, AI, KY, Dugas}.

\begin{Def} \label{mutation-sms} {\rm(\cite[Definition 4.1 and Remark]{Dugas2012})} Let $A$ be a finite-dimensional self-injective
algebra and $\mathcal{S} = \{X_1,\ldots, X_r\}$ an sms of $A$.
Suppose that $\mathcal{X}\subseteq \mathcal{S}$ is a Nakayama-stable
subset: $\nu_A(\mathcal{X}) =\mathcal{X}$. Denote by $\mathcal{F(X)}$ the
smallest extension-closed subcategory of $\stmod A$ containing
$\mathcal{X}$.
 The left mutation of the sms $\mathcal{S}$ with respect to $\mathcal{X}$ is
the set
  $\mu_{\mathcal{X}}^+(\mathcal{S}) = \{Y_1,\ldots,Y_r\}$ such that
\begin{enumerate}
\item $Y_j=\Omega^{-1}(X_j)$, if $X_j\in \mathcal{X}$
\item Otherwise, $Y_j$ is defined by the following distinguished triangle
$$\Omega(X_j)\rightarrow X \rightarrow Y_j,$$ where the first map is a minimal left $\mathcal{F(X)}$-approximation
of $\Omega(X_j)$.
\end{enumerate}
The right mutation $\mu_{\mathcal{X}}^-(\mathcal{S})$ of
$\mathcal{S}$ is defined similarly.
\end{Def}

It has been shown in \cite{Dugas2012} that the above defined sets
$\mu_{\mathcal{X}}^+(\mathcal{S})$ and
$\mu_{\mathcal{X}}^-(\mathcal{S})$ are again sms's. This definition
works for all self-injective algebras as long as
$\nu(\mathcal{X})=\mathcal{X}$, which is automatically true for
weakly symmetric algebras.  Mutation of sms is designed to keep
track of the images of simple modules (which form an sms) under (liftable) StM. It is interesting to ask if all sms's can be obtained just by mutations;
{this will be considered
in Section 5.}

\begin{Ex}\label{Example}
Let $A$ be a symmetric Nakayama algebra with 4 simples and Loewy
length 5. The canonical sms is the set of simple $A$-modules
$\{1,2,3,4\}$. The left mutation of $\mathcal{S}$ at
$\mathcal{X}=\{2,3\}$ is
\[
\mu_{2,3}^+(1,2,3,4) = \{\begin{array}{ccc} 1 \\
 2 \\ 3
\end{array}, \begin{array}{cccc} 2 \\
 3\\ 4\\ 1\end{array}, \begin{array}{cccc} 3 \\
 4\\ 1\\ 2\end{array}, \begin{array}{c} 4
\end{array}\}.
\]
\end{Ex}

Now let $A$ be a standard RFS algebra of type $(D_{3m},s/3,1)$ with
$m\geq 2,3\nmid s$. Without loss of generality, we
may assume that $A$ is the algebra representing this class, which has been given by Asashiba
\cite[Appendix 2]{Asashiba2003}, whose quiver is given in Figures
$Q(D_{3m}, s/3)$ below.
When $s = 1$, the stable AR-quiver
$_s\Gamma_A=\mathbb{Z}D_{3m}/\langle\tau^{(2m-1)}\rangle$ is given
by connecting $(2m-1)$ copies of $D_{3m}$. The position of the
indecomposable $A$-modules on $_s\Gamma_A$ can be found in
Waschb\"{u}sch \cite{Waschbusch1981}. The $m-1$ simple
modules lie on the mouth (boundary) of the stable tube; and the
remaining one lies in a high vertex (using terminology of Riedtmann
\cite{Riedtmann4} and BLR \cite{BLR}). When $s > 1$, the stable
AR-quiver $_s\Gamma_A=\mathbb{Z}D_{3m}/\langle\tau^{(2m-1)s}\rangle$
is given by connecting $s$ copies of stable AR-quiver of that in
case $s = 1$. Explicit calculations demonstrate the following observation on the simple
$A$-modules:

\begin{enumerate}
\item The vertices in the inner circle (loop path $\beta_s\cdots\beta_1$)
correspond to simple modules in the high vertex of the stable
AR-quiver, with $\tau^{(2m-1)}$ of such a simple being another such
simple. We label these vertices by $v_1,\cdots, v_s$, which can be
thought of as ramification of the vertex $v_1$ in the $s = 1$
case, see Figure $(D_{3m},1/3)$.
\item Let $i\in \{1,\cdots,s\}$, and consider vertices on the path
$\alpha^{(i)}_{m-1}\cdots\alpha^{(i)}_{2}$. There are $m-1$ such
vertices for each $i$, and we label these by $i_1,\cdots,i_{m-1}$.
The corresponding indecomposable projective modules are uniserial,
and the corresponding $m-1$ simple modules lie on the mouth of
$i$-th copy of stable AR-quiver (in the same way as in the $s = 1$
case).
\item The Nakayama functor permutes the simple $A$-modules as follows:
$$v_i \mapsto v_{i+3}$$ $$i_j\mapsto (i+3)_j$$
for all $i\in \{1,\cdots,s\}$ (where we think of $1-1 = s$) and all
$j\in \{1,\cdots,m-1\}$.
\end{enumerate}
\bigskip
$$
\vcenter{ \xymatrix@!R=0.2pc{ & 1_{m-1} \ar[ld]_{\al_m}&  \ar[l]_{\al_{m-1}} \ar@{{}*{\cdot}{}}[r] & \ar@/^1pc/@{{}*{\cdot}{}}[dd] \\
v_1 \ar[rd]_{\al_1} \ar@(ul,dl)[]_{\be}&&&&\\
& 1_1 \ar[r]_{\al_2}& ~ \ar@{{}*{\cdot}{}}[r] & }}
$$
\medskip
$$\mbox{Figure }Q(D_{3m}, 1/3)$$

\bigskip
$$ \vcenter{
\xymatrix@!C=0.1pc@!R=0.1pc@M0mm{
&&&&&\ar[lldd]_(0.1){\al_{m-1}^{(s)}}\ar@{{}*{\cdot}{}}[rr]&& \\
&&&\ar@{{}*{\cdot}{}}[lldd]&&
      \circ \ar[ll]_(0.8){\al_2^{(1)}}&&\circ \ar[ld]^{\al_m^{(s-1)}} &&
     \ar[ll]_(0.3){\al_{m-1}^{(s-1)}}\ar@{{}*{\cdot}{}}[rd] \\
&&& \circ \ar[d]^{\al_m^{(s)}}&&& \circ \ar[llld]^{\be_1}\ar[lu]_{\al_1^{(1)}} &&&
    \circ \ar[lluu]_(0.8){\al_2^{(s)}}&\\
&\ar[dd]_(0.3){\al_{m-1}^{(1)}}&\circ \ar[lldd]_(0.85){\al_2^{(2)}} &
     \circ \ar[l]_{\al_1^{(2)}}\ar[lddd]^{\be_2}&&&&&&
     \circ \ar[lllu]^{\be_s}\ar[u]^{\al_1^{(s)}}& \circ \ar[l]_{\al_m^{(s-2)}}&\\
&&&&&&&&&&\ar[lu]^{\be_{s-1}}\ar@<0.3ex>@{{}*{\cdot}{}}[d] &
      \ar[lu]_{\al_{m-1}^{(s-2)}}\ar@{{}*{\cdot}{}}[rd]\\
\ar@{{}*{\cdot}{}}[dd]& \circ \ar[rd]^{\al_m^{(1)}}&&&&&&&&&&\ar@{{}*{\cdot}{}}[dd]
      &\ar@{{}*{\cdot}{}}[lu]\\
&&\circ \ar[ld]^{\al_1^{(3)}}\ar[rddd]^{\be_3}&&&&&&&&&&&\\
\ar[rrdd]_(0.15){\al_{m-1}^{(2)}}&\circ \ar[dd]_(0.7){\al_2^{(3)}}&&&&&&&&&&&\\
&&&&&&&&&&\ar@<-0.3ex>@{{}*{\cdot}{}}[u]&\ar@{{}*{\cdot}{}}[ru]&\\
&\ar@{{}*{\cdot}{}}[rrdd]&\circ \ar[r]^{\al_m^{(2)}} &
     \circ \ar[d]_{\al_1^{(4)}}\ar[rrrd]^{\be_4}&&&&&&
     \circ \ar[ru]^{\be_6}\ar[r]_{\al_1^{(6)}}&\circ \ar[ru]_{\al_2^{(6)}} & \\
&&&\circ \ar[rrdd]_(0.8){\al_2^{(4)}}&&&\circ \ar[rrru]^{\be_5}\ar[rd]^{\al_1^{(5)}}
     &&&\circ \ar[u]^{\al_m^{(4)}}&\\
&&&\ar[rr]_(0.3){\al_{m-1}^{(3)}}&&\circ \ar[ru]^{\al_m^{(3)}}&&
     \circ \ar[rr]_(0.7){\al_2^{(5)}}&&\ar@{{}*{\cdot}{}}[ru]\\
&&&&&\ar@{{}*{\cdot}{}}[rr]&&\ar[rruu]_(0.2){\al_{m-1}^{(4)}}
}}
$$
\medskip
$$\mbox{Figure }(Q(D_{3m}, s/3), s\geq 2)$$
\medskip

We now mutate the sms $\mathcal{S}$ of simple $A$-modules at the
Nakayama-stable subset $\mathcal{X}=\{1_1,\cdots,s_{1}\}$. The above
observation implies that the left mutation
$\mu_{\mathcal{X}}^+(\mathcal{S}_A)$ has the same effect as the
composition $\tau\circ H$ of the stable self-equivalence $H$ (cf.
Theorem \ref{stable-Picard-group-RFS}) and $\tau$ on
$\mathcal{S}_A$, where $\tau=\nu_A\circ{\Omega_A}^2$ is the
AR-translation. According to
 Dugas \cite[Section 5]{Dugas2012}, $\mu_{\mathcal{X}}^+(\mathcal{S}_A)$ can be
realised by a derived equivalence $\phi:D^b$(mod$B)\rightarrow
D^b$(mod$A$) for some standard RFS algebra $B$ with the same type.
Using the same argument as in the proof of Theorem \ref{sms-smc}, we
can assume that $\phi$ is a standard derived equivalence. Then
$\phi$ induces a StM $\overline{\phi}:\stmod B\rightarrow \stmod A$
such that $\overline\phi$ sends simple $B$-modules to
$\mu_{\mathcal{X}}^+(\mathcal{S}_A)$, which coincides with the image
of $\tau\circ H$ on $\mathcal{S}_A$. Next we show that the algebra
$B$ is isomorphic to $A$, and therefore $\overline{\phi}$ can be
identified as a stable self-equivalence over $A$. In fact, by
Riedtmann's classification on RFS algebras, there is a bijection
between Conf$(_s\Gamma_A)/Aut(_s\Gamma_A)$ and StAlg$(A)$ as in
proof and remark of Proposition
\ref{transitive-stable-self-equivalence}.  Since
$\overline{\phi}(\mathcal{S}_B)=\tau\circ H(\mathcal{S}_A)$ is in
the same orbit as $\mathcal{S}_A$ in
Conf$(_s\Gamma_A)/Aut(_s\Gamma_A)$, we have $[B]=[A]$ in StAlg$(A)$,
therefore $B$ is isomorphic to $A$. {Since $\overline\phi$ can be
lifted to a standard derived equivalence, it follows from our
previous discussion in Example \ref{determine-by-object-shifted} and
Remark \ref{determine-by-object-remark} that $H$ can also be lifted,
and hence it is a liftable StM.}

The above result has been proved by Dugas \cite[Section 5]{Dugas2011} at least for the case $s=1$. Our proof here is carried
out in the same spirit as his, but with the point of view focussing
on configurations which clarifies the ``covering technique"
mentioned in Dugas' article when he generalises the result to $s>1$
case.  In particular, we avoid calculating explicitly the algebra
$B$ which was the approach used in \textit{loc. cit.}  We have
finished the proof of Theorem \ref{lifting-theorem} in the standard
case.
\medskip

{\it The non-standard case.}
\smallskip
Now we prove Theorem \ref{lifting-theorem} in the
non-standard case. Let $A$ be a non-standard RFS algebra of type
$(D_{3m},1/3,1)$ and let $A_s$ be its standard counterpart. First we
recall some facts:
\begin{enumerate}
\item (standard-non-standard correspondence): There is a bijection ind($A$) $\leftrightarrow$ ind($A_s$)
between the set of indecomposable objects and irreducible morphisms, which is compatible with the position on the stable AR-quiver
$\Gamma = \Z D_{3m}/\langle \tau^{2m-1}\rangle$. More precisely, by Waschb\"{u}sch \cite{Waschbusch1981},
the AR-quiver of $A$ is obtained from that of $A_s$ by replacing every part of the Loewy diagram

$$
\xymatrix@C=0.4pc@R=0.1pc@M=1mm{
1_m & & & 1_m \ar@{-}[ld] \ar@{-}[dd] \\
v_1 & \text{to} & v_1 \ar@{-}[rd] & \\
v_1 & & & v_1 \\
1_1 & & & 1_1}
$$

\item There is one-to-one correspondence between the following three
sets:
$$
sms(A) \leftrightarrow Conf(\Gamma) \leftrightarrow sms(A_s)
$$
where the first is the set of sms's of $A$, the second is the set of
configurations of $\Gamma$, and the third is sms's of $A_s$.
\item If $B$ is another non-standard RFS algebra of type
$(D_{3m},1/3,1)$, then there is a liftable StM $\phi:\stmod{A}\to\stmod{B}$
(see Theorem \ref{dclassRFS}).
\end{enumerate}

Therefore, by $(3)$, we can assume $A$ is the representative of the class of algebras
of type $(D_{3m},1/3,1)$, whose quiver is also given in Figure $Q(D_{3m}, 1/3)$.

\begin{Lem}\label{lem1}
Every stable self-equivalence $\phi_s\in StPic(A_s)$ has a non-standard counterpart $\phi \in StPic(A)$ such that,
if $\phi_s$ maps the set $\mathcal{S}_{A_s}$ of simple $A_s$-modules to $\mathcal{S}_s$, then $\phi(\mathcal{S}_A)=\mathcal{S}$ where $\mathcal{S}$
corresponds to $\mathcal{S}_s$ in the above correspondence.  Moreover, $\phi$ is a liftable StM.
\end{Lem}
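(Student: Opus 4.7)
The plan is to decompose $\phi_s$ via Asashiba's description of $\mathrm{StPic}(A_s)$ recalled in Theorem \ref{stable-Picard-group-RFS}: every element factors as a composition of powers of $\Omega_{A_s}$, elements of $\mathrm{Pic}'(A_s)$ coming from Morita self-equivalences, and at most one copy of the involution $H$ induced from the quiver automorphism of $D_{3m}$ that swaps the two high vertices. For each such generator I will construct a non-standard counterpart on $A$ by realising it as coming from the same automorphism of the stable Auslander-Reiten quiver $\Gamma=\mathbb{Z}D_{3m}/\langle\tau^{2m-1}\rangle$. Concretely, replace $\Omega_{A_s}$ by $\Omega_A$; for the Morita-type pieces and for $H$, use the identification $k({}_s\Gamma_A)/J\simeq \underline{\mathrm{ind}}\,A$ from Section 3 (with $J$ Waschb\"{u}sch's modified mesh ideal) to verify that the relevant quiver automorphism, being a symmetry of $\Gamma$, is compatible with $J$ and therefore induces a $k$-linear self-equivalence of $\underline{\mathrm{ind}}\,A$ and hence an element $\phi\in\mathrm{StPic}(A)$.

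To verify the equality $\phi(\mathcal{S}_A)=\mathcal{S}$, I would use the three-way bijection $\mathrm{sms}(A)\leftrightarrow \mathrm{Conf}(\Gamma)\leftrightarrow \mathrm{sms}(A_s)$ recorded just before the lemma, together with the fact from item (1) that the standard/non-standard correspondence places the simples of $A$ and of $A_s$ at the same vertices of $\Gamma$. Since each generator of $\phi$ was constructed to induce on $\Gamma$ exactly the same automorphism as the corresponding generator of $\phi_s$, the two sms's $\phi(\mathcal{S}_A)$ and $\phi_s(\mathcal{S}_{A_s})=\mathcal{S}_s$ determine the same configuration in $\mathrm{Conf}(\Gamma)$, which by the bijection of item (2) is precisely $\mathcal{S}$.

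For the liftability claim I will handle each generator separately and invoke that a composition of liftable StMs is again a liftable StM. The Heller functor $\Omega_A$ lifts to the inverse suspension $[-1]$ on $D^b(\mathrm{mod}\,A)$, and Morita self-equivalences lift trivially to standard derived equivalences. The main obstacle, and the only place where genuine work is needed, is the non-standard version of $H$: here I will appeal to Dugas's theorem \cite{Dugas2011}, which treats precisely the high-vertex-swap for type $(D_{3m},1/3,1)$ in both the standard and the non-standard case and produces a standard derived equivalence lifting $H$. Combining these yields that the composite $\phi$ is a liftable StM, completing the proof.
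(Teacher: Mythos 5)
Your overall strategy coincides with the paper's: decompose $\phi_s$ according to Asashiba's description $\mathrm{StPic}(A_s)=\mathrm{Pic}'(A_s)\langle[\Omega_{A_s}]\rangle[H]$ and produce a counterpart on $A$ generator by generator, with $\Omega_{A_s}^n\mapsto\Omega_A^n$ handled by the standard/non-standard correspondence. You diverge from the paper in two places. First, for the $\mathrm{Pic}'(A_s)$ part the paper does not construct any counterpart from a quiver automorphism: it observes that such a $\phi_s$ permutes the $m-1$ simples on the mouth and fixes the one at the high vertex, hence acts as the identity on $\mathrm{Conf}(\Gamma)$, so one may simply take $\phi=\mathrm{id}$. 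Your route of inducing a self-equivalence of $\underline{\mathrm{ind}}A$ from the quiver automorphism and then asserting it lifts ``because Morita self-equivalences lift'' conflates two different functors -- the mesh-category-induced equivalence is not a priori a Morita self-equivalence of $A$, nor even obviously a StM; the trivial fix is to take the identity as the paper does, since only the action on the configuration matters for the lemma. Second, and more substantively, for the generator $H$ the paper deliberately does \emph{not} cite \cite{Dugas2011}: it reruns the sms mutation $\mu_{1_1}^+(\mathcal{S}_A)$ on the non-standard algebra $A$ itself and invokes Dugas's mutation theorem from \cite{Dugas2012} to realise this mutation by a standard derived equivalence, exactly as in the standard case. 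This keeps the argument self-contained and is consistent with the paper's stated aim of simplifying the proof of \cite{Dugas2011}; your direct appeal to \cite{Dugas2011} is logically acceptable if that reference indeed covers the non-standard $(D_{3m},1/3,1)$ algebras, but it buys less, and you should in any case define $\phi$ to be the liftable StM so obtained (rather than the mesh-category equivalence), since two stable self-equivalences inducing the same automorphism of ${}_s\Gamma_A$ need not be isomorphic -- compare Example \ref{determine-by-object-shifted}.
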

\begin{proof}
By Asashiba's description, $StPic(A_s) = Pic'(A_s)\langle
\Omega\rangle[H]$.  If $\phi_s\in Pic'(A_s)$, then it must permute
the $m-1$ simple modules on the mouth of the stable tube and fixes
the remaining one in a high vertex. It follows from the description
of the stable AR-quiver of $A_s$ that $\phi_s$ fixes
$\mathcal{S}_{A_s}$ and induces the identity map $Conf(\Gamma)\to
Conf(\Gamma)$. Therefore we can simply pick the (liftable StM)
identity functor for $\phi$.  If $\phi_s = \Omega_{A_s}^n$ for some
$n\in\Z$, then by standard-non-standard correspondence, picking
$\phi$ to be the Heller shift $\Omega_A^n$ of $A$ will do the trick.
This is obviously a liftable StM.  For $H$, we do the same sms
mutation $\mu_{1_1}^+(\mathcal{S}_A)$ as in the standard case, which
gives a liftable StM $\phi$.
\end{proof}

\begin{Thm}\label{liftable-nonstandard}
Every sms $\mathcal{S}$ of $A$ is simple-image of Morita type under a liftable StM.
\end{Thm}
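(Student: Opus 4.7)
The plan is to reduce to the already-established standard case of Theorem~\ref{lifting-theorem} via the standard--non-standard correspondence of fact~(1), and to transport the liftable StM produced there to the non-standard side using Lemma~\ref{lem1}.

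I first translate $\mathcal{S}$ to the standard side: by fact~(2), $\mathcal{S}$ corresponds to an sms $\mathcal{S}_s$ of $A_s$. Applying the standard case of Theorem~\ref{lifting-theorem} to $\mathcal{S}_s$ produces a standard RFS algebra $B_s$ together with a liftable StM $\psi_s\colon\stmod{B_s}\to\stmod{A_s}$ satisfying $\psi_s(\mathcal{S}_{B_s})=\mathcal{S}_s$. Since stable equivalence of Morita type preserves the type (Theorem~\ref{dclassRFS}(2)), $B_s$ is again of type $(D_{3m},1/3,1)$, so fact~(1) yields a non-standard counterpart $B$ of $B_s$.

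I then fix, by fact~(3), a liftable StM $\rho\colon\stmod{B}\to\stmod{A}$, and by the standard case a liftable StM $\rho_s\colon\stmod{B_s}\to\stmod{A_s}$. The composition $\phi_s:=\psi_s\rho_s^{-1}$ lies in $StPic(A_s)$, so Lemma~\ref{lem1} provides a liftable non-standard counterpart $\alpha\in StPic(A)$. My candidate for the desired liftable StM is
\[ \psi:=\alpha\circ\rho\colon\stmod{B}\longrightarrow\stmod{A}, \]
which is a liftable StM as a composition of two.

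The main obstacle is the verification that $\psi(\mathcal{S}_B)=\mathcal{S}$. This amounts to showing that the bijection sms$(A)\leftrightarrow$sms$(A_s)$ of fact~(2) is equivariant under the actions of $\rho,\rho_s,\alpha,\phi_s$ on the respective sms-sets. Equivariance for $\rho$ and $\rho_s$ follows because both are induced by isomorphisms of stable AR-quivers (identified via fact~(1)), and sms's are determined by their configurations (Theorem~\ref{sms-configuration}). For $\alpha$ and $\phi_s$, one exploits the decomposition $StPic(A_s)=Pic'(A_s)\langle[\Omega_{A_s}]\rangle\cup Pic'(A_s)\langle[\Omega_{A_s}]\rangle[H]$ of Theorem~\ref{stable-Picard-group-RFS} together with the generator-by-generator analysis already carried out in the proof of Lemma~\ref{lem1}: $Pic'(A_s)$ acts trivially on configurations, the Heller shift $\Omega_{A_s}$ transports to $\Omega_A$ via fact~(1), and $H$ realises the left sms mutation $\mu_{1_1}^+$, an intrinsic operation on sms's respected by fact~(1). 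Chaining these compatibilities gives $\psi(\mathcal{S}_B)=\mathcal{S}$, completing the proof.
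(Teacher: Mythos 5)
Your overall strategy---produce $B$ via the standard side, take a liftable StM $\rho\colon\stmod{B}\to\stmod{A}$ from fact~(3), and correct its failure to hit $\mathcal{S}$ by a liftable element of $\mathrm{StPic}(A)$ obtained from $\mathrm{StPic}(A_s)$ via Lemma~\ref{lem1}---is essentially the paper's. But the verification of $\psi(\mathcal{S}_B)=\mathcal{S}$ has a genuine gap. Your correcting element $\phi_s=\psi_s\rho_s^{-1}$ sends $\rho_s(\mathcal{S}_{B_s})$ to $\mathcal{S}_s$ by construction; what actually needs to be sent to $\mathcal{S}_s$ is the sms of $A_s$ corresponding under fact~(2) to $\rho(\mathcal{S}_B)$. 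These two sms's of $A_s$ coincide only if $\rho$ and $\rho_s$ induce the \emph{same} isomorphism of the common stable AR-quiver, and you chose them independently (one from fact~(3), one from the standard case), so there is no reason they do. ``Both are induced by isomorphisms of stable AR-quivers'' does not give equivariance: two different isomorphisms ${}_s\Gamma_B\to{}_s\Gamma_A$ will in general carry the configuration of $\mathcal{S}_B$ to different configurations of ${}_s\Gamma_A$. Concretely, replacing $\rho_s$ by $\rho_s\Omega_{B_s}^2$ yields another liftable StM but changes $\rho_s(\mathcal{S}_{B_s})$, and hence changes your $\phi_s$ and $\alpha$, while $\rho(\mathcal{S}_B)$ stays put; your $\psi$ cannot hit $\mathcal{S}$ for all such choices.

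The gap is repairable. One fix is to \emph{choose} $\rho_s$ to be the stable equivalence induced, through the mesh category, by the very AR-quiver isomorphism that $\rho$ induces; the standard case of Theorem~\ref{lifting-theorem} then guarantees $\rho_s$ is a liftable StM and the equivariance you assert becomes true. The paper instead avoids $\psi_s,\rho_s$ entirely: it notes that the configurations of $\rho(\mathcal{S}_B)$ and of $\mathcal{S}$ correspond to the same algebra under Riedtmann's classification, hence are related by an automorphism of ${}_s\Gamma_{A_s}$, which (as in the proof of Proposition~\ref{transitive-stable-self-equivalence}) is realised by a stable self-equivalence $\phi_s$ of $A_s$ carrying one sms to the other; Lemma~\ref{lem1} then supplies the liftable correcting element. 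Note also that both you and the paper need Lemma~\ref{lem1} in the slightly stronger form that the non-standard counterpart acts compatibly with $\phi_s$ on \emph{all} sms's, not only on $\mathcal{S}_{A_s}$; this does follow from its proof, since the counterparts constructed there induce the same map on configurations, but it is worth saying explicitly.
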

\begin{proof}
We know by Theorem \ref{transsms} that $\mathcal{S}$ is
simple-image, so there is some stable equivalence $\psi:\stmod B \to
\stmod A$ with $\psi(\mathcal{S}_B)=\mathcal{S}$.  By the above fact
$(3)$, there is a liftable StM $\phi_1:\stmod{B}\to \stmod{A}$. Let
$\mathcal{S}'=\phi_1(\mathcal{S}_B)$.
 If $\mathcal{S}'=\mathcal{S}$, then we are done.  Otherwise, their corresponding sms's $\mathcal{S}_s$ and $\mathcal{S}_s'$ of $A_s$ are also not equal.
 But they belong to the same $StPic(A_s)$-orbit, since $\phi_1$ induces an automorphism on the stable AR-quiver of $A$ or $A_s$,
 so there is some stable equivalence $\phi_s:\stmod{A_s}\to\stmod{A_s}$ sending $\mathcal{S}_s'$ to $\mathcal{S}_s$.
 This gives a liftable StM $\phi_2:\stmod{A}\to\stmod{A}$ by Lemma \ref{lem1}, and it maps $\mathcal{S}'$ to $\mathcal{S}$.
 Now we have a liftable StM $\phi=\phi_2\phi_1:\stmod{B}\to\stmod{A}$ with $\phi(\mathcal{S}_B)=\mathcal{S}$.
\end{proof}

This finishes the proof of Theorem \ref{lifting-theorem}.
\bigskip

\section{Sms's and mutations}
In this section, we discuss connections with mutations and with
tilting quivers and how to use these concepts for sms. A main result
is Theorem \ref{RFS-tilting-connected}, which states that the
homotopy category $\mathcal{T}=K^b(\mathrm{proj}A)$ is strongly
tilting-connected when $A$ is an RFS algebra. This result is
formally independent of sms, but it fits well with the point of view
taken in this paper.

The first connection we consider here comes from the aforementioned
result of Dugas \cite{Dugas2012}, which opens up a new and efficient way
to study (and compute) simple-image sms's of Morita type and their
liftability, as demonstrated in the previous section.

We have seen how mutation of sms and Nakayama-stable smc are
connected. We remind the reader of the main result of \cite{KY},
which in particular
gives a bijection between smc and silting objects as well as
compatibility of the respective mutations. Since we have already
established a connection between sms and smc, we can now exploit the
connection with silting / tilting objects.

First we briefly recall some information on silting theory developed
by Aihara and Iyama \cite{AI}. Throughout this section, $A$ is an
indecomposable non-simple self-injective algebra over an
algebraically closed field. We use $\mathcal{T}$ to denote the
(triangulated) homotopy category $K^b(\mathrm{proj}A)$ of bounded
complexes of projective $A$-modules; the suspension functor in this
category is denoted {by $[1]$, and by $[n]$ we mean $[1]^n$.}

\begin{Def}\label{silting-thy} (\cite{AI})
\begin{enumerate}
\item Let $T\in \mathcal{T}$. Then $T$ is a silting (resp. tilting) object if:
\begin{enumerate}
\item $Hom_{\mathcal{T}} (T,T[i])=0$ for any $i>0$ (resp. $i\neq 0$)
\item The smallest thick subcategory of $\mathcal{T}$ containing
$T$ is $\mathcal{T}$ itself.
\end{enumerate}
\item Let $T=X_1\oplus\cdots \oplus X_r$ be a silting object
({where each $X_i$ is indecomposable}) and
$\mathcal{X}\subset\{1,\ldots,r\}$. A left silting mutation of $T$
with respect to $\mathcal{X}$, denoted by $\mu_{\mathcal{X}}^+(T) = Y_1\oplus\cdots \oplus Y_r$ satisfies by definition that the
indecomposable summands $Y_i$ are given as follows:
\begin{enumerate}
\item $Y_i = X_i$ for $i\notin \mathcal{X}$
\item For $i\in \mathcal{X}$:
\[
Y_j := \mathrm{cone}(\text{minimal left }\mathrm{add}({\bigoplus_{i\notin
\mathcal{X}}X_i})\text{-approximation of }X_j)
\]
\end{enumerate}
A right silting mutation $\mu_{\mathcal{X}}^-$ is defined similarly
using right approximation.  A silting mutation is said to be
irreducible if {$\mathcal{X}=\{i\}$ for some $i$.}
\end{enumerate}
\end{Def}

Note that tilting objects in $\mathcal{T}$ (i.e. one-sided tilting
complexes) are exactly the silting objects that are stable under
Nakayama functor (see, for example, the discussion after Theorem 3.5 of \cite{KY}). As we have hinted throughout the whole article,
Nakayama-stability plays a vital role in the study of sms's, at
least for sms's which are liftable and simple-image of Morita type.
For convenience, we denote the Nakayama functor $\nu =\nu_A$ when
the algebra $A$ under consideration is clear, and we assume every
tilting object is basic, i.e. its indecomposable summands are
pairwise non-isomorphic.

\begin{Lem}\label{nakayama-stable-mutation}
{Let $A, \mathcal{T}$ be as above and $\mathcal{C}$ a full
subcategory of $\mathcal{T}$ with $\nu\mathcal{C} = \mathcal{C}$.}
If $Y\in \mathcal{T}$ and $f:X\to Y$ is a (minimal) left
$\mathcal{C}$-approximation of $Y$, then $\nu_A(f):\nu X\to \nu Y$
is a (minimal) left $\mathcal{C}$-approximation of $\nu Y$. In
particular, if $\nu Y=Y$, then $\nu X=X$.
\end{Lem}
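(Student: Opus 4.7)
The plan is to exploit the fact that, for $A$ self-injective, the Nakayama functor $\nu = \nu_A$ is a triangulated auto-equivalence of $\mathcal{T} = K^b(\mathrm{proj}\,A)$, so it preserves all categorical constructions including approximations. Pick a quasi-inverse $\nu^{-1}$; the hypothesis $\nu\mathcal{C}=\mathcal{C}$ gives $\nu^{-1}\mathcal{C}=\mathcal{C}$ as well.

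First I would verify the approximation property of $\nu(f)$. Given any morphism $g$ between $\nu Y$ and some object $C \in \mathcal{C}$, applying $\nu^{-1}$ produces a morphism between $Y$ and $\nu^{-1}C$, which lies in $\mathcal{C}$. By the left $\mathcal{C}$-approximation property of $f$, this morphism factors through $f$; applying $\nu$ to the resulting factorization and using $\nu\nu^{-1}\cong\mathrm{id}$ yields a factorization of $g$ through $\nu(f)$. Hence $\nu(f)$ is a left $\mathcal{C}$-approximation of $\nu Y$.

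Next I would handle minimality. Recall that $f$ is minimal when any endomorphism of its codomain (resp.\ domain) compatible with $f$ is forced to be an isomorphism. Given such a compatible endomorphism $h$ associated to $\nu(f)$, applying $\nu^{-1}$ produces an analogous compatible endomorphism associated to $f$, which by minimality of $f$ is an isomorphism. Since $\nu$ preserves isomorphisms, $h=\nu(\nu^{-1}(h))$ is an isomorphism, proving minimality of $\nu(f)$.

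For the last assertion, if $\nu Y \cong Y$ then $f \colon X \to Y$ and $\nu(f) \colon \nu X \to Y$ are both minimal left $\mathcal{C}$-approximations of the same object $Y$; uniqueness of minimal approximations up to isomorphism then gives $\nu X \cong X$. The argument is essentially formal, so there is no substantive obstacle; the only point requiring care is bookkeeping the direction convention for ``left'' approximation so that $\nu^{-1}$ is applied on the correct side, but this is symmetric in the two conventions and causes no real difficulty.
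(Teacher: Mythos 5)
Your proof is correct and follows essentially the same route as the paper: both arguments transport the approximation and minimality properties along the auto-equivalence $\nu$, using $\nu\mathcal{C}=\mathcal{C}$ to identify maps into (or out of) objects of $\mathcal{C}$ with $\nu$-images of such maps, and the final claim follows from uniqueness of minimal approximations exactly as you state. The only cosmetic difference is that you phrase the transport via an explicit quasi-inverse $\nu^{-1}$, whereas the paper uses full faithfulness of $\nu$ together with the fact that every object of $\mathcal{C}$ is of the form $\nu Z$ with $Z\in\mathcal{C}$.
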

\begin{proof}
Since $A$ is self-injective, so $\nu\mathcal{T} = \mathcal{T}$, and
$Hom_{\mathcal{T}}(X,Y) \simeq Hom_{\mathcal{T}}(\nu X,\nu Y)$. As
$\nu X\in \mathcal{C}$, to see $\nu f$ is a
$\mathcal{C}$-approximation, we need to show that
$Hom_{\mathcal{T}}(\nu f,X')$ is surjective for all
$X'\in\mathcal{C}$.  Since $\nu\mathcal{C}=\mathcal{C}$, every
object in $X'\in\mathcal{C}$ is of the form $\nu Z$ for some
$Z\in\mathcal{C}$.  Also $Hom_{\mathcal{T}}(\nu X,\nu Z) \simeq Hom_{\mathcal{T}}(X,Z)$, so every map $\nu X\to \nu Z$ can be
written as $\nu h$ for some $h: X\to Z$.  Since $f$ is an
approximation, $h = fg$ for some $g\in Hom_{\mathcal{T}}(Y,Z)\simeq Hom_{\mathcal{T}}(\nu Y,\nu Z)$.  As
$\nu$ is an auto-equivalence of $\mathcal{C}$, $\nu h= \nu (fg)=(\nu f)(\nu g)$. Hence $\nu f:\nu X\to \nu Y$ is a
$\mathcal{C}$-approximation.  For minimality we proceed similarly.
i.e. for $g: \nu X\to \nu X$, $g=\nu h$ for some $h:X\to X$, the
condition $g(\nu f)=\nu f$ can now be rewritten as $\nu (hf)=(\nu h)(\nu f) = \nu f$ which implies $hf=f$.  By minimality of $f$, $h$
is an isomorphism, hence so is $\nu h$.
\end{proof}

By this lemma, a mutation of a tilting object (i.e. a
Nakayama-stable silting object) is a tilting object if and only if
we mutate at a Nakayama-stable summand. Therefore, a tilting
mutation is the same as a silting mutation at a Nakayama-stable
summand.  {An irreducible silting mutation
mutates with respect to an indecomposable summand. By thinking of
this as mutating with respect to a ``minimal" Nakayama-stable
summand,} we can make sense of ``irreducibility" for tilting
mutation for general self-injective algebras (rather than just
weakly symmetric algebras).

\begin{Def} ({Compare to \cite{Aihara2012}})
(1) Let $T=T_1\oplus\cdots\oplus T_r$ be a
basic tilting object in $\mathcal{T}=K^b(\mathrm{proj}A)$. If $X$ is
a Nakayama-stable summand of $T$ such that for any Nakayama-stable
summand $Y$ of $X$, we have $Y=X$, then we call $X$ a minimal
Nakayama-stable summand.  A (left) tilting mutation $\mu_{X}^+(T)$
is said to be irreducible if $X$ is minimal. Similarly for right
tilting mutation.

(2) Let $T,U$ be basic tilting objects in $\mathcal{T}$. We say that
$U$ is connected (respectively, left-connected) to $T$ if $U$ can be
obtained from $T$ by iterative irreducible (respectively, left)
tilting mutations.

(3) $\mathcal{T}$ is tilting-connected if all its basic tilting
objects are connected to each other. We say that $\mathcal{T}$ is
strongly tilting-connected if for any basic tilting objects $T,U$
with $Hom_{\mathcal{T}}(T,U[i])=0$ for all $i>0$, $U$ is
left-connected to $T$.
\end{Def}
\begin{Rem}
(1) {Note that the irreducible tilting mutation just defined is
different from an irreducible silting mutation when $A$ is
self-injective non-weakly symmetric, even though it is itself a
silting mutation as well. We will emphasise irreducible
\textit{tilting} mutation throughout to distinguish between our
notion and irreducible silting mutation.}

(2) {We can define the analogous notion of (left or right)
irreducible sms mutation similar to irreducible tilting mutation
above. More precisely, for an sms $\mathcal{S} = \{X_1,\ldots,X_r\}$ as in Definition \ref{mutation-sms},
its irreducible mutation means that we mutate at a Nakayama-stable subset
$\mathcal{X} = \{X_{i_1},\ldots, X_{i_m}\}$ which is minimal in the obvious sense. }

(3) {Strongly tilting-connected implies tilting-connected. This
follows from the fact that left and right mutations are inverse
operations to each other, i.e. $\mu_Y^-\mu_X^+(T) = T =\mu_Z^+\mu_X^-(T)$ where $T=X\oplus M$, $\mu_X ^+(T)=Y\oplus M$, and $\mu_X^-(T)=Z\oplus M$.}

\end{Rem}

We can now reformulate a question asked in \cite{AI} and
\cite[Question 3.2]{Aihara2012}:  Is
$\mathcal{T}=K^b(\mathrm{proj}A)$ tilting-connected for
self-injective algebra $A$?  By reproving the Nakayama-stable
analogue of the results in \cite{AI} and \cite{Aihara2012}, we can
answer this question positively for RFS algebras $A$. These proofs
are not directly related to the simple-minded theories and are
really about modifying the proofs of Aihara and of Aihara and Iyama in an
appropriate way.

\begin{Thm}\label{RFS-tilting-connected}
Let $A$ be an RFS algebra. Then the homotopy category
$\mathcal{T}=K^b(\mathrm{proj}A)$ is strongly tilting-connected.
\end{Thm}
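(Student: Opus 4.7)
The plan is to adapt the proofs of strong silting-connectedness of Aihara--Iyama \cite{AI} and of strong tilting-connectedness for representation-finite symmetric algebras of Aihara \cite{Aihara2012} to the Nakayama-equivariant setting of an arbitrary RFS algebra. The new content relative to \cite{Aihara2012} is the handling of self-injective algebras that are not weakly symmetric: for these the Nakayama functor $\nu$ may act with non-trivial orbits on the indecomposable summands of a tilting object, so one has to reason with irreducible tilting mutations, which are mutations at entire $\nu$-orbits.

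First, I would verify that the basic silting machinery is compatible with Nakayama stability. The Hom-vanishing preorder $T \ge U$ defined by $\mathrm{Hom}_{\mathcal{T}}(T, U[i])=0$ for $i>0$ restricts to a partial order on tilting objects, and by Lemma \ref{nakayama-stable-mutation}, if $T$ is tilting and $X$ is a minimal Nakayama-stable summand, then the irreducible left tilting mutation $\mu_X^+(T)$ is again tilting and is strictly smaller than $T$ in this order. A Nakayama-stable analogue of Bongartz completion is available too, because for self-injective $A$ the free module ${}_AA$ itself is the maximal tilting object, and it is trivially Nakayama-stable.

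The key technical input is the following compatibility. If $T = M \oplus X$ is tilting with $X = X_1 \oplus \nu X_1 \oplus \cdots \oplus \nu^{k-1} X_1$ a minimal Nakayama-stable summand, then the irreducible tilting mutation $\mu_X^+(T)$ agrees with the iterated silting mutation $\mu_{\nu^{k-1}X_1}^+ \cdots \mu_{\nu X_1}^+ \mu_{X_1}^+(T)$. This combines two observations: the Nakayama-equivariance of approximations, namely that $\nu$ carries a minimal left $\mathrm{add}(M)$-approximation of $X_j$ to one of $\nu X_j$ (using $\nu M \simeq M$ since $T$ and $X$ are Nakayama-stable), and the standard silting-theoretic fact that silting mutations at disjoint indecomposable summands commute and can be amalgamated into a single mutation at their direct sum. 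With this in hand, given tilting $T \ge U$, strong silting-connectedness of $\mathcal{T}$ (valid for RFS algebras since the silting quiver is finite, by \cite{AI}, or by direct adaptation of the argument in \cite{Aihara2012}) provides a chain of irreducible left silting mutations from $T$ to $U$, which one aims to reorganise into a chain of irreducible left tilting mutations through tilting objects.

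The main obstacle is precisely this reorganisation: an intermediate silting object $\mu_{X_j}^+(S_i)$ in such a chain is typically not Nakayama-stable, so one cannot simply read off a tilting chain from a silting one. The argument has to exploit the $\nu$-equivariance of the endpoints $T$ and $U$ by applying $\nu$ to an existing silting chain to obtain a parallel chain of the same length, and then constructing a common refinement whose steps are organised by Nakayama orbits. An induction on the length of the silting chain, combined with the compatibility of the previous paragraph that packages each $\nu$-orbit block of silting mutations into a single irreducible tilting mutation, then yields the required chain of tilting objects and establishes strong tilting-connectedness.
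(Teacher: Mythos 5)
Your strategy differs from the paper's, and it contains two genuine gaps. The first is that your ``key technical input'' is false as stated: for $T = M \oplus X$ with $X = X_1 \oplus \nu X_1 \oplus \cdots \oplus \nu^{k-1}X_1$ a minimal Nakayama-stable summand, the orbit mutation $\mu_X^+(T)$ is computed from minimal left $\mathrm{add}(M)$-approximations, i.e.\ approximations avoiding \emph{all} of $X$, whereas the first step of the composite $\mu_{\nu^{k-1}X_1}^+\cdots\mu_{\nu X_1}^+\mu_{X_1}^+(T)$ approximates $X_1$ in $\mathrm{add}(M \oplus \nu X_1 \oplus \cdots \oplus \nu^{k-1}X_1)$, which still contains the other members of the orbit. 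These approximations differ in general, so irreducible silting mutations at the members of a $\nu$-orbit do not ``amalgamate'' into the orbit mutation. A minimal example: for a self-injective Nakayama algebra with two simples and Loewy length two, $\nu$ swaps $P_1$ and $P_2$, the unique irreducible left tilting mutation of $A$ is $\mu_{P_1\oplus P_2}^+(A)=A[1]$, but $\mu_{P_1}^+(A)$ replaces $P_1$ by the cone of a non-zero map $P_1\to P_2$, so the composite of the two irreducible silting mutations is not $A[1]$.

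The second gap is the reduction to silting-connectedness. Strong silting-connectedness of $K^b(\mathrm{proj}A)$ for RFS algebras is not available from \cite{AI}, and your justification --- finiteness of the silting quiver --- is wrong: the silting quiver is always infinite (all shifts $A[n]$ and arbitrary iterated mutations are distinct silting objects); what representation-finiteness buys is only finiteness of the set of (s)ilting objects in a fixed interval such as $[B,B[-1]]$. Even granting a chain of irreducible silting mutations from $T$ to $U$, the intermediate objects are not Nakayama-stable, and the ``common refinement organised by Nakayama orbits'' is precisely the missing argument rather than a consequence of applying $\nu$ to the chain. The paper avoids this detour entirely: using Lemma \ref{nakayama-stable-mutation} and uniqueness of minimal approximations it proves a Nakayama-stable analogue of \cite[Prop.\ 2.24]{AI} (the $\mathrm{add}\,T$-approximation triangles of $U$ can be taken with all terms $\nu$-stable), deduces directly in the tilting world that $T>U$ forces an irreducible left \emph{tilting} mutation $P$ with $T>P\geq U$, and concludes via Aihara's conditions (A1) and (A2) together with the finiteness of tilting objects in the relevant intervals. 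To repair your argument you would have to replace the amalgamation claim by this equivariant approximation statement, at which point the silting detour becomes unnecessary and you recover the paper's proof.
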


The proof will occupy a separate subsection below.

Recall the silting quiver as defined in \cite{AI} and
\cite{Aihara2012}. Again we can define a ``Nakayama-stable version"
and the sms's version of this combinatorial gadget.

\begin{Def} ({Compare to \cite{Aihara2012,AI}})
Let $A$ be a self-injective algebra.

(1)  Let tilt($A$) be the class of all tilting objects in
$\mathcal{T}=K^b(\mathrm{proj}A)$ up to shift and homotopy
equivalence.  The tilting quiver of $\mathcal{T}$ is a quiver
$Q_{\mathrm{tilt}}(A)$ such that the set of vertices is the class of
basic tilting objects of $\mathcal{T}$; and for $T,U$ tilting
objects, $T \to U$ is an arrow in the quiver if $U$ is an
irreducible left tilting mutation of $T$.

(2) Let sms($A$) denote the class of all sms's of $A$.  The mutation
quiver of sms($A$) is a quiver $Q_{\mathrm{sms}}(A)$ such that the
set of vertices is sms($A$); and for two sms's $\mathcal{S},\mathcal{S}'$, $\mathcal{S}\to \mathcal{S}'$ is an arrow in the
quiver if $\mathcal{S}'$ is {a mutation of $\mathcal{S}$}.
\end{Def}
\begin{Rem}
(1) Long before the work of \cite{AI}, the term tilting quiver has
been used for a graph whose vertices are tilting modules over a
finite dimensional algebra. The tilting quiver here is a
specialisation of the silting quiver of \cite{AI}, whose vertices
are objects in a triangulated category.

(2) Combinatorially (i.e. ignoring the ``labeling" of the vertices),
$Q_{\mathrm{tilt}}(A) = Q_{\mathrm{tilt}}(B)$ (respectively
$Q_{\mathrm{sms}}(A) = Q_{\mathrm{sms}}(B)$) if $A$ and $B$ are
derived (resp. stably) equivalent.

\end{Rem}

\begin{Prop}\label{mutation-transitive-sms}
Suppose $A$ is an RFS algebra.  Then there is a surjective map
$Q_{\mathrm{tilt}}(A)\to Q_{\mathrm{sms}}(A)$.  In particular, every
sms of $A$ can be obtained by iterative (left irreducible) mutation
starting from the simple $A$-modules.
\end{Prop}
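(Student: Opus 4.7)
The plan is to factor the desired map through the intermediate world of Nakayama-stable simple-minded collections, using the Koenig--Yang correspondence from \cite{KY} together with our Theorems \ref{RFS-sms-smc-bijec} and \ref{RFS-tilting-connected}.

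First I would define the map on vertices. Given a basic tilting object $T\in\mathcal{T}=K^b(\mathrm{proj}A)$, recall that tilting objects are precisely the Nakayama-stable silting objects. Under the bijection of \cite{KY} between basic silting objects of $\mathcal{T}$ and simple-minded collections of $D^b(\mathrm{mod}A)$, $T$ corresponds to an smc $\mathbf{S}_T$, and Nakayama-stability of $T$ transports to Nakayama-stability of $\mathbf{S}_T$ because $\nu_A$ is compatible with the embedding $\mathcal{T}\hookrightarrow D^b(\mathrm{mod}A)$. Applying the natural quotient $\eta_A:D^b(\mathrm{mod}A)\to\stmod A$, Theorem \ref{sms-smc}(1) yields an sms $\eta_A(\mathbf{S}_T)$ of $A$. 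In particular, $T=A$ maps to the sms $\mathcal{S}_A$ of simple $A$-modules, since the canonical smc associated with $A$ is $\mathcal{S}_A$ viewed in $D^b(\mathrm{mod}A)$.

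Next I would check that arrows go to arrows. The Koenig--Yang bijection intertwines (irreducible) silting mutations with (irreducible) smc mutations summand-by-summand. Restricting to the Nakayama-stable situation, an irreducible left tilting mutation at a minimal Nakayama-stable summand $X$ of $T$ corresponds to an smc mutation at a minimal Nakayama-stable subset of $\mathbf{S}_T$. Dugas' definition of sms mutation (Definition \ref{mutation-sms}) was designed precisely so that such a Nakayama-stable smc mutation descends, under $\eta_A$, to a left irreducible sms mutation; this is the same compatibility that was invoked in the standard-case portion of the proof of Theorem \ref{lifting-theorem}. Hence the map sends arrows of $Q_{\mathrm{tilt}}(A)$ to arrows of $Q_{\mathrm{sms}}(A)$.

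For surjectivity, let $\mathcal{S}\in\mathrm{sms}(A)$. By Theorem \ref{RFS-sms-smc-bijec} it lifts to a Nakayama-stable smc $\mathbf{S}$ of $D^b(\mathrm{mod}A)$, and by \cite{KY} the latter corresponds to a basic tilting object $T\in\mathcal{T}$ with $\eta_A(\mathbf{S}_T)=\mathcal{S}$. So the map is surjective on vertices, and together with the arrow-preserving property above this yields a surjective morphism of quivers. For the ``in particular" statement, Theorem \ref{RFS-tilting-connected} asserts that $\mathcal{T}$ is strongly tilting-connected, so any such $T$ is obtained from $A$ by a finite sequence of irreducible left tilting mutations; applying the map, $\mathcal{S}$ is obtained from $\mathcal{S}_A$ by the corresponding sequence of left irreducible sms mutations.

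The main obstacle I anticipate lies in the arrow-preservation step: one must verify that ``minimal Nakayama-stable summand'' of a tilting object transports, under the Koenig--Yang bijection composed with $\eta_A$, to ``minimal Nakayama-stable subset'' of an sms, and that the distinguished triangles defining tilting mutation in $\mathcal{T}$ project to the triangles defining sms mutation in $\stmod A$. Once this dictionary is pinned down precisely --- using Lemma \ref{nakayama-stable-mutation} to control Nakayama-stability of approximations together with Dugas' remark on how his sms mutation aligns with the smc mutation of \cite{KY} --- the rest of the argument is a bookkeeping exercise.
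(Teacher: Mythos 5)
Your proposal follows essentially the same route as the paper's proof: identify vertices of $Q_{\mathrm{tilt}}(A)$ with Nakayama-stable smc's via the Koenig--Yang correspondence, get vertex surjectivity from the liftability of all sms's (Theorem \ref{RFS-sms-smc-bijec}), use Dugas' compatibility result to descend smc mutations to sms mutations for the arrows, and invoke Theorem \ref{RFS-tilting-connected} for the ``in particular'' statement. The compatibility issue you flag as the main obstacle is exactly what the paper delegates to Dugas \cite[Proposition 5.4]{Dugas2012}, so your argument is correct and complete modulo that citation.
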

\begin{Proof}
Using the correspondence, which respects mutation, between
(Nakayama-stable) silting object and (Nakayama-stable) smc in
\cite{KY}, the vertices of $Q_{\mathrm{tilt}}(A)$ can be identified
with Nakayama-stable smc. Every sms of $A$ is liftable simple-image
(see Proof of Theorem \ref{RFS-sms-smc-bijec}). This implies
surjectivity on the set of vertices. The surjectivity on the set of
arrows now follows from a result of Dugas \cite[Proposition 5.4]{Dugas2012}. For the last statement, let $\mathcal{S}$ be an sms
of $A$, then $\mathcal{S}$ is liftable to a Nakayama-stable smc
$\mathbf{S}$, which corresponds to a tilting object $T$. By Theorem
\ref{RFS-tilting-connected}, we can obtain $T$ by iterative tilting
mutations starting from $A$. The bijection in \cite{KY} then implies
that $\mathbf{S}$ can be obtained by iterative smc mutations starting
from simple $A$-modules.  Finally, Dugas' result is applied to
restrict smc mutations to sms mutations.
\end{Proof}

Since the sms's of an RFS algebra are in
general not acted upon transitively by the stable Picard group,
this result shows that a mutation of sms's usually cannot be realized
by a stable self-equivalence.

This result can also be compared with Theorem \ref{sms-smc}, where
we formed the quotient of the class of all smc's (respectively
sms's) by the derived (respectively stable) Picard group, obtaining
an injection regardless of representation-finiteness.  On the other
hand, these quivers visualise how we can ``track" simple-image sms's
of Morita type, and they contain more structure than the sets
considered in Theorem \ref{sms-smc}. Yet it is still unclear how
these links between smc's (hence tilting complexes) and sms's can be
used to extract information about derived and/or stable Picard
groups.

Another connection of this kind, with two-term tilting complexes,
will be discussed in \cite{ChanInPrep}.
\bigskip

\subsection{Proof of Theorem \ref{RFS-tilting-connected} \`a la Aihara}
We use the notation $\mathcal{T}=K^b(\mathrm{proj}A)$ with $A$ an
RFS algebra over a field. The term tilting object refers to objects
in $\mathcal{T}$, that is, to complexes. Recall the following
notation from \cite{AI} and \cite{Aihara2012}.
\begin{Def}
Let $T,U$ be tilting objects of $\mathcal{T}$, write $T\geq U$ if
$Hom_{\mathcal{T}}(T,U[i])=0$ for all $i>0$.
\end{Def}
Note this defines a partial order on the class of silting (and
hence, tilting) objects of $\mathcal{T}$.  Applying Lemma
\ref{nakayama-stable-mutation} to \cite[Prop 2.24]{AI} yields:
\begin{Prop}
{Let $T,U$ be tilting objects} of a self-injective algebra, and $U_0=U=\nu U$ such that $T\geq U$ , then there are triangles
\[\xymatrix@R=0.2cm{
U_1\ar[r]^{g_1}&T_0\ar[r]^{f_0}&U_0\ar[r]&U_1[1],\\
&\cdots,\\
U_\ell\ar[r]^{g_\ell}&T_{\ell-1}\ar[r]^{f_{\ell-1}}&U_{\ell-1}\ar[r]&U_\ell[1],\\
0\ar[r]^{g_{\ell+1}}&T_\ell\ar[r]^{f_\ell}&U_\ell\ar[r]&0, }\] for
some $\ell\ge 0$ such that $f_i$ is a minimal right
$\mathrm{add}T$-approximation, $g_{i+1}$ belongs to the Jacobson
radical $J_{\mathcal{T}}$,  $\nu U_i=U_i$ and $\nu T_i= T_i$, for
any $0\le i\le \ell$.
\end{Prop}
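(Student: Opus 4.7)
The plan is to combine the original Aihara--Iyama Proposition 2.24 with an inductive Nakayama-stability argument driven by Lemma \ref{nakayama-stable-mutation}.

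First, I would invoke \cite[Prop.~2.24]{AI} directly to produce the chain of triangles
\[
U_{i+1}\xrightarrow{g_{i+1}} T_i\xrightarrow{f_i} U_i\to U_{i+1}[1]
\]
for $0\le i\le \ell$, terminating at some finite stage $\ell$, such that each $f_i$ is a minimal right $\mathrm{add}T$-approximation and each $g_{i+1}\in J_{\mathcal{T}}$. This gives the existence and termination of the sequence and requires no new work beyond citing the reference, because the hypothesis $T\ge U$ is exactly what is needed there. The termination $\ell<\infty$ is the original Aihara--Iyama argument and is unaffected by Nakayama stability.

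The new content is the assertions $\nu U_i\simeq U_i$ and $\nu T_i\simeq T_i$, which I would prove by induction on $i$. The base case $\nu U_0\simeq U_0$ is the hypothesis. For the inductive step, assume $\nu U_i\simeq U_i$. Since $T$ is a tilting object in $\mathcal{T}$, it is stable under $\nu$, and hence $\mathrm{add}T$ is a Nakayama-stable subcategory of $\mathcal{T}$ (recall that tilting objects are precisely the Nakayama-stable silting objects, as noted just before Lemma \ref{nakayama-stable-mutation}). The dual of Lemma \ref{nakayama-stable-mutation} (for right, rather than left, approximations; the proof is the obvious variant) then tells us that $\nu f_i:\nu T_i\to \nu U_i$ is again a minimal right $\mathrm{add}T$-approximation of $\nu U_i\simeq U_i$. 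By the uniqueness of minimal approximations, $\nu T_i\simeq T_i$ and $\nu f_i$ agrees with $f_i$ up to isomorphism under this identification.

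To propagate to $U_{i+1}$, apply the triangulated functor $\nu$ to the $i$-th triangle:
\[
\nu U_{i+1}\xrightarrow{\nu g_{i+1}}\nu T_i\xrightarrow{\nu f_i}\nu U_i\to (\nu U_{i+1})[1].
\]
Since the cone of $\nu f_i$ is determined up to (non-canonical) isomorphism and $\nu f_i$ is isomorphic to $f_i$ in the arrow category, the third term $\nu U_{i+1}$ is isomorphic to $U_{i+1}$, completing the induction.

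The main obstacle I anticipate is purely bookkeeping: one has to verify carefully that the isomorphism $\nu T_i\simeq T_i$ fits into a commutative square with $f_i$ and $\nu f_i$, so that applying the triangulated rotation actually yields an isomorphism on the cone object $U_{i+1}$, and not just an abstract isomorphism of triangles. Nothing deeper is needed, because all of the non-trivial homological input -- namely that minimal right $\mathrm{add}T$-approximations exist, that the process terminates, and that the connecting maps lie in $J_{\mathcal{T}}$ -- is already handled by the original Aihara--Iyama result; the only addition is the routine transport of Nakayama stability along minimal approximations guaranteed by Lemma \ref{nakayama-stable-mutation}.
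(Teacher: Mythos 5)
Your proposal is correct and follows essentially the same route as the paper: both apply the Nakayama functor to the approximation triangles from \cite[Prop.\ 2.24]{AI}, use Lemma \ref{nakayama-stable-mutation} (in its right-approximation form, since $\mathrm{add}T$ is $\nu$-stable for $T$ tilting) together with uniqueness of minimal approximations to get $\nu T_i\simeq T_i$, and then invoke uniqueness of cones to propagate $\nu U_{i+1}\simeq U_{i+1}$ inductively. Your explicit remark that one needs the dual (right-approximation) version of the lemma, and the care about the commuting square before taking cones, are points the paper's own proof passes over silently.
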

\begin{proof}
The only difference of the proof here and the one in \cite{AI} is to
use Lemma \ref{nakayama-stable-mutation} on the triangles in the
proof.  Apply Nakayama functor to the triangle in the proof:
\[
\nu U_1\xrightarrow{\nu g_1} \nu M_0 \xrightarrow{\nu f_0} \nu N_0
\to \nu N_1[1]
\]
and applying Lemma \ref{nakayama-stable-mutation} again, this
triangle becomes
\[
\nu U_1\xrightarrow{\nu g_1} M_0\xrightarrow{f_0} N_0 \to \nu N_1[1]
\]
and by the axioms of triangulated category, $\nu U_1\cong U_1$.  Now
the proof continues as in \cite{AI}.
\end{proof}

This can be used to deduce the Nakayama-stable analogue of
\cite[Theorem 2.35, Prop 2.36]{AI}:
\begin{Thm}\label{thm-tiltposet}
Let $T,U$ be tilting objects of a self-injective algebra.  Then
\begin{enumerate}
\item If $T>U$, then there exists an irreducible left tilting mutation $P$ of
$T$ such that $T>P\geq U$.
\item The following are equivalent:
\begin{enumerate}
\item $U$ is an irreducible left tilting mutation of $T$;
\item $T$ is an irreducible right tilting mutation of $U$;
\item $T>U$ and there is no $P$ tilting such that $T>P>U$.
\end{enumerate}
\end{enumerate}
\end{Thm}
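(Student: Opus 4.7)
The plan is to mirror Aihara and Iyama's arguments \cite[Theorem 2.35, Proposition 2.36]{AI}, replacing ``indecomposable summand'' throughout by ``minimal Nakayama-stable summand'' and using the preceding Proposition together with Lemma \ref{nakayama-stable-mutation} to keep all constructions in the tilting (rather than merely silting) world.

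First I would dispose of the equivalence $(a)\Leftrightarrow(b)$ in $(2)$. If $U=\mu_X^+(T)$ for $T=X\oplus M$ with $X$ a minimal Nakayama-stable summand, then, as already recalled in the Remark preceding the theorem, $\mu_X^-\mu_X^+(T)=T$. The cone construction of the new summand $Y$ of $U$ shows that $Y$ is itself minimal Nakayama-stable (its $\nu$-orbit is in bijection with that of $X$ via the mutation triangle), so $T=\mu_Y^-(U)$ is an irreducible right tilting mutation of $U$. The converse direction is symmetric.

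For $(1)$ I would start from $T>U$, apply the previous Proposition with $U_0=U$, and obtain a chain of triangles $U_{i+1}\to T_i\to U_i\to U_{i+1}[1]$ with $T_i\in\mathrm{add}\,T$, $\nu T_i=T_i$ and $\nu U_i=U_i$. Since $T\not\cong U$, there is an indecomposable summand $X_0$ of $T$ which is not a summand of $U$; by Nakayama-stability of $U$, the entire $\nu$-orbit of $X_0$ avoids $U$. Taking $X$ to be a minimal Nakayama-stable summand of $T$ lying inside this orbit, writing $T=X\oplus M$ and setting $P:=\mu_X^+(T)$, Lemma \ref{nakayama-stable-mutation} guarantees that $P$ is Nakayama-stable, hence tilting. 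Using the defining triangle $X\to X'\to Y\to X[1]$ of $Y$ and the vanishing $\mathrm{Hom}_{\mathcal{T}}(T,U[i])=0$ for $i>0$, a long exact sequence argument exactly as in \cite[Theorem 2.35]{AI} yields $\mathrm{Hom}_{\mathcal{T}}(Y,U[i])=0$ for all $i>0$; combined with $\mathrm{Hom}_{\mathcal{T}}(M,U[i])=0$, this gives $P\geq U$, while strictness $T>P$ is built into the definition of mutation.

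The implication $(a)\Rightarrow(c)$ in $(2)$ then follows from the same triangle by computing Hom groups in $\mathcal{T}$ explicitly: any hypothetical tilting $P'$ with $T>P'>U$ would have to differ from $T$ in a Nakayama-stable summand strictly smaller than $X$, contradicting minimality. This parallels \cite[Proposition 2.36]{AI}. For $(c)\Rightarrow(a)$, apply $(1)$ to produce an irreducible left tilting mutation $P$ of $T$ with $T>P\geq U$; since no tilting object sits strictly between $T$ and $U$, necessarily $P=U$. The main obstacle will be the verification in $(1)$ that replacing a single indecomposable summand by its full Nakayama orbit remains compatible with the approximation triangles, i.e. that the minimal left $\mathrm{add}\,M$-approximation of the whole orbit is still a Nakayama-stable morphism; this is precisely the content of Lemma \ref{nakayama-stable-mutation}, and once it is in place the remaining arguments descend verbatim from the silting analogues in \cite{AI,Aihara2012}.
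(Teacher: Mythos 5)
Your overall strategy---rerunning the arguments of \cite[Theorem 2.35, Prop.\ 2.36]{AI} and \cite[Prop.\ 2.12]{Aihara2012} with ``indecomposable summand'' replaced by ``minimal Nakayama-stable summand'', supported by Lemma \ref{nakayama-stable-mutation} and the Nakayama-stable chain of approximation triangles---is exactly the paper's strategy; the paper's proof is essentially just this citation. However, your proof of (1) has a genuine gap at the one point where care is needed: the choice of the summand at which to mutate. You take $X$ to be the $\nu$-orbit of an arbitrary indecomposable summand $X_0$ of $T$ that is not a direct summand of $U$. The paper, following Aihara, instead takes a $\nu$-stable summand of $T_\ell$, the last term of the chain of right $\mathrm{add}(T)$-approximation triangles for $U$; this choice is what makes the argument work, and it is not interchangeable with yours.

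Concretely, writing $T=X\oplus M$ and $X\to X'\to Y\to X[1]$ for the mutation triangle, the long exact sequence gives $Hom_{\mathcal{T}}(Y,U[i])=0$ for $i\geq 2$ for free, but for $i=1$ it gives $Hom_{\mathcal{T}}(Y,U[1])=0$ only if every morphism $X\to U$ factors through $\mathrm{add}(M)$, i.e.\ if $Hom_{\mathcal{T}}(X',U)\to Hom_{\mathcal{T}}(X,U)$ is surjective. The hypothesis ``$X_0$ is not a summand of $U$'' does not yield this factorization: what matters is the position of $X$ relative to the approximation chain $T_0,\dots,T_\ell$, not whether $X$ occurs in $U$. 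Indeed, after a few mutations $T$ and $U$ typically share no indecomposable summands at all, so your condition imposes no constraint on $X_0$, whereas $T$ generally has several distinct irreducible left tilting mutations of which only some satisfy $P\geq U$; picking the wrong orbit breaks $P\geq U$ precisely at $i=1$. You need to take $X$ inside $T_\ell$ and argue, as in \cite[Prop.\ 2.12]{Aihara2012}, that morphisms from such an $X$ to $U$ factor through $\mathrm{add}(M)$, using that the maps $g_i$ lie in the radical. Your treatment of (2) follows \cite[Theorem 2.35]{AI} in outline, as does the paper's, but your argument for (a)$\Rightarrow$(c) should be routed through part (1) (given $T>P>U$, produce an irreducible left tilting mutation $Q$ of $T$ with $T>Q\geq P>U$ and compare $Q$ with $U$) rather than through the asserted ``strictly smaller Nakayama-stable summand'': an intermediate tilting object $P$ need not a priori be a mutation of $T$ at all.
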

\begin{proof}
Proof of (1) is the same as the proof of \cite[Prop 2.12]{Aihara2012}, except that now we take a $\nu$-stable summand of
$T_\ell$ instead of an indecomposable summand.  Proof of (2) is the
same as the proof of \cite[Theorem 2.35]{AI}, without any change.
\end{proof}

We modify the proof of Aihara in \cite{Aihara2012} to show that any
tilting object of an RFS algebra can be obtained through iterative
irreducible tilting mutation.

The proof of Theorem \ref{RFS-tilting-connected} is based on the
following key proposition:
\begin{Prop}{\cite[Prop 5.1]{Aihara2012}}
$\mathcal{T}$ is tilting-connected if, for any algebra $B$ derived
equivalent to $A$, the following conditions are satisfied:
\begin{description}
\item[(A1)]  Let $T$ be a basic tilting object in $K^b(\mathrm{proj}{B})$ with
$B[-1]\geq T\geq B$.  Then $T$ is connected to both $B[-1]$ and $B$.
\item[(A2)]  Let $P$ be a basic tilting object in $K^b(\mathrm{proj}{B})$ with
$B[-\ell]\geq P\geq B$ for a positive integer $\ell$.  Then there
exists a basic tilting object $T$ in $K^b(\mathrm{proj}{B})$
satisfying $B[-1]\geq T \geq B$ such that $T[-\ell +1]\geq P\geq T$.
\end{description}
\end{Prop}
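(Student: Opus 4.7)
The plan is to follow Aihara's strategy and reduce the whole statement to a single inductive claim: for every algebra $B$ derived equivalent to $A$, every integer $\ell\geq 1$, and every basic tilting object $P$ of $K^b(\mathrm{proj}\,B)$ satisfying $B[-\ell]\geq P\geq B$, the object $P$ is connected to $B$ by iterated irreducible tilting mutations. Once this claim is in hand, I will deduce the proposition as follows. For any pair of basic tilting objects $T_1,T_2$ of $\mathcal{T}$, choose an integer $N$ large enough that $A[-N]\geq T_i$ for both $i=1,2$; this is possible because $T_1$ and $T_2$ are bounded complexes of projectives, so $\mathrm{Hom}_{\mathcal{T}}(A,T_i[j])=H^j(T_i)$ vanishes for $j$ sufficiently large. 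Under the standard derived equivalence $F_i\colon K^b(\mathrm{proj}\,B_i)\to\mathcal{T}$ sending $B_i=\mathrm{End}_{\mathcal{T}}(T_i)^{\mathrm{op}}$ to $T_i$, the object $A[-N]$ pulls back to a basic tilting object sandwiched between $B_i$ and some shift $B_i[-\ell_i]$, so the claim implies that $A[-N]$ is connected to each $T_i$ and hence $T_1$ is connected to $T_2$ by transitivity.

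I would prove the claim by induction on $\ell$. The base case $\ell=1$ is exactly hypothesis (A1). For the inductive step $\ell\geq 2$, hypothesis (A2) produces a basic tilting object $T$ with $B[-1]\geq T\geq B$ satisfying $T[-\ell+1]\geq P\geq T$. Setting $C=\mathrm{End}_{K^b(\mathrm{proj}\,B)}(T)^{\mathrm{op}}$, I obtain a standard derived equivalence $G\colon K^b(\mathrm{proj}\,C)\to K^b(\mathrm{proj}\,B)$ sending $C$ to $T$, and the preimage $G^{-1}(P)$ is a basic tilting object satisfying $C[-\ell+1]\geq G^{-1}(P)\geq C$. By the inductive hypothesis, $G^{-1}(P)$ is connected to $C$ in $K^b(\mathrm{proj}\,C)$; pushing this chain forward through $G$ shows that $P$ is connected to $T$ in $K^b(\mathrm{proj}\,B)$. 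Combined with (A1), which gives that $T$ is connected to $B$, we conclude that $P$ is connected to $B$.

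The main obstacle I anticipate is the bookkeeping required to ensure that the standard derived equivalences used transport the notion of irreducible tilting mutation correctly. Concretely, I need $G$ to intertwine the Nakayama functors of $B$ and $C$, to preserve minimality of Nakayama-stable summands, and to preserve the approximation triangles defining tilting mutations. All three follow from Rickard's theorem on compatibility of standard derived equivalences with Nakayama functors (recalled in Section 2 of the paper), together with the general fact that a triangulated equivalence preserves minimal left and right approximations with respect to any full subcategory. The partial order $\geq$ is transported by any triangulated equivalence, so the sandwich conditions $B[-\ell]\geq P\geq B$ and $C[-\ell+1]\geq G^{-1}(P)\geq C$ translate seamlessly between the two homotopy categories, making the induction go through.
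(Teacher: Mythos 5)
The paper does not actually prove this proposition: it is imported, minus Aihara's condition (A3), directly from \cite[Prop.\ 5.1]{Aihara2012}, so there is no internal proof to compare against. Your argument is a correct reconstruction of Aihara's proof of that result, transposed to the Nakayama-stable (tilting) setting: induction on $\ell$ with (A1) as the base case, (A2) producing the intermediate tilting object $T$ with $B[-1]\geq T\geq B$, and a standard derived equivalence onto $\mathrm{End}(T)^{\mathrm{op}}$ to lower $\ell$ by one, followed by the reduction of the general connectedness statement to the sandwiched case via a common comparison object $A[-N]$. Two small points deserve to be made explicit. First, in the reduction you only verify the lower bound $A[-N]\geq T_i$; the upper bound $T_i[-\ell_i]\geq A[-N]$, which is needed for the pullback of $A[-N]$ to be genuinely sandwiched between $B_i$ and $B_i[-\ell_i]$, requires the (equally routine) observation that $\mathrm{Hom}_{\mathcal{T}}(T_i,A[m])$ vanishes for $m\gg 0$ because $T_i$ is a bounded complex of projectives. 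Second, the transport of \emph{irreducible tilting} mutations, as opposed to irreducible silting mutations, across the equivalences $F_i$ and $G$ is exactly where the Nakayama-stable refinement lives: one needs the equivalences to be standard, the endomorphism algebras to be self-injective (both guaranteed by Rickard's results quoted in Section 2), and the intertwining $G\nu_C\simeq\nu_B G$ so that minimal Nakayama-stable summands and their approximation triangles correspond; you name precisely these ingredients, so the induction goes through.
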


Since we are only interested in tilting-connectedness rather than
silting-connectedness, the original condition (A3), which says that
any silting object is connected to a tilting object, is discarded.

(A2) is known to be true from \cite[Lemma 5.4]{Aihara2012}.
Therefore, what is left is to look carefully at the arguments and
results that are used by Aihara in the proof of (A1).

\begin{Lem}{\cite[Lemma 5.3]{Aihara2012}}
Condition (A1) holds for all RFS algebras $A$.
\end{Lem}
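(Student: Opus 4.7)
The plan is to adapt Aihara's argument from the symmetric case in \cite{Aihara2012} to the RFS setting, with all tilting mutations taken in the Nakayama-stable sense introduced earlier. First I would observe that if $A$ is RFS, then so is any algebra $B$ derived equivalent to $A$: derived equivalence preserves self-injectivity by Rickard, and it preserves representation type for self-injective algebras by Asashiba (Theorem \ref{dclassRFS}). Hence it suffices to fix an RFS algebra $B$ and work inside the interval $[B, B[-1]]$ of tilting objects of $\mathcal{T} = K^b(\mathrm{proj}\,B)$.

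Next, given $T$ with $B[-1] \geq T \geq B$, I would apply the Nakayama-stable analogue of the Bongartz-type construction, namely Theorem \ref{thm-tiltposet}(1), iteratively. Starting from $T$, if $T > B$, then there exists an irreducible (Nakayama-stable) left tilting mutation $P_1$ of $T$ with $T > P_1 \geq B$. Repeating, we build a chain $T > P_1 > P_2 > \cdots \geq B$ of irreducible left tilting mutations. Dually, using irreducible right tilting mutations, we build an ascending chain from $T$ towards $B[-1]$. By Theorem \ref{thm-tiltposet}(2), any strict inequality in the poset of tilting objects that admits no intermediate tilting object is itself an irreducible tilting mutation, so each step in these chains is a single arrow in the tilting quiver.

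The main obstacle, and the place where representation-finiteness enters decisively, is termination of these chains, i.e.\ finiteness of the interval $[B, B[-1]]$ in the poset of basic tilting objects. For an RFS algebra, every tilting object in this interval corresponds, via the bijection with Nakayama-stable simple-minded collections from \cite{KY} combined with our Theorem \ref{RFS-sms-smc-bijec}, to a Nakayama-stable smc lifting an sms of $B$. But the sms's of $B$ are in bijection with configurations of the stable AR-quiver ${}_s\Gamma_B$ (Theorem \ref{sms-configuration}), and there are only finitely many such configurations since ${}_s\Gamma_B$ is finite. Combined with the boundedness in cohomological degree forced by $B[-1]\geq T \geq B$, this yields only finitely many candidates for $T$, so both chains must terminate --- necessarily at $B$ and $B[-1]$ respectively, since Theorem \ref{thm-tiltposet}(1) applied at each end forces any proper descent (resp.\ ascent) to continue. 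Thus $T$ is connected to both $B$ and $B[-1]$, establishing (A1).
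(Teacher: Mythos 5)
Your overall architecture matches the paper's: both proofs reduce (A1) to the statement that, inside the interval of basic tilting objects $P$ with $B[-1]\geq P\geq B$, iterated irreducible (Nakayama-stable) tilting mutations produced by Theorem \ref{thm-tiltposet}(1) must reach the endpoints, because the interval is finite. The paper does this by following Aihara's original proof of \cite[Lemma 5.3]{Aihara2012} verbatim: it invokes \cite[Prop 2.9]{Aihara2012} (which holds for arbitrary algebras and identifies the objects of the interval with two-term complexes, whence finiteness of the interval follows from representation-finiteness) and then re-proves the Nakayama-stable analogue of \cite[Theorem 3.5]{Aihara2012} as Theorem \ref{thm-tiltconnected}. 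Your chain-building step is exactly Theorem \ref{thm-tiltconnected}, and your reduction to a fixed derived-equivalent RFS algebra $B$ is fine.

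Where you genuinely diverge is in the justification of finiteness of the interval, and this is where your argument has a gap. You pass from tilting objects to Nakayama-stable smc's (the bijection of \cite{KY}), then to sms's via $\eta_B$, then to configurations, and conclude finiteness from the finiteness of $\mathrm{Conf}({}_s\Gamma_B)$. But the map $\eta_B$ from Nakayama-stable smc's to sms's is far from injective (already $\{ \Omega S_1[1], S_2,\dots,S_n\}$ and $\{S_1,\dots,S_n\}$ have the same image), so finiteness of the set of sms's does not by itself bound the number of tilting objects in the interval; you need the fibres of this composite map, restricted to the interval, to be finite. Your phrase ``combined with the boundedness in cohomological degree forced by $B[-1]\geq T\geq B$'' is exactly where this work has to happen, but no argument is given, and making it precise essentially amounts to re-deriving the two-term statement of \cite[Prop 2.9]{Aihara2012}. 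The paper avoids this detour entirely by quoting that proposition, for which finiteness of the interval over a representation-finite algebra is immediate. So: same skeleton, but your replacement for the finiteness input is incomplete as stated, and the most economical repair is to revert to Aihara's two-term-complex argument rather than routing through configurations.
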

\begin{proof}
The original proof relies on \cite[Prop 2.9]{Aihara2012} and
\cite[Theorem 3.5]{Aihara2012}.  Proposition 2.9 is true regardless
of what kind of algebra $A$ is.  We are left to show the analogue of
\cite[Theorem 3.5]{Aihara2012} is true, i.e. the
following:
\end{proof}

\begin{Thm}{\cite[Theorem 3.5]{Aihara2012}}\label{thm-tiltconnected}
Let $T, U$ be basic tilting objects in $\mathcal{T}$ with $T\geq U$.
If there exist only finitely many tilting objects $P$ such that
$T\geq P\geq U$, then $U$ is left-connected to $T$.
\end{Thm}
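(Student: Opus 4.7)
The plan is to prove this by strong induction on the cardinality of the finite set
\[
\mathcal{I}(T,U) := \{P \in \mathcal{T} \text{ basic tilting} : T \geq P \geq U\},
\]
which by hypothesis is finite and by reflexivity of $\geq$ contains both $T$ and $U$.

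For the base case $|\mathcal{I}(T,U)| = 1$ we must have $T = U$, so $U$ is trivially left-connected to $T$ via the empty chain. For the inductive step, assume $|\mathcal{I}(T,U)| \geq 2$, which forces $T > U$ strictly. Invoke Theorem \ref{thm-tiltposet}(1) to produce an irreducible left tilting mutation $P$ of $T$ with $T > P \geq U$. By transitivity of $\geq$, every basic tilting $Q$ satisfying $P \geq Q \geq U$ also lies in $\mathcal{I}(T,U)$, so $\mathcal{I}(P,U) \subseteq \mathcal{I}(T,U)$. The inclusion is strict because $T \notin \mathcal{I}(P,U)$: if one had $P \geq T$, antisymmetry of $\geq$ on isomorphism classes of basic silting objects (cf.~\cite{AI}) would force $P = T$, contradicting $T > P$. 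The inductive hypothesis, applied to the pair $(P,U)$, then yields a chain
\[
P = P_0 \to P_1 \to \cdots \to P_k = U
\]
of irreducible left tilting mutations, and prepending the irreducible left tilting mutation $T \to P$ gives the required chain from $T$ to $U$.

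The main obstacle — and indeed the only genuinely non-trivial point — is producing at each stage an irreducible left tilting mutation that remains inside the interval $[U, T]$ with respect to $\geq$. This is exactly the content of Theorem \ref{thm-tiltposet}(1), which is the Nakayama-stable adaptation of the silting-mutation existence result from Aihara--Iyama. A subsidiary point is that this theorem must be invoked in its Nakayama-stable form, so that $P$ is obtained from $T$ by mutating at a \emph{minimal Nakayama-stable} summand (rather than at an indecomposable one); but this is exactly the version established in the preceding subsection. No further ingredients beyond these are needed, so the induction closes immediately.
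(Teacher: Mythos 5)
Your proof is correct and takes essentially the same route as the paper's: both arguments repeatedly invoke Theorem \ref{thm-tiltposet}(1) to produce an irreducible left tilting mutation that descends strictly within the interval $\{P : T\geq P\geq U\}$, and both use the finiteness hypothesis to force termination at $U$. The paper phrases this as an infinite-descent contradiction (an infinite strictly decreasing chain would yield infinitely many tilting objects in the interval) rather than your explicit induction on the cardinality of the interval, but the mathematical content is identical.
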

\begin{proof}
If $U\in \mathrm{add} T$, then we have $U\cong T$.  So suppose
$U\notin\mathrm{add}T$. Theorem \ref{thm-tiltposet} provides a
sequence:
\[
T=T_0 > T_1 > T_2 >\cdots
\]
such that each $T_{i+1}$ is an irreducible left tilting mutation of
$T_i$, and $T_i\geq U$, for all $i\geq 0$.  If $U$ is not
left-connected to $T$, then this sequence is infinitely long,
contradicting the condition that there are only finitely many
tilting objects $P$ with $T\geq P\geq U$.  Therefore, $U$ is
isomorphic to $T_i$ for some $i\geq 0$.
\end{proof}
\bigskip

\end{document}